   \def\MR#1{}
\theoremstyle{plain}
\newtheorem{thm}{Theorem}[section]		
\newtheorem{prop}[thm]{Proposition}
\newtheorem{cor}[thm]{Corollary}
\newtheorem{lem}[thm]{Lemma}
\theoremstyle{definition}
\newtheorem{df}{Definition}[section]
\newtheorem{ques}[thm]{Question}
\theoremstyle{remark}
\newtheorem{rmk}{Remark}[section]
\newtheorem*{ac}{Acknowledgements}
\newcommand{\zz}{\mathbb{Z}}
\newcommand{\qq}{\mathbb{Q}}
\newcommand{\rr}{\mathbb{R}}
\DeclareMathOperator{\card}{Card}
\DeclareMathOperator{\met}{Met}
\newcommand{\ult}[2]{\mathrm{UMet}(#1; #2)}
\DeclareMathOperator{\umetdis}{\mathcal{UD}}
\DeclareMathOperator{\yodiam}{diam}
\newcommand{\yodisdis}{M}
\newcommand{\yochara}{\mathscr}
\newcommand{\youfin}{\yochara{N}}
\newcommand{\yofin}{\yochara{F}}
\newcommand{\yosub}{\subseteq}
\newcommand{\yoimage}[1]{\mathrm{Im}(#1)}
\newcommand{\yorubdim}[1]{\overline{\dim}_{\mathrm{B}}(#1)}
\newcommand{\yorhdim}[1]{\dim_{\mathrm{H}}(#1)}
\newcommand{\yorpdim}[1]{\dim_{\mathrm{P}}(#1)}
\newcommand{\yoradim}[1]{\dim_{\mathrm{A}}(#1)}
\newcommand{\yordimsetu}[3]{\mathrm{UFD}(#1; #2; #3)}
\DeclareMathOperator{\ubdim}{\overline{\dim}_{B}}
\DeclareMathOperator{\hdim}{\dim_{H}}
\DeclareMathOperator{\pdim}{\dim_{P}}
\DeclareMathOperator{\adim}{\dim_{A}}
\newcommand{\yorlset}{\mathcal{A}}
\newcommand{\yorzeron}{\yorbd{n}_{0}}
\newcommand{\yorbd}[1]{\mathbf{#1}}
\newcommand{\yonbsp}[1]{N(#1)}
\newcommand{\yocantorc}{\Gamma}
\newcommand{\yocmet}[2]{\mathrm{Cpu}(#1, #2)}
\newcommand{\yomapsp}[2]{\mathrm{C}(#1, #2)}
\newcommand{\yomaps}[2]{\mathrm{C}_{0}(#1, #2)}
\newcommand{\yomapsb}[2]{\mathrm{BMap}(#1, #2)}
\newcommand{\yobmet}{\mathcal{SM}}
\newcommand{\yomaindis}{\triangledown}
\newcommand{\yosph}{\mathbb{S}}
\newcommand{\yoweight}[1]{w(#1)}
\newcommand{\yoseqsp}[1]{\mathrm{Seq}(#1)}
\newcommand{\yoquin}{\mathfrak{A}}
\newcommand{\yoopsp}{\mathbb{O}}
\newcommand{\yosind}{\mathrm{ind}}
\newcommand{\yolind}{\mathrm{Ind}}
\newcommand{\yocdim}{\mathrm{dim}}
\newcommand{\yoclosym}{\mathfrak{c}}
\newcommand{\yocutsp}[3]{[#1]_{\yoclosym(#2, #3)}}
\newcommand{\yocutmem}[3]{[#1]_{\yoclosym(#2, #3)}}
\newcommand{\yocutdis}[2]{[#1]_{\yoclosym(#2)}}
\begin{document}

\title[Constructions]
{
Constructions of 
Urysohn universal ultrametric
 spaces
}
\author[Yoshito Ishiki]
{Yoshito Ishiki}

\address[Yoshito Ishiki]
{\endgraf
Photonics Control Technology Team
\endgraf
RIKEN Center for Advanced Photonics
\endgraf
2-1 Hirasawa, Wako, Saitama 351-0198, Japan}

\email{yoshito.ishiki@riken.jp}

\date{\today}
\subjclass[2020]{Primary 54E35, 
Secondary 
51F99}
\keywords{Urysohn universal ultrametric space}

\begin{abstract}
In this paper, 
we give 
 new constructions of Urysohn universal ultrametric spaces. 
We first characterize 
a Urysohn universal ultrametric subspace 
 of the space of all 
  continuous functions whose images contain the zero, 
   from 
a zero-dimensional compact Hausdorff space 
without isolated points into the space of non-negative  real numbers equipped with the nearly discrete topology. 
As a consequence, 
the whole function  space is Urysohn universal, 
which  can be considered as 
a 
 non-Archimedean analog of 
 Banach--Mazur theorem. 
As a more application, we prove that 
the space of all continuous pseudo-ultrametrics on a 
zero-dimensional compact Hausdorff space
with an accumulation point  is a 
Urysohn universal ultrametric space. 
This result  can be considered as a variant  of 
 Wan's construction of Urysohn universal ultrametric space via the Gromov--Hausdorff ultrametric space. 
\end{abstract}

\maketitle
\section{Introduction}\label{sec:intro}
For a   class $\yochara{C}$
of metric spaces, 
a metric space $(X, d)$ is said to be 
\emph{$\yochara{C}$-injective} if 
for all $(A, a)$ and $(B, b)$ in $\yochara{C}$ and 
for all isometric embeddings 
$\phi\colon (A, a)\to (B, b)$ and 
$\psi\colon (A, a)\to (X, d)$, 
there exists an isometric embedding 
$\theta\colon (B, b)\to (X, d)$ such that 
$\theta\circ \phi=\psi$. 
This definition roughly coincides with  the notion of 
injective objects in category theory. 
We denote by $\yofin$
the class of all finite metric spaces. 
Urysohn \cite{Ury1927} constructed 
a complete separable $\yofin$-injective 
metric space $\mathbb{U}$, 
which is 
 nowadays called  
the \emph{Urysohn universal 
metric space}. 
In this paper, we investigate an
ultrametric analogue of $\mathbb{U}$. 
For more information of $\mathbb{U}$ and related spaces, 
we refer the readers to, for instance,  \cite{MR2667917},
\cite{MR2435145}, 
\cite{MR952617}, 
\cite{MR2435148}, 
\cite{MR2051102}, 
\cite{MR1900705}
and 
\cite{MR2277969}. 
In the recent years, 
the theory of Urysohn type metric spaces has 
applications to logic and  model theory 
(see for example, \cite{MR3583613}
and \cite{MR3413493}). 
Indeed, the space $\mathbb{U}$ can be 
regarded as a generalization of
 the random graph in model theory
(see also  \cite[Subsection 3.3]{MR2667917}).

A metric $d$ on a set $X$ is said to be 
an \emph{ultrametric} if 
for all $x, y, z\in X$, it satisfies the 
\emph{strong triangle inequality}
$d(x, y)\le d(x, z)\lor d(z, y)$, 
where $\lor$ stands for the maximum operator on $\rr$. 
If a pseudo-metric $d$ satisfies the strong triangle inequality, 
then $d$ is called a \emph{pseudo-ultrametric}.

A set $R$ is said to be a \emph{range set} if 
it is a subset of $[0, \infty)$ 
and $0\in R$. 
If $R$ is a range set, and  an ultrametric $d$ (resp.~pseudo-ultrametric) on a 
set $X$ satisfies $d(x, y)\in R$ for all $x, y\in R$, 
then we 
call $d$ an  \emph{$R$-ultrametric} or 
\emph{$R$-valued ultrametrics} 
(resp.~\emph{$R$-pseudo-ultrametric} or 
\emph{$R$-valued pseudo-ultrametrics}).

For a range set $R$, 
we denote by $\youfin(R)$ the 
class of all 
finite $R$-valued 
ultrametric spaces. 
The main subject of this paper is 
to provide
new
constructions of 
 complete $\youfin(R)$-injective $R$-ultrametric spaces for every  range set $R$.

In the title and the abstract of this paper, 
 the term ``the Urysohn universal ultrametric spaces''
  means  injective ultrametric spaces. 
However, 
from now on, 
for a range set $R$, 
we use the term ``the $R$-Urysohn universal ultrametric spaces''
 as 
 the  separable 
$\youfin(R)$-injective complete 
$R$-ultrametric space. 
Note that such an space is unique up to isometry
and 
note that if $R$ is uncountable, 
a $\youfin(R)$-injective $R$-ultrametric space is non-separable
(see for example, 
\cite[Section 2]{MR2754373} and 
\cite[(12) in Theorem 1.6]{MR3782290}).

There are several construction of 
$\youfin(R)$-injective $R$-valued 
ultrametric spaces. 
Similarly to the ordinary Urysohn universal metric space
(see \cite{Ury1927} and \cite{MR952617}), 
$\youfin(R)$-injective $R$-ultrametric spaces
can be obtained by 
the Urysohn amalgamation method  
(the Fra\"{i}ss\'{e} limit)
(see \cite{MR1843595}), 
and 
the way using Kat\'{e}tov function spaces
(see \cite{MR2754373}). 
It is also  known that we can construct  $\youfin(R)$-injective $R$-ultrametric spaces as the spaces of  
branches (rays) of trees
(see \cite{MR2754373} and  \cite{MR2667917})). 
Wan \cite{MR4282005} 
proved
the $\youfin(R)$-injectivity
of 
the non-Archimedean 
Gromov--Hausdorff space; namely, 
the space of all isometry classes of  compact $R$-valued 
ultrametric spaces equipped with the Gromov--Hausdorff.

Before explaining our constructions, 
we prepare some concepts. 
For a range set $R$, 
we 
define an ultrametric 
$\yodisdis_{R}$ on $R$ by 
\[
\yodisdis_{R}(x, y)=
\begin{cases}
x\lor y & \text{if $x\neq y$}\\
0  & \text{if $x=y$}. 
\end{cases}
\]
Then the metric 
$\yodisdis_{R}$ is an ultrametric on $R$. 
This construction  was given by  
Delhomm\'{e}--Laflamme--Pouzet--Sauer \cite[Proposition 2]{MR2435142}, 
and it also  can be found in  \cite{MR2854677}, 
\cite{Ishiki2021ultra}, and \cite{Ishiki2022highpower}. 
Based on  the notion of \cite{MR350705}, 
we call  $\yodisdis_{R}$ (resp.~the topology generated by $\yodisdis_{R}$)
 the  \emph{nearly discrete (ultra)metric on $R$}
(resp.~the \emph{nearly discrete topology}). 
The space $(R, \yodisdis_{R})$ is 
as significant  for ultrametric spaces as 
the space $[0, \infty)$ or $\rr$ with the  Euclidean topology
in the theory of 
usual metric spaces. 
There are 
 two key points of  this paper. 
 The first is the recognition of the importance of  $(R, \yodisdis_{R})$. 
 The second is the discovery of  the tenuous sets
 as the compact subspaces of 
$(R, \yodisdis_{R})$ (for the definition, 
see Subsection \ref{subsec:tenuous}).

For a 
 topological space $X$ and
a range set  $R$, we 
denote by $\yomapsp{X}{R}$
the set of all continuous maps from 
$X$ to the nearly discrete space $R$. 
We also denote by
$\yomaps{X}{R}$ the set of all 
$f\in \yomapsp{X}{R}$ such that $0\in f(X)$. 
For $f, g\in \yomapsp{X}{R}$, 
we define $\yomaindis(f, g)$ by 
the infimum of all 
$\epsilon \in (0, \infty)$ such that 
$f(x)\le g(x)\lor \epsilon$ and 
$g(x)\le f(x)\lor \epsilon$ for all 
$x\in X$. 
By abuse of notations, we use the same symbol $\yomaindis$
as the restricted function 
$\yomaindis|_{\yomaps{X}{R}^{2}}$. 
Then the function 
$\yomaindis$ is 
an ultrametric on 
$\yomaps{X}{R}$ taking values in $[0, \infty]$. 
In this paper, 
since we only consider the case where $X$ is compact, 
the function $\yomaindis$ is actually an ultrametric in 
the ordinary sense.

In Theorem \ref{thm:favoid} in  this paper, 
for a subset $E$ of $\yomaps{X}{R}$, 
we shall  give a sufficient and 
necessary condition for the 
$\youfin(R)$-injectivity of 
$E$. 
As a consequence, we conclude that 
the space $(\yomaps{X}{R}, \yomaindis)$ itself  is 
$\youfin(R)$-injective
(see Theorem \ref{thm:stand}), 
which is our first construction. 
Since all separable $R$-valued ultrametric spaces 
are isometrically embeddable into 
a complete $\youfin(R)$-injective $R$-ultrametric 
space (see for example, \cite[Proposition 2.7]{MR2754373}), 
Theorem \ref{thm:stand} can be regarded 
 as 
an analog of the Banach--Mazur theorem 
stating that all separable metric spaces can be 
isometrically embedded into 
the space of all  functions on 
$[0, 1]$ equipped with the supremum metric. 
We can also consider 
the construction of 
$(\yomaps{X}{R}, \yomaindis)$ 
is a 
generalization of 
Vestfrid's construction 
in \cite{MR1354831}, 
which explains that 
the space of all decreasing sequences  in $[0, \infty)$ 
covergent to $0$
is $\youfin(R)$-injective.

For a topological space $X$, 
and a range set $R$, 
we denote by 
$\yocmet{X}{R}$ the 
set of all 
continuous $R$-valued  pseudo-ultrametrics $d\colon X\times X\to R$ on $X$, where $X\times X$ and $R$ are equipped with the 
product topology and the Euclidean topology, respectively (see Proposition \ref{prop:spectral}). 
For $d, e\in \yocmet{X}{R}$, 
we define $\umetdis_{X}^{R}(d, e)$ the 
infimum of all $\epsilon \in R$ such that 
$d(x, y)\le e(x, y)\lor \epsilon$ and 
$e(x, y)\le d(x, y)\lor \epsilon$ for all $x, y\in X$. 
The metric $\umetdis_{X}^{R}$ is 
a restricted metric of $\yomaindis$ and 
is an extension of ultrametrics on spaces of 
ultrametrics defined in the author's papers 
\cite{Ishiki2021ultra} and \cite{Ishiki2022highpower}. 
As in the case of $\yomaps{X}{R}$, 
we focus only on the case where $X$ is compact, 
the value $\umetdis_{X}^{R}(d, e)$ is 
always finite.

As an application of  Theorem \ref{thm:favoid}, 
we will prove  Theorem \ref{thm:contimetuniv}
asserting  that 
if $X$ is a compact Hausdorff space 
possessing an accumulation point, and 
$R$ is a range set,  
then the space 
$(\yocmet{X}{R}, \umetdis_{X}^{R})$ is 
$\youfin(R)$-injective and complete. 
This is our second construction and 
it  can be considered as a variant of 
 Wan's construction of Urysohn universal ultrametric space via the Gromov--Hausdorff ultrametric space
 \cite{MR4282005}. 
We also show that several subsets of
$\yocmet{X}{R}$ are 
$\youfin(R)$-injective
(see Theorem \ref{thm:manyinj}). 
For example, 
the set of all doubling ultrametrics is 
injective. 
To obtain this result, 
we use the author's results on  dense
subsets of spaces of ultrametrics
in 
\cite{Ishiki2021ultra} and
\cite{Ishiki2021dense}, 
and use the extension theorem of ultrametrics in 
\cite{Ishiki2022factor}.

The organization of this paper is as follows: 
Section \ref{sec:pre}
presents 
 some preliminaries. 
We introduce tenuous subsets of $[0, \infty)$, which plays an important role in this paper. 
We also investigate basic properties of 
$(\yomaps{X}{R}, \yomaindis)$, 
$(\yocmet{X}{R}, \umetdis_{X}^{R})$, 
and $\youfin(R)$-injective 
ultrametrics spaces. 
In Section \ref{sec:funcsp}, 
we explain our first construction 
(see Theorem \ref{thm:stand}). 
We also give a sufficient and necessary condition 
for the property that $X$ has no isolated points 
using the $\youfin(R)$-injectivity of 
$\yomaps{X}{R}$. 
Section \ref{sec:ultrametrics} is intended to 
 show that if 
$X$ is a compact Hausdorff space  possessing 
an accumulation point, then  $(\yocmet{X}{R}, \umetdis_{X}^{R})$ is 
$\youfin(R)$-injective
(see Theorem \ref{thm:contimetuniv}). 
This is our second construction. 
In Section \ref{sec:add}, 
we determine the topological type  of 
$\yomaps{X}{R}$ or $\yocmet{X}{R}$.

\section{Preliminaries}\label{sec:pre}
In this section, 
we prepare some basic  statements on ultrametrics, 
spaces of ultrametrics, and injective metric spaces.

\subsection{Generalities}\label{subsec:gen}
For a set $E$, 
the symbol  $\card(E)$ stands for the 
cardinality of $E$. 
For a metric space 
$(X, d)$, $a\in X$,
and $r\in (0, \infty)$, 
we denote by 
$B(a, r; d)$ (resp.~$U(a, r; d)$)
the closed ball (resp.~open ball) centered at $a$ 
with radius $r$. 
We also define $\yosph(a, r; d)
=\{\, x\in X\mid d(x, a)=r\, \}$. 
We often simply represent them as   
$B(a, r)$, $U(a, r)$,and $\yosph(a, r)$. 
In this paper, 
for a metric space $(X, d)$, and a subset $A$ of $X$, 
we sometimes represent   the restricted metric 
$d|_{A^{2}}$ as the same symbol to the ambient metric $d$
when no confusions can arise.

\subsubsection{Dimension}\label{subsec:gen}
A subset of a topological space is 
\emph{clopen} if it is closed and open in the 
ambient space. 
We define the zero-dimensionality of topological spaces in 
three ways. 
Let $X$ be a non-empty topological space. 
We write 
$\yosind(X)=0$ if 
$X$ has an open base consisting of clopen subsets of $X$. 
We also write 
$\yolind(X)=0$ if 
for all two disjoint closed subsets $A$ and $B$ of $X$, 
there exists a clopen subset $C$ such that 
$A\yosub C$ and $B\cap C=\emptyset$. 
The notation   $\yocdim(X)=0$ means that 
every finite open covering of $X$ has a 
refinement covering of $X$ consisting of 
finitely many mutually disjoint open subsets of $X$. 
The dimensions
$\yosind(X)$, 
$\yolind(X)$, 
and 
$\yocdim(X)$ 
are called the \emph{small inductive dimension}, 
the \emph{large inductive dimension},
and 
the \emph{covering dimension}, 
respectively. 
Of cause, we can define higher dimensional spaces, 
however, we omit it since we focus only on 
the $0$-dimensionality in this paper. 
In general,  these three dimensions can 
different from each other. 
However, as stated in  the 
next proposition, 
they are all  equal to $0$ 
if $X$ is compact Hausdorff 
and any  one of the three is $0$. 
The proof is  deduced from 
\cite[Corollary 2.2, Proposition 2.3, pp156--157]{MR0394604}. 
\begin{prop}\label{prop:zerodim}
Let $X$ be a non-empty compact Hausdorff space. 
If any one of the three $\yosind(X)$,  $\yolind(X)$, and $\yocdim(X)$ is equal to $0$, 
then 
$\yosind(X)=\yolind(X)=\yocdim(X)=0$ . 
\end{prop}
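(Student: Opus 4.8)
The plan is to show that the three statements $\yosind(X)=0$, $\yolind(X)=0$, and $\yocdim(X)=0$ are equivalent for a non-empty compact Hausdorff space $X$ by closing a cycle of implications, namely $\yolind(X)=0 \To \yocdim(X)=0$, then $\yocdim(X)=0 \To \yosind(X)=0$, and finally $\yosind(X)=0 \To \yolind(X)=0$. Each implication forces the target dimension down to $0$, and since all three invariants are non-negative for a non-empty space, the cycle yields that whenever one vanishes they all do. The only topological input beyond the definitions is that a compact Hausdorff space is normal, hence also regular. I note that the general inequalities $\yosind(X)\le\yolind(X)$ (valid for regular spaces) and $\yocdim(X)\le\yolind(X)$ (valid for normal spaces), recorded in the cited reference, already dispatch the implication $\yolind(X)=0\To\yocdim(X)=0$; so the genuine work is to supply the two elementary implications starting from $\yosind(X)=0$ and from $\yocdim(X)=0$.

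For $\yosind(X)=0 \To \yolind(X)=0$, I would take disjoint closed sets $A$ and $B$ and use normality to separate them by disjoint open sets $U\supseteq A$ and $V\supseteq B$. Since $\yosind(X)=0$, every point of $A$ has a clopen neighborhood contained in $U$; as $A$ is a closed subset of the compact space $X$ it is itself compact, so finitely many such clopen neighborhoods cover $A$. Their union $C$ is clopen, satisfies $A\yosub C\yosub U$, and is therefore disjoint from $B$, which is exactly the separation property defining $\yolind(X)=0$.

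For $\yocdim(X)=0 \To \yosind(X)=0$, I would fix a point $x$ and an open set $U$ containing it and produce a clopen set between them. Using regularity I first choose an open $V$ with $x\in V\yosub\overline{V}\yosub U$, so that $\{U,\, X\setminus\overline{V}\}$ is a finite open cover of $X$. Applying $\yocdim(X)=0$, this cover admits a refinement into finitely many pairwise disjoint open sets; because they are finitely many, mutually disjoint, and cover $X$, each one equals the complement of the union of the others and is hence clopen. The member $W$ containing $x$ refines the cover, so $W\yosub U$ or $W\yosub X\setminus\overline{V}$; the latter is impossible since $x\in\overline{V}\cap W$, whence $W\yosub U$ gives the required clopen neighborhood.

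The step I expect to demand the most care is $\yocdim(X)=0 \To \yosind(X)=0$: one must check that a finite pairwise-disjoint open cover automatically consists of clopen sets and then argue that the piece through $x$ cannot fall into the complement of $\overline{V}$. An alternative route, proving $\yolind(X)=0\To\yocdim(X)=0$ directly, would instead be the crux, since it requires the shrinking lemma and an induction that peels off one clopen piece at a time while verifying that clopen subspaces inherit $\yolind=0$; I would sidestep this by invoking the cited inequality $\yocdim\le\yolind$. All remaining verifications are routine once normality and regularity of compact Hausdorff spaces are used.
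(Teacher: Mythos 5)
Your argument is correct. Note that the paper offers no argument at all for this proposition---it simply defers to Pears' book---so your cycle of implications is strictly more informative than what is printed. The two implications you prove in full are sound: for $\yosind(X)=0 \To \yolind(X)=0$, covering the compact closed set $A$ by finitely many clopen neighbourhoods inside an open set disjoint from $B$ works (you do not even need normality: clopen neighbourhoods of points of $A$ contained in $X\setminus B$ already suffice); for $\yocdim(X)=0 \To \yosind(X)=0$, your observations that a finite pairwise-disjoint open cover automatically consists of clopen sets and that the piece through $x$ cannot lie in $X\setminus\overline{V}$ are both right. The one step you outsource, $\yolind(X)=0\To\yocdim(X)=0$ via the inequality $\yocdim\le\yolind$ for normal spaces, is legitimate (and no worse than the paper's own citation), but it is the only place where you lean on a nontrivial external theorem; in the compact zero-dimensional setting it can be made elementary, closing the cycle with no citation at all. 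Indeed $\yolind(X)=0$ gives $\yosind(X)=0$ immediately because singletons are closed in a Hausdorff space, and then, given a finite open cover $\{U_1,\dots,U_n\}$, compactness yields finitely many clopen sets $C_1,\dots,C_m$ covering $X$ with each $C_k$ contained in some $U_{i_k}$; the sets $D_k=C_k\setminus(C_1\cup\dots\cup C_{k-1})$ are clopen, mutually disjoint, cover $X$, and refine the original cover, so $\yocdim(X)=0$. With that substitution your proof becomes fully self-contained, which is more than the paper provides.
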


Based on  Proposition \ref{prop:zerodim}, 
in what follows, we will  say that a compact Hausdorff space $X$ is \emph{$0$-dimensional}
if any one of the three dimensions is $0$ (in this case, all the three dimensions are $0$). 

\subsubsection{Topological weights}

For a topological space $X$, 
we  denote by 
$\yoweight{X}$ the topological weight of $X$. 
Namely, $\yoweight{X}$ is equal to the 
minimum of cardinals of all open bases of $X$
(remark that some authors call  the cardinal 
$\max\{\yoweight{X}, \aleph_{0}\}$  the 
topological weight). 
Hence a  metrizable space $X$ is separable 
if and only if $\yoweight{X}\le \aleph_{0}$. 
Using compactness, we obtain the 
following lemma:

\begin{lem}\label{lem:22-22}
Let $X$ be a $0$-dimensional 
compact Hausdorff space
such that $\yoweight{X}$ is infinite. 
If $\mathcal{C}$ is the 
set of all clopen subsets of $X$, 
then 
$\card(\mathcal{C})
=\yoweight{X}$. 
\end{lem}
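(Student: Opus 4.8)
The plan is to establish the two inequalities $\card(\mathcal{C})\ge \yoweight{X}$ and $\card(\mathcal{C})\le \yoweight{X}$ separately. Throughout, write $\kappa=\yoweight{X}$, which is infinite by hypothesis. First I would dispose of the easy direction. Since $X$ is $0$-dimensional, by definition $\yosind(X)=0$, so $X$ has an open base consisting of clopen sets; in particular the full family $\mathcal{C}$ of all clopen subsets is itself an open base. Because $\kappa$ is the minimum cardinality of an open base of $X$, this already forces $\card(\mathcal{C})\ge \kappa$.

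For the reverse inequality, the first step is to extract a clopen base whose cardinality is at most $\kappa$. I would fix an arbitrary base $\mathcal{A}$ of $X$ with $\card(\mathcal{A})=\kappa$, and then for each pair $(A_{1},A_{2})\in\mathcal{A}\times\mathcal{A}$ for which there exists a clopen set $C$ with $A_{1}\subseteq C\subseteq A_{2}$, select one such clopen set, denoted $C(A_{1},A_{2})$. Let $\mathcal{D}$ be the collection of all selected sets. Then $\card(\mathcal{D})\le \card(\mathcal{A})^{2}=\kappa$ since $\kappa$ is infinite. That $\mathcal{D}$ is a base follows from a routine three-fold application of the base property combined with $0$-dimensionality: given a point $x$ in an open set $U$, choose $A_{2}\in\mathcal{A}$ with $x\in A_{2}\subseteq U$, then a clopen $C$ with $x\in C\subseteq A_{2}$, then $A_{1}\in\mathcal{A}$ with $x\in A_{1}\subseteq C$; the chain $A_{1}\subseteq C\subseteq A_{2}$ shows that $C(A_{1},A_{2})$ is defined and satisfies $x\in C(A_{1},A_{2})\subseteq U$.

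The crucial step is then to count $\mathcal{C}$ against $\mathcal{D}$ using compactness. I claim every clopen set $C\in\mathcal{C}$ is a \emph{finite} union of members of $\mathcal{D}$: indeed $C$ is closed in the compact space $X$, hence compact, and since $\mathcal{D}$ is a base and $C$ is open, $C$ equals the union of the members of $\mathcal{D}$ that are contained in it, so by compactness finitely many of them already cover $C$. Consequently the map sending a finite subfamily $\{D_{1},\dots,D_{n}\}\subseteq\mathcal{D}$ to $D_{1}\cup\dots\cup D_{n}$ carries the set $[\mathcal{D}]^{<\omega}$ of finite subsets of $\mathcal{D}$ onto $\mathcal{C}$. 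Since $\kappa$ is infinite, $\card\bigl([\mathcal{D}]^{<\omega}\bigr)=\card(\mathcal{D})\le\kappa$, whence $\card(\mathcal{C})\le\kappa$. Together with the first paragraph this yields $\card(\mathcal{C})=\kappa=\yoweight{X}$.

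The step I expect to be the real heart of the argument is this last compactness observation: without it, a clopen set could a priori be an arbitrary (size up to $2^{\kappa}$) union of basic clopen sets, and the count would fail. Compactness is exactly what collapses each clopen set to a finite union and thereby reduces the count of $\mathcal{C}$ to the count of finite subsets of a $\kappa$-sized family. By contrast, the extraction of the clopen base $\mathcal{D}$ of size $\kappa$ is a standard weight computation and should present no difficulty.
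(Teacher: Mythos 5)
Your proof is correct, and it matches the paper's intent: the lemma is stated there without proof, with only the remark that compactness is the key ingredient, and your finite-subcover step (collapsing each clopen set to a finite union of basic clopen sets from a base of size $\yoweight{X}$) is exactly that use of compactness. Both inequalities are handled properly, so nothing is missing.
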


For a metric space
$(X, d)$, 
we say
a subset $A$ of $X$ is 
\emph{$r$-separated}
if 
we have $r\le d(x, y)$
 for all distinct $x, y\in A$.
The following  lemma can be proven 
by the definition of the topological weights. 
\begin{lem}\label{lem:leweight}
Let $(X, d)$ be a metric space. 
If a subset $E$ of $X$ is  $r$-separated,  
then we have $\card(E)\le \yoweight{X}$. 
\end{lem}

The next lemma can be proven 
by Lemma \ref{lem:leweight} with  taking  a maximal $(2^{-n})$-separated set from a dense subset for each $n\in \zz_{\ge 0}$, 
or  by  the fact that  every metric space has a 
$\sigma$-discrete open base. 
\begin{lem}\label{lem:weightle}
Let $(X, d)$ be a metric space. 
If $F$ is a dense subset of $X$, 
then $\yoweight{X}\le \card(F)$. 
\end{lem}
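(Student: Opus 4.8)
The plan is to exhibit an explicit open base of $X$ whose cardinality is at most $\card(F)$; by the very definition of the topological weight this immediately yields $\yoweight{X}\le \card(F)$. First I would set
\[
\mathcal{B}=\{\, U(a, q; d)\mid a\in F,\ q\in \qq\cap(0,\infty)\,\},
\]
namely the family of all open balls centered at points of the dense set $F$ with positive rational radii.

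Next I would verify that $\mathcal{B}$ is an open base of $X$. Given an open set $V\yosub X$ and a point $x\in V$, I would choose $\epsilon\in(0,\infty)$ with $U(x,\epsilon;d)\yosub V$, pick a rational $q$ with $0<q<\epsilon/2$, and use the density of $F$ to select $a\in F$ with $d(x,a)<q$. Then $x\in U(a,q;d)$, and the strong (or ordinary) triangle inequality gives $U(a,q;d)\yosub U(x,\epsilon;d)\yosub V$, so every open set is a union of members of $\mathcal{B}$.

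Finally I would estimate $\card(\mathcal{B})$. If $F$ is infinite, then $\card(\mathcal{B})\le \card(F)\cdot \aleph_{0}=\card(F)$, whence $\yoweight{X}\le\card(F)$. The single point that requires care, and the only real obstacle, is the case where $F$ is finite: there the crude product bound only gives $\aleph_{0}$, so instead I would argue directly that a finite dense subset of a metric space is closed and therefore equals $X$, making $X$ a finite discrete space whose minimal base consists of the singletons, so that $\yoweight{X}=\card(X)=\card(F)$. The remainder of the argument is routine. Equivalently, one may run the proof through a maximal $(2^{-n})$-separated subset of $F$ for each $n\in \zz_{\ge 0}$ together with Lemma \ref{lem:leweight}, or through the existence of a $\sigma$-discrete open base, but the direct construction above seems the most transparent.
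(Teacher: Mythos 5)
Your proof is correct and follows essentially the same route the paper sketches: both produce an explicit open base of cardinality at most $\card(F)\cdot\aleph_{0}$ consisting of balls centered in the dense set (the paper via maximal $(2^{-n})$-separated subsets of $F$, you via all points of $F$ with rational radii). Your separate treatment of the case of finite $F$ is a worthwhile extra precision, since the paper defines $\yoweight{X}$ without taking the maximum with $\aleph_{0}$, so the crude product bound alone would not suffice there.
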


Remark  that Lemmas \ref{lem:leweight} and 
\ref{lem:weightle} are consequences of 
the fact that the weight of a metric space is 
equal to the minimum of cardinals of all 
dense subsets of the space.

\subsubsection{Properties of ultrametrics}
For a range set $R$, and 
for a topological space $X$, 
we denote by the 
$\ult{X}{R}$ the set of all 
$R$-valued ultrametrics that generate the 
same topology of $X$. 
Remark that 
$\ult{X}{R}\yosub 
\yocmet{X}{R}$. 
A topological space $X$ is said to be 
\emph{$R$-valued ultrametrizable} or 
\emph{$R$-ultrametrizable}
if 
$\ult{X}{R}\neq \emptyset$. 
We simply say that $X$ is \emph{ultrametrizable} if 
it is $[0, \infty)$-valued ultrametrizable. 
A range set $R$ is said to be 
\emph{characteristic} if 
for all $t\in [0, \infty)$, 
there exists $r\in R\setminus \{0\}$ such that 
$r\le t$. 
The next lemma gives a characterization of ultrametrizablity, 
and indicates the importance of  characteristic range
sets.  The proof follows from 
\cite[Proposition 2.14]{Ishiki2021ultra} and 
 \cite[Theorem I]{MR80905}
(note that
the property that $R$ is characteristic is equivalent to
saying  that 
 a range set $R$ has the countable coinitiality in the sense of \cite{Ishiki2021ultra}). 
\begin{prop}\label{prop:equivultmet}
Let $X$ be a topological space, 
and $R$ be a characteristic range set. 
Then the following are equivalent to each other:
\begin{enumerate}
\item 
The space
$X$ is metrizable and $\yolind(X)=0$; 
\item 
The space 
$X$ is ultrametrizable;
\item 
The space 
$X$ is $R$-ultrametrizable. 
\end{enumerate}
\end{prop}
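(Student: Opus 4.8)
The plan is to establish the two equivalences $(1)\Leftrightarrow(2)$ and $(2)\Leftrightarrow(3)$ separately, noting first that one implication is free of charge. Since $R\yosub[0,\infty)$, every $R$-valued ultrametric inducing the topology of $X$ is in particular a $[0,\infty)$-valued one, so $\ult{X}{R}\neq\emptyset$ immediately gives $(3)\Rightarrow(2)$. For $(1)\Leftrightarrow(2)$ I would invoke de Groot's characterization of non-Archimedean metrizability: a metrizable space carries an ultrametric generating its topology precisely when $\yolind(X)=0$. The direction $(2)\Rightarrow(1)$ is elementary, since the balls of an ultrametric are clopen, so $\yosind(X)=0$, and in a metrizable space the small and large inductive dimensions coincide, forcing $\yolind(X)=0$; the reverse direction $(1)\Rightarrow(2)$ is the substantial input and is exactly \cite[Theorem I]{MR80905}.

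The genuinely new work is the implication $(2)\Rightarrow(3)$, where the characteristic hypothesis on $R$ is used decisively. The key observation is that $R$ is characteristic if and only if $\inf(R\setminus\{0\})=0$, so one may fix a strictly decreasing sequence $(r_{n})_{n\ge 1}$ in $R\setminus\{0\}$ with $r_{n}\to 0$ (extract it from elements of $R$ witnessing the characteristic property at $t=1/n$). Given an ultrametric $\rho$ that generates the topology of $X$, I would define a non-decreasing function $\phi\colon[0,\infty)\to R$ by $\phi(0)=0$, $\phi(t)=r_{n}$ for $t\in(r_{n+1},r_{n}]$, and $\phi(t)=r_{1}$ for $t>r_{1}$, and then set $d=\phi\circ\rho$. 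The verification then splits into three routine checks: $d$ is $R$-valued by construction; $d$ is an ultrametric because $\phi$ non-decreasing yields $\phi(a\lor b)=\phi(a)\lor\phi(b)$, so applying $\phi$ to $\rho(x,y)\le\rho(x,z)\lor\rho(z,y)$ gives $d(x,y)\le d(x,z)\lor d(z,y)$, while $\phi(t)=0\iff t=0$ supplies the separation axiom; and $d$ generates the topology of $\rho$ since $\phi$ is non-decreasing, positive off $0$, and satisfies $\phi(t)\to 0$ as $t\to 0^{+}$, so $\rho$-convergence and $d$-convergence coincide. Hence $d\in\ult{X}{R}$, which proves $(3)$.

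I expect the conceptual heart of the argument to be the cited equivalence $(1)\Leftrightarrow(2)$: recovering an ultrametric from the purely dimension-theoretic hypothesis $\yolind(X)=0$ is de Groot's theorem and is not elementary. Within the self-contained portion, the only point demanding care is confirming that the step function $\phi$ preserves the topology even when $R$ is neither closed nor order-dense, which is precisely where the characteristic condition $\inf(R\setminus\{0\})=0$—equivalently the countable coinitiality of $R$—is indispensable; without it no such $\phi$ with $\phi(t)\to 0$ exists and the conclusion can fail.
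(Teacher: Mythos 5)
Your proof is correct and follows essentially the same route as the paper, which simply cites de Groot's theorem \cite[Theorem I]{MR80905} for the equivalence of (1) and (2) and \cite[Proposition 2.14]{Ishiki2021ultra} for the passage to $R$-valued ultrametrics via the countable coinitiality (characteristic) condition. Your explicit step-function $\phi$ with $d=\phi\circ\rho$ is a sound, self-contained rendering of that second citation, and the checks you list (monotonicity giving $\phi(a\lor b)=\phi(a)\lor\phi(b)$, positivity off $0$, and $\phi(t)\to 0$ as $t\to 0^{+}$) are exactly what is needed.
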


The proofs of next three lemmas 
 are presented in 
 Propositions 
 18.2, 
 18.4, 
 and 18.5 in 
\cite{MR2444734}, 
respectively. 
\begin{lem}\label{lem:isosceles}
Let $X$ be a set, and 
$d$ be a pseudo-metric on $X$. 
Then $d$ satisfies the strong 
triangle inequality if and only if 
for all $x, y, z\in X$, 
the inequality $d(x, z)<d(z, y)$ implies 
$d(z, y)=d(x, y)$. 
\end{lem}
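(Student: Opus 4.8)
The plan is to prove the two implications separately, using the symmetry $d(x,y)=d(y,x)$ of the pseudo-metric throughout. This is the familiar ``isosceles triangle'' reformulation of the strong triangle inequality, asserting that in every triple the two largest pairwise distances coincide.

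For the forward implication, I would assume the strong triangle inequality $d(x,y)\le d(x,z)\lor d(z,y)$ and suppose $d(x,z)<d(z,y)$. First, applying the inequality to the pair $x,y$ through $z$ gives $d(x,y)\le d(x,z)\lor d(z,y)=d(z,y)$, since $d(x,z)<d(z,y)$. Second, applying it to the pair $z,y$ through $x$ gives $d(z,y)\le d(z,x)\lor d(x,y)=d(x,z)\lor d(x,y)$; were $d(x,y)<d(z,y)$, the right-hand side would be strictly below $d(z,y)$ (again using $d(x,z)<d(z,y)$), a contradiction. Hence $d(z,y)\le d(x,y)$, and the two bounds combine to give $d(x,y)=d(z,y)$.

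For the backward implication, I would assume the stated condition and argue by contradiction that $d(x,y)\le d(x,z)\lor d(z,y)$ holds for every triple. Suppose instead that $d(x,y)>d(x,z)\lor d(z,y)$, so in particular $d(z,y)<d(x,y)$. Reading the hypothesis with the roles $x\mapsto z$, $z\mapsto y$, $y\mapsto x$ --- that is, $d(z,y)<d(y,x)$ forces $d(y,x)=d(z,x)$ --- and invoking symmetry, this yields $d(x,y)=d(x,z)$, contradicting $d(x,y)>d(x,z)$. Thus no such triple exists, which is exactly the strong triangle inequality.

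The only delicate point is that the hypothesis is triggered solely by a \emph{strict} inequality, so it supplies no direct information in the tie case $d(x,z)=d(z,y)$. The contradiction setup in the backward direction is precisely what allows a single strict-inequality instance to rule out the possibility $d(x,y)>d(x,z)\lor d(z,y)$ without a separate case analysis; apart from this, the argument is a routine manipulation of $\lor$ together with the symmetry of $d$.
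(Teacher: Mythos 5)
Your proof is correct: both implications are verified, the relabelling in the backward direction is applied consistently with the symmetry of $d$, and the contradiction argument does rule out $d(x,y)>d(x,z)\lor d(z,y)$ using only a strict-inequality instance of the hypothesis. The paper itself does not prove this lemma but delegates it to Proposition 18.2 of the cited reference, and your argument is the standard ``isosceles triangle'' one that that reference gives, so there is nothing further to reconcile.
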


\begin{lem}\label{lem:ultraopcl}
Let $(X, d)$ be a pseudo-ultrametric space, 
$a\in X$ and $r\in [0, \infty)$. 
Then, the following statements are ture:
\begin{enumerate}
\item
The sets
$B(p, r)$ and $U(a, r)$ are 
clopen in $X$. 
\item 
For all $q\in B(p, r)$ (resp.~$q\in U(p, r)$), 
we have $B(p, r)=B(q, r)$ 
(resp.~$U(p, r)=U(q, r)$). 
\end{enumerate}

\end{lem}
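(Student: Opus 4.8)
The plan is to prove statement (2) first — that every point of a ball may serve as its center — and then to read off most of statement (1) as a consequence, handling the two remaining ``closedness'' assertions by short direct arguments. Throughout, the only tool I need is the strong triangle inequality $d(x,y)\le d(x,z)\lor d(z,y)$; I will nowhere invoke the separation axiom $d(x,y)=0\To x=y$, which is precisely why the statements hold verbatim for pseudo-ultrametrics. Since the balls $B(p,r)$ and $U(p,r)$ are defined for $r\in(0,\infty)$, I take $r>0$ (so that $p\in U(p,r)$), and I write $p$ for the center point appearing in the hypothesis.

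For (2), suppose $q\in B(p,r)$, so $d(q,p)\le r$. For any $x\in B(p,r)$ the strong triangle inequality gives $d(x,q)\le d(x,p)\lor d(p,q)\le r\lor r=r$, hence $x\in B(q,r)$; thus $B(p,r)\yosub B(q,r)$. Because $d(p,q)=d(q,p)\le r$ we also have $p\in B(q,r)$, so the symmetric argument yields $B(q,r)\yosub B(p,r)$, and therefore $B(p,r)=B(q,r)$. Replacing $\le$ by $<$ everywhere gives the open-ball case $U(p,r)=U(q,r)$ for $q\in U(p,r)$.

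For (1), the closed ball $B(p,r)$ is closed and the open ball $U(p,r)$ is open for the usual reasons (continuity of the $1$-Lipschitz map $x\mapsto d(x,p)$ and the definition of the metric topology), so it remains to establish the reverse assertions. Openness of $B(p,r)$ is immediate from (2): given $q\in B(p,r)$ we have $q\in U(q,r)\yosub B(q,r)=B(p,r)$, so $B(p,r)$ is a neighborhood of each of its points. For the closedness of $U(p,r)$ I show its complement is open: if $d(y,p)\ge r$ and $z\in U(y,r)$, then $d(z,y)<r$, and were $z\in U(p,r)$ we would obtain the contradiction $d(y,p)\le d(y,z)\lor d(z,p)<r$; hence $U(y,r)$ misses $U(p,r)$ and lies in the complement.

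I expect no substantial obstacle: the lemma is a direct unfolding of the strong triangle inequality, and the phenomenon behind the complement argument in the last step is exactly the isosceles behavior recorded in Lemma \ref{lem:isosceles}. The only points demanding genuine care are bookkeeping ones — keeping the strict and non-strict inequalities correctly paired between the $B$ and $U$ cases, and verifying that the absence of the separation axiom never intervenes — so the proof should be short and computational rather than conceptual.
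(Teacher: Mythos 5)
Your proof is correct and is essentially the standard argument: the paper gives no proof of its own here, deferring to Propositions 18.2, 18.4, and 18.5 of the reference it cites, where the same strong-triangle-inequality computation (every point of a ball is a center, hence closed balls are open, and complements of open balls are open) appears. Your explicit restriction to $r>0$ is the right call: the paper's own definition of balls only covers $r\in(0,\infty)$, and for $r=0$ statement (1) would actually fail (e.g.\ $B(0,0)=\{0\}$ is not open in the nearly discrete space $(R, \yodisdis_{R})$ when $R$ is characteristic), so the ``$r\in[0,\infty)$'' in the lemma should be read as a slip.
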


\begin{lem}\label{lem:balldis}
Let $(X, d)$ be an ultrametric space, 
$r\in (0, \infty)$, and $a, b\in X$. 
Then for all $x\in B(a, r)$ and 
$y\in B(b, r)$, 
we have $d(x, y)=d(a, b)$. 
\end{lem}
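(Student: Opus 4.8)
The plan is to establish the equality by sandwiching $d(x,y)$ between two copies of $d(a,b)$ via the strong triangle inequality. I note first that the conclusion forces the tacit hypothesis $r<d(a,b)$: taking $a=b$ together with any $x\in B(a,r)\setminus\{a\}$ and $y=a$ gives $d(x,y)=d(x,a)>0=d(a,b)$, so the statement fails without it. I therefore carry out the argument under the assumption $r<d(a,b)$, which is the case of interest; in this situation $B(a,r)$ and $B(b,r)$ are moreover disjoint by Lemma \ref{lem:ultraopcl}.

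Fix $x\in B(a,r)$ and $y\in B(b,r)$, so that $d(x,a)\le r$ and $d(y,b)\le r$, and hence $d(x,a)\lor d(y,b)\le r<d(a,b)$. For the upper bound, two applications of the strong triangle inequality give $d(x,y)\le d(x,a)\lor d(a,b)\lor d(b,y)$; since $d(x,a)$ and $d(b,y)$ are both strictly smaller than $d(a,b)$, the maximum on the right equals $d(a,b)$, whence $d(x,y)\le d(a,b)$.

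For the lower bound, I apply the strong triangle inequality in the reverse direction to obtain $d(a,b)\le d(a,x)\lor d(x,y)\lor d(y,b)$. Because $d(a,x)\le r<d(a,b)$ and $d(y,b)\le r<d(a,b)$, neither of these two terms can realize the maximum, so necessarily $d(a,b)\le d(x,y)$. Combining the two bounds yields $d(x,y)=d(a,b)$.

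There is essentially no obstacle here beyond bookkeeping; the only point requiring care is the strictness $d(x,a),d(y,b)<d(a,b)$, which is precisely what lets the maximum operator pin down $d(a,b)$ rather than merely bound it. If one prefers to avoid reasoning inside $\lor$, the same conclusion follows by invoking Lemma \ref{lem:isosceles} twice: from $d(x,a)<d(a,b)$ one deduces $d(x,b)=d(a,b)$, and then from $d(y,b)<d(x,b)$ one deduces $d(x,y)=d(x,b)=d(a,b)$.
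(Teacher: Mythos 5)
Your proof is correct, and in fact the paper offers no argument of its own for this lemma: it is dispatched with a pointer to \cite[Proposition 18.5]{MR2444734}, so there is nothing internal to compare against beyond that citation. Your argument is the standard one and it is complete. You are also right to flag the tacit hypothesis: as literally printed the lemma fails (take $a=b$ and $x\in B(a,r)\setminus\{a\}$), and the intended assumption is $r<d(a,b)$, equivalently that $B(a,r)$ and $B(b,r)$ are disjoint, which is exactly the situation in which the paper invokes the lemma (to see that $\yocutdis{d}{r}$ is well defined on \emph{distinct} closed balls) and the form in which the cited reference states it. Both of your routes to the conclusion are sound: the two-sided estimate $d(x,y)\le d(x,a)\lor d(a,b)\lor d(b,y)=d(a,b)$ together with $d(a,b)\le d(a,x)\lor d(x,y)\lor d(y,b)$, where the strict inequalities $d(x,a),d(y,b)<d(a,b)$ force the maximum onto $d(a,b)$ and $d(x,y)$ respectively; and the double application of the isosceles principle (Lemma \ref{lem:isosceles}), first giving $d(x,b)=d(a,b)$ from $d(x,a)<d(a,b)$ and then $d(x,y)=d(x,b)$ from $d(y,b)<d(x,b)$. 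The second route is essentially the one-line proof the cited text has in mind, so your proposal both repairs the statement and supplies the missing argument.
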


\subsubsection{Nearly discrete spaces}
In this subsection, 
we verify  basic  properties of 
the nearly discrete space 
$(R, \yodisdis_{R})$. 

\begin{lem}\label{lem:nearlydis}
Let $R$ be a range set. 
Then the next statements are true. 
\begin{enumerate}[label=\textup{(\arabic*)}]
\item\label{item:nd:1}
The set $R\setminus \{0\}$ is 
discrete with respect to the nearly discrete space, i.e., 
all singletons are open. 
\item\label{item:nd:2}
If $R$ is characteristic, 
then $0$ is a unique accumulation point. 
\end{enumerate}
\end{lem}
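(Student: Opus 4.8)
The plan is to verify both statements by directly computing balls in $(R, \yodisdis_{R})$ straight from the definition of the nearly discrete metric, together with the definition of a characteristic range set.

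For \ref{item:nd:1}, I would fix $r\in R\setminus\{0\}$ and examine the open ball $U(r, r; \yodisdis_{R})$. If $x\in R$ satisfies $x\neq r$, then $\yodisdis_{R}(x, r)=x\lor r\ge r$, so $x\notin U(r, r; \yodisdis_{R})$; on the other hand $r>0$ gives $\yodisdis_{R}(r, r)=0<r$. Hence $U(r, r; \yodisdis_{R})=\{r\}$, so the singleton $\{r\}$ is open. This is exactly the assertion that every point of $R\setminus\{0\}$ is isolated, proving \ref{item:nd:1}.

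For \ref{item:nd:2}, I would first show that $0$ is an accumulation point and then use \ref{item:nd:1} for uniqueness. For the former, fix $\epsilon\in(0, \infty)$ and observe that $\yodisdis_{R}(x, 0)=x$ for every $x\neq 0$, so $U(0, \epsilon; \yodisdis_{R})=\{0\}\cup\{\, x\in R\mid 0<x<\epsilon\,\}$. Since $R$ is characteristic, applying the definition with the value $\epsilon/2$ yields some $r\in R\setminus\{0\}$ with $0<r\le \epsilon/2<\epsilon$, so $r$ is a point of $R$ lying in $U(0, \epsilon; \yodisdis_{R})$ and distinct from $0$. As $\epsilon$ was arbitrary, $0$ is an accumulation point. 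For uniqueness, any $p\in R\setminus\{0\}$ is isolated by \ref{item:nd:1} and is therefore not an accumulation point; hence $0$ is the unique accumulation point.

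There is no serious obstacle here, as everything reduces to unwinding the definitions of $\yodisdis_{R}$ and of characteristic range sets. The only point that calls for a little care is the strictness of the inequality defining the open ball: one must invoke the characteristic condition at a parameter strictly below $\epsilon$ (for instance $\epsilon/2$) so that the witnessing $r$ lands strictly inside $U(0, \epsilon; \yodisdis_{R})$ rather than merely on its boundary $\yosph(0, \epsilon; \yodisdis_{R})$.
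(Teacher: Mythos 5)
Your proof is correct and follows essentially the same route as the paper, which simply records the ball computations $U(x, x; \yodisdis_{R})=\{x\}$ and $U(0, x; \yodisdis_{R})=[0, x)\cap R$ for $x\in R\setminus\{0\}$ and derives both statements from them. Your extra care in invoking the characteristic property at a parameter strictly below $\epsilon$ is a sound way to handle the strict inequality in the open ball.
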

\begin{proof}
The lemma follows from 
the fact that $U(x, x; \yodisdis_{R})=\{x\}$ and 
$U(0, x; \yodisdis_{R})=[0, x)\cap R$
for all 
$x\in R\setminus \{0\}$. 
\end{proof}

From  the definition of the nearly discrete metrics, 
we deduce the next two lemmas. 
\begin{lem}\label{lem:disep}
Let $R$ be a range set.  
Then for all $x, y\in R$, 
the value  $\yodisdis_{R}(x, y)$ coincides 
with the infimum of $\epsilon\in [0, \infty)$ such that 
$x\le y\lor \epsilon$ and $y\le x\lor \epsilon$. 
\end{lem}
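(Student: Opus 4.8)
The plan is to write $S(x,y)$ for the infimum on the right-hand side, namely $S(x,y)=\inf\{\,\epsilon\in[0,\infty)\mid x\le y\lor\epsilon \text{ and } y\le x\lor\epsilon\,\}$, and to evaluate it by a direct two-case computation. Since the pair of conditions defining the admissible set of $\epsilon$ is symmetric under interchanging $x$ and $y$, and since $\yodisdis_{R}$ is itself symmetric, I may reduce to the cases $x=y$ and $x<y$.

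First I would treat $x=y$. Here $\yodisdis_{R}(x,y)=0$ by definition, while for every $\epsilon\in[0,\infty)$ the two conditions both collapse to $x\le x\lor\epsilon$, which always holds; hence the admissible set of $\epsilon$ is all of $[0,\infty)$ and $S(x,y)=0$, as required. Next I would treat $x<y$, so that $\yodisdis_{R}(x,y)=x\lor y=y$. The condition $x\le y\lor\epsilon$ holds automatically because $x<y\le y\lor\epsilon$, so the admissible set is governed entirely by $y\le x\lor\epsilon$. As $x<y$, the value $x\lor\epsilon$ can reach $y$ only through $\epsilon$, so this condition is equivalent to $\epsilon\ge y$; the admissible set is therefore $[y,\infty)$, whose infimum is $y=x\lor y=\yodisdis_{R}(x,y)$.

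There is no genuine obstacle here: the statement reduces in each case to computing the infimum of an explicitly described subset of $[0,\infty)$. The only points deserving a line of care are the appeal to symmetry, which lets me assume $x<y$ rather than separately handling $x>y$, and the elementary observation that $\inf[y,\infty)=y$. Combining the two cases yields the claimed identity for all $x,y\in R$.
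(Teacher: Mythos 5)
Your argument is correct and is exactly the elementary case analysis ($x=y$ versus $x<y$, using symmetry) that the paper leaves implicit when it states that this lemma follows directly from the definition of the nearly discrete metric. Nothing further is needed.
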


\begin{lem}\label{lem:yodisdiscomp}
For every range set $R$, the space 
$(R, \yodisdis_{R})$ is complete. 
\end{lem}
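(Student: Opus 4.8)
The plan is to take an arbitrary Cauchy sequence $(x_n)_{n\in\nn}$ in $(R, \yodisdis_{R})$ and exhibit a limit inside $R$. The one computation I will lean on throughout is that $\yodisdis_{R}(x, 0) = x$ for every $x\in R$ (immediate from the definition, since $x\lor 0 = x$), together with the companion fact that $\yodisdis_{R}(x, y) = x\lor y \ge x$ whenever $x\neq y$. Informally: the distance from any point to $0$ equals the point itself, and any point is dominated by its distance to a distinct companion.

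First I would split into two cases according to whether $(x_n)$ is eventually constant. If there is an index $N$ with $x_n = x_N$ for all $n\ge N$, then the sequence converges to $x_N\in R$ and nothing further is needed.

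The substantive case is when $(x_n)$ is not eventually constant, and here I would show that the limit must be $0$. Fixing $\epsilon>0$, the Cauchy condition supplies an $N$ with $\yodisdis_{R}(x_m, x_n)<\epsilon$ for all $m,n\ge N$. Since the tail $(x_n)_{n\ge N}$ is non-constant, for each fixed $k\ge N$ one can choose some $n\ge N$ with $x_n\neq x_k$; for that pair $x_k \le x_k\lor x_n = \yodisdis_{R}(x_k, x_n) < \epsilon$. Thus $\yodisdis_{R}(x_k, 0) = x_k < \epsilon$ for every $k\ge N$, which is precisely convergence to $0$. As $0\in R$ by the definition of a range set, the limit lies in $R$, completing both cases.

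The step I expect to demand the most care is getting the dichotomy right: the correct split is \emph{eventually constant} versus \emph{not eventually constant}, rather than finitely versus infinitely many distinct values, since a Cauchy sequence may repeat values without stabilizing. The conceptual point underneath — and the reason completeness survives even though $R$ may be an arbitrary, non-closed subset of $[0,\infty)$ with no positive accumulation points — is that $\yodisdis_{R}$ is expansive enough that any non-stabilizing Cauchy sequence is forced down to $0$, and $0$ is guaranteed to belong to $R$. Once this is recognized, the verification reduces to the short estimate above.
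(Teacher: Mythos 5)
Your proof is correct: the dichotomy between eventually constant and not eventually constant sequences is the right one, and the key estimate $x_k \le x_k \lor x_n = \yodisdis_{R}(x_k, x_n) < \epsilon$ forces any non-stabilizing Cauchy sequence to $0 \in R$. The paper omits the proof entirely (stating only that the lemma follows from the definition of $\yodisdis_{R}$), and your argument is precisely the intended verification.
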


\subsection{Tenuous sets}\label{subsec:tenuous}

A subset $E$ of $[0, \infty)$ is said to be
 \emph{semi-sporadic} if 
there exists a strictly decreasing sequence 
$\{a_{i}\}_{i\in \zz_{\ge 0}}$ in $(0, \infty)$ such that 
$\lim_{i\to \infty}a_{i}=0$ and 
$E=\{0\}\cup \{\, a_{i}\mid i\in \zz_{\ge 0}\, \}$. 
This concept is a half version of 
sporadic sets defined in 
\cite{MR4527953}. 
A subset of $[0, \infty)$ is 
\emph{tenuous} 
if it is finite or semi-sporadic. 
Despite its simplicity, 
this concept has a central role in the present paper.

\subsubsection{Properties of tenuous sets}

\begin{lem}\label{lem:discom}
Let $K$ be a subset of $[0, \infty)$. 
Then 
$K$ is tenuous if and only if 
$K$ is a closed subset of $[0, \infty)$ with respect to the Euclidean topology 
satisfying  that 
  $K\cap [r, \infty)$ is finite
  for all $r\in (0, \infty)$. 
\end{lem}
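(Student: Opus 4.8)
The plan is to prove the biconditional in Lemma~\ref{lem:discom} by handling the two directions separately, treating the finite and semi-sporadic cases of tenuousness uniformly where possible.

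First I would prove the forward direction. Assume $K$ is tenuous. If $K$ is finite, then it is trivially closed in the Euclidean topology, and $K\cap[r,\infty)$ is a subset of a finite set, hence finite, for every $r\in(0,\infty)$. If $K$ is semi-sporadic, write $K=\{0\}\cup\{a_i\mid i\in\zz_{\ge 0}\}$ with $\{a_i\}$ strictly decreasing and $\lim_{i\to\infty}a_i=0$. To see that $K$ is closed, I would show its complement in $[0,\infty)$ is open: any point not in $K$ is either larger than $a_0$, or lies strictly between two consecutive terms $a_{i+1}<x<a_i$, or lies in $(0,a_i)$ for all $i$ but is nonzero---but this last case is impossible since $a_i\to 0$ forces such an $x$ to equal $0$. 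In each genuine case a small interval around the point avoids $K$. For the finiteness of $K\cap[r,\infty)$, since $a_i\to 0$ there are only finitely many indices $i$ with $a_i\ge r$, so $K\cap[r,\infty)\subseteq\{a_i\mid a_i\ge r\}$ is finite.

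Next I would prove the converse. Suppose $K$ is closed in $[0,\infty)$ and $K\cap[r,\infty)$ is finite for every $r\in(0,\infty)$. The finiteness condition immediately shows $K$ has no accumulation point in $(0,\infty)$: any accumulation point $p>0$ would force $K\cap[p/2,\infty)$ to be infinite. If $K$ is finite, we are done, so assume $K$ is infinite. Then $K$ must contain points arbitrarily close to $0$ (otherwise $K\subseteq[r,\infty)$ for some $r>0$, making $K$ finite). Since $K$ is closed and $0$ is thus an accumulation point of $K$, we have $0\in K$. Now I would enumerate the nonzero points of $K$: for each $n$, the set $K\cap[1/n,\infty)$ is finite, and taking the union over all $n$ recovers $K\setminus\{0\}$, which is therefore countable; listing these points in strictly decreasing order produces a sequence $\{a_i\}_{i\in\zz_{\ge 0}}$ which is strictly decreasing and, having no nonzero accumulation point, must satisfy $\lim_{i\to\infty}a_i=0$. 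Hence $K=\{0\}\cup\{a_i\mid i\in\zz_{\ge 0}\}$ is semi-sporadic.

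The main obstacle I anticipate is the bookkeeping in the converse direction, specifically verifying that $K\setminus\{0\}$ can be arranged as a genuine strictly decreasing sequence indexed by $\zz_{\ge 0}$ and that this sequence converges to $0$. The key structural fact driving everything is that the hypothesis ``$K\cap[r,\infty)$ finite for all $r>0$'' is precisely what prevents any accumulation away from $0$ while permitting (indeed forcing, in the infinite case) accumulation at $0$; once this is pinned down, the enumeration and the limit claim follow from elementary order-theoretic and topological reasoning rather than any delicate estimate.
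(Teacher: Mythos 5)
Your proof is correct and follows essentially the same route as the paper: the forward direction is routine, and for the converse the paper likewise reduces to the infinite case and extracts the strictly decreasing enumeration of $K\setminus\{0\}$ (via an inductive ``take the maximum of $K\cap[0,a_n)$'' step, which is exactly the justification your ``list in strictly decreasing order'' step needs, and which the hypothesis that $K\cap[r,\infty)$ is finite for all $r>0$ supplies). The only cosmetic difference is that the paper asserts the forward direction without proof and packages the enumeration as an explicit induction rather than an order-type observation.
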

\begin{proof}
If $K$ is tenuous, 
then $K$ satisfies the conditions stated in the lemma. 
Next assume that $K$ is closed and $K\cap [r, \infty)$ is finite
  for all $r\in (0, \infty)$. 
We only need to consider the case where 
$K$ is infinite. 
We first prove the following claim: 
\begin{enumerate}[label=\textup{(Cl)}]
\item\label{item:claim}
  For all $l\in (0, \infty)$, 
the set $K\cap[0, l)$ has the maximum. 
\end{enumerate}
Indeed, 
due to the fact that  $K$ is infinite and $K\cap [l, \infty)$ is finite, 
 we can take $s\in K\cap [0, l)$. 
Since $K\cap [s, \infty)$ is finite, 
the set $K\cap [0, l)$ has the maximum. 

Using  induction and the claim \ref{item:claim}, we  define a sequence $\{a_{i}\}_{i\in \zz_{\ge 0}}$ as follows: 
The value $a_{0}$ is defined as 
the maximum of $K$.
If $a_{0}, \dots, a_{n}$ have been already determined, 
we define $a_{n+1}$ as the maximum of 
the set $K\cap [0, a_{n})$. 
From the definition of $\{a_{i}\}_{i\in \zz_{\ge 0}}$, 
it follows  that $a_{i+1}<a_{i}$ for all $i\in \zz_{\ge 0}$. 
Put $h=\inf \{\, a_{i}\mid i\in \zz_{\ge 0}\, \}$.
If $h$ were not $0$, then 
the set $K\cap [h, \infty)$ would be infinite. 
This is a contradiction to the assumption. 
Thus $h=0$,  and hence $\lim_{i\to \infty}a_{i}=0$. 
Since $K$ is closed, 
we conclude that 
$K=\{0\}\cup \{\, a_{i}\mid i\in \zz_{\ge 0}\, \}$. 
This means that  $K$ is semi-sporadic, and hence 
it is tenuous. 
\end{proof}

In the nearly discrete space 
$(R, \yodisdis_{R})$, 
compact subsets coincide
with the 
tenuous sets. 
\begin{lem}\label{lem:spsp}
Let $R$ be a range set. 
Then 
a subset $K$ of 
$(R, \yodisdis_{R})$ is compact if and only if  
$K$ is tenuous.  
\end{lem}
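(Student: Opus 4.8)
The plan is to deduce the statement from the Euclidean characterization of tenuous sets established in Lemma \ref{lem:discom}. The key geometric input is the very simple shape of the nearly discrete topology recorded in Lemma \ref{lem:nearlydis}: every point of $R\setminus\{0\}$ is isolated, since $U(x,x;\yodisdis_{R})=\{x\}$, while every neighborhood of $0$ contains a set of the form $[0,r)\cap R=U(0,r;\yodisdis_{R})$ for some $r>0$. Thus a subset of $R$ is open precisely when it is a union of nonzero singletons together with, in case it contains $0$, some tail $[0,r)\cap R$. I would make free use of this description throughout.

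For the direction ``tenuous $\Rightarrow$ compact'' I would argue directly from open covers. If $K$ is finite the claim is trivial, so I may assume $K=\{0\}\cup\{\,a_i\mid i\in\zz_{\ge 0}\,\}$ is semi-sporadic with $a_i$ strictly decreasing to $0$. Given any cover of $K$ by open subsets of $R$, some member $U$ contains $0$ and hence contains $[0,r)\cap R$ for some $r>0$; since $a_i\to 0$, only finitely many $a_i$ lie in $[r,\infty)$, and each of these finitely many points is absorbed by one further member of the cover. This produces a finite subcover, so $K$ is compact.

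For the converse ``compact $\Rightarrow$ tenuous'' I would verify the two conditions of Lemma \ref{lem:discom} by exhibiting an open cover with no finite subcover whenever a condition fails. First, if $K\cap[r,\infty)$ were infinite for some $r>0$, then the open set $[0,r)\cap R$ together with the singletons $\{x\}$ for $x\in K\cap[r,\infty)$ (each open, as such $x$ is nonzero) covers $K$ but admits no finite subcover, because $[0,r)\cap R$ meets none of the infinitely many points $\ge r$ and each such point lies only in its own singleton; this contradicts compactness. Hence $K\cap[r,\infty)$ is finite for every $r>0$, which already forces every Euclidean accumulation point of $K$ to be $0$. Second, to see that $K$ is Euclidean-closed it remains to check that $0\in K$ whenever $0$ is an accumulation point: were $0\notin K$, every point of $K$ would be nonzero and hence isolated, so the singletons $\{x\}$, $x\in K$, would cover the necessarily infinite set $K$ with no finite subcover, again contradicting compactness. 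Thus $K$ contains all its Euclidean accumulation points and is closed, and Lemma \ref{lem:discom} gives that $K$ is tenuous.

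The main obstacle is the converse direction, where ``no finite subcover'' must be converted into the two Euclidean conditions; the delicate point is the bookkeeping around the single non-isolated point $0$, in particular distinguishing the cases $0\in K$ and $0\notin K$ and confirming that the covers built from isolated singletons genuinely admit no finite refinement. The forward direction and the final appeal to Lemma \ref{lem:discom} should be routine.
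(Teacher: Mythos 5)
Your proof is correct and takes essentially the same route as the paper's: both directions reduce to Lemma \ref{lem:discom} using the fact that every point of $R\setminus\{0\}$ is isolated in the nearly discrete topology. You are in fact slightly more thorough than the paper, which for the converse simply observes that $K\cap[r,\infty)$ is discrete and compact (hence finite) and leaves the Euclidean-closedness of $K$ implicit, whereas you verify it explicitly via the case distinction on whether $0\in K$.
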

\begin{proof}
If $K$ is tenuous, 
then it is compact in $(R, \yodisdis_{R})$. 
Next assume that $K$ is compact. 
We only need to consider the case where 
$K$ is infinite. 
By \ref{item:nd:1} in Lemma \ref{lem:nearlydis}, 
for all  $r\in (0, \infty)$, 
we see that
the set $K\cap [r, \infty)$  is 
discrete and compact, and hence it is 
finite. 
According to Lemma \ref{lem:discom}, 
the set $K$ is tenuous. 
This leads to  the lemma. 
\end{proof}

\begin{cor}\label{cor:sp}
Let $R$ 
be a range set.  
Let $X$ be a compact  space. 
If  $f\colon X\to R$ is  a continuous map with respect to 
$\yodisdis_{R}$, 
then the  image of $f$ is 
a tenuous 
 subset in  
$R$. 
\end{cor}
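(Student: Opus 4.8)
The plan is to derive this corollary immediately from the characterization of compact subsets of the nearly discrete space already established in Lemma~\ref{lem:spsp}, together with the standard topological fact that the continuous image of a compact space is compact. Since the heavy lifting has already been carried out in Lemma~\ref{lem:discom} and Lemma~\ref{lem:spsp}, this is a short deduction rather than a construction.

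First I would observe that $f$ is continuous from the compact space $X$ into the nearly discrete space $(R, \yodisdis_{R})$. Because continuity preserves compactness, the image $f(X)$, endowed with the subspace topology inherited from $(R, \yodisdis_{R})$, is a compact subset of $(R, \yodisdis_{R})$. This is the only genuinely topological input, and it requires no special structure of $R$ beyond the fact that $\yodisdis_{R}$ is a metric (Lemma~\ref{lem:nearlydis} guarantees we are working in the intended nearly discrete topology).

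Next I would invoke Lemma~\ref{lem:spsp}, which asserts that a subset $K$ of $(R, \yodisdis_{R})$ is compact if and only if $K$ is tenuous. Applying this with $K = f(X)$ directly gives that $f(X)$ is tenuous, completing the proof. There is no serious obstacle here: the essential content lies in Lemma~\ref{lem:spsp}, and this corollary is simply the observation that compactness of $f(X)$ is automatic whenever the domain $X$ is compact. The only point to keep in mind is to apply the compactness-preservation property with respect to the nearly discrete topology on $R$ (the topology generated by $\yodisdis_{R}$), which is exactly the topology with respect to which $f$ is assumed continuous.
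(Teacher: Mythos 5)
Your proposal is correct and is exactly the intended argument: the paper states this as an immediate corollary of Lemma~\ref{lem:spsp}, with the only additional input being that the continuous image of the compact space $X$ is compact in $(R, \yodisdis_{R})$. Nothing further is needed.
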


\subsection{Function spaces}\label{subsec:funcsp}
In this subsection, we 
discuss some properties of 
$(\yomapsp{X}{R}, \yomaindis)$ and 
$(\yomaps{X}{R}, \yomaindis)$.

\subsubsection{Metric propeties}
We first provide two expressions of 
the ultrametric $\yomaindis$ on 
$\yomapsp{X}{R}$ and 
$\yomaps{X}{R}$. 
\begin{prop}\label{prop:supsup}
Let $X$ be a compact space, 
$R$ be a range set, 
and 
$f, g\in \yomapsp{X}{R}$. 
Then  $\yomaindis(f, g) =\sup_{x\in X}\yodisdis_{S}(f(x), g(x))$. 
\end{prop}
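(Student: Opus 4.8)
The plan is to establish the two inequalities $\yomaindis(f, g)\ge S$ and $\yomaindis(f, g)\le S$, where I write $S=\sup_{x\in X}\yodisdis_{R}(f(x), g(x))$. The central observation is that Lemma \ref{lem:disep} rewrites each pointwise value $\yodisdis_{R}(f(x), g(x))$ as the infimum of those $\epsilon\in[0,\infty)$ for which $f(x)\le g(x)\lor \epsilon$ and $g(x)\le f(x)\lor \epsilon$; these are precisely the pointwise instances of the conditions defining $\yomaindis(f, g)$, so the whole argument reduces to a careful interchange of a supremum over $x\in X$ with an infimum over $\epsilon$.

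For the inequality $\yomaindis(f, g)\ge S$, I would take an arbitrary $\epsilon\in(0,\infty)$ that is admissible for $\yomaindis(f, g)$, that is, one satisfying $f(x)\le g(x)\lor \epsilon$ and $g(x)\le f(x)\lor \epsilon$ for every $x\in X$. Fixing a single $x$, this $\epsilon$ lies in the set whose infimum Lemma \ref{lem:disep} identifies with $\yodisdis_{R}(f(x), g(x))$, hence $\yodisdis_{R}(f(x), g(x))\le \epsilon$. Taking the supremum over $x\in X$ gives $S\le \epsilon$, and passing to the infimum over all admissible $\epsilon$ yields $S\le \yomaindis(f, g)$.

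For the reverse inequality $\yomaindis(f, g)\le S$, I would show that every $\epsilon\ge S$ with $\epsilon>0$ is admissible for $\yomaindis(f, g)$. Indeed, for each $x\in X$ we have $\epsilon\ge S\ge \yodisdis_{R}(f(x), g(x))$, and a short case split on whether $f(x)=g(x)$ (using $\yodisdis_{R}(f(x), g(x))=f(x)\lor g(x)$ in the unequal case) shows that $f(x)\le g(x)\lor\epsilon$ and $g(x)\le f(x)\lor\epsilon$ hold simultaneously. Thus, when $S>0$ the value $\epsilon=S$ is itself admissible and positive, giving $\yomaindis(f, g)\le S$. When $S=0$ the pointwise distances all vanish, so $f=g$ and $\yomaindis(f, g)=0=S$ as well.

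The only genuine subtlety, and the place I would be most careful, is the mismatch between the domain $(0,\infty)$ used in the definition of $\yomaindis$ and the domain $[0,\infty)$ appearing in Lemma \ref{lem:disep}; this is exactly what forces the separate treatment of the case $S=0$ above. Compactness of $X$ enters only to guarantee that $S$ is finite: by Corollary \ref{cor:sp} the images $f(X)$ and $g(X)$ are tenuous, so the pointwise values are bounded and $\yomaindis$ is a genuine $[0,\infty)$-valued ultrametric rather than an extended one. The asserted equality itself is a formal interchange of supremum and infimum and holds regardless of finiteness.
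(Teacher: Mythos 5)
Your proposal is correct and follows essentially the same route as the paper's proof: both inequalities are obtained by applying Lemma \ref{lem:disep} pointwise and exchanging the supremum over $x$ with the infimum over admissible $\epsilon$. Your extra care about the domain mismatch $(0,\infty)$ versus $[0,\infty)$ and the case $S=0$ is a harmless refinement that the paper glosses over.
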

\begin{proof}
Put $a=\yomaindis(f, g)$ and 
$b=\sup_{x\in X}\yodisdis_{S}(f(x), g(x))$. 
Take an arbitrary value  $u\in R$ satisfying 
that 
$f(x)\le g(x)\lor u$
and $g(x)\le f(x)\lor u$ for all 
$x\in X$. 
Then, due to 
Lemma \ref{lem:disep},  for each $x\in X$, 
we have $\yodisdis_{S}(f(x), g(x))\le u$. 
This implies that  $b\le u$,  and hence $b\le a$. 

We next show the converse inequality. 
Since $\yodisdis_{S}(f(x), g(x))\le b$ for all 
$x\in X$, 
Lemma \ref{lem:disep} implies  $f(x)\le g(x)\lor b$ and 
$g(x)\le f(x)\lor b$. 
Thus $a\le b$. 
This finishes the proof. 
\end{proof}

For a map $f\colon X\to Y$, 
we write  $\yoimage{f}=f(X)$. 
We can describe $\yomaindis(f, g)$ more precisely. 
\begin{prop}\label{prop:maindisprop}
Let $X$ be a compact space, 
$R$ be a range set, 
and 
$f, g\in \yomapsp{X}{R}$. 
Then the value $\yomaindis(f, g)$ is equal to the 
minimum of all $t\in [0, \infty)$ such that 
for all $x\in X$ with $t<(f\lor g)(x)$, we have $f(x)=g(x)$. 
Moreover, 
the value $\yomaindis(f, g)$ belongs to 
$\yoimage{f}\cup \yoimage{g}\cup\{0\}$. 
Consequently, 
the ultrametric $\yomaindis$  is 
$R$-valued. 
\end{prop}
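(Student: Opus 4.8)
The plan is to base everything on the identity $\yomaindis(f,g)=\sup_{x\in X}\yodisdis_{R}(f(x),g(x))$ furnished by Proposition \ref{prop:supsup}; write $a$ for this value. For the first assertion I would introduce the set $T$ of all $t\in[0,\infty)$ with the stated property and prove $T=[a,\infty)$, which yields at once that the minimum exists and equals $a$. To get $[a,\infty)\subseteq T$, I would argue that if $t\ge a$ and $(f\lor g)(x)>t$ at some $x$, then necessarily $f(x)=g(x)$, since otherwise $\yodisdis_{R}(f(x),g(x))=(f\lor g)(x)>t\ge a$ would violate the definition of $a$ as a supremum. For the reverse inclusion $T\subseteq[a,\infty)$, I would fix $t\in T$ and check $\yodisdis_{R}(f(x),g(x))\le t$ pointwise: this is clear when $f(x)=g(x)$, and when $f(x)\ne g(x)$ membership $t\in T$ rules out $(f\lor g)(x)>t$, forcing $\yodisdis_{R}(f(x),g(x))=(f\lor g)(x)\le t$; taking the supremum over $x$ gives $a\le t$.

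For the second assertion the decisive point is that the supremum defining $a$ is attained. I would set $S=\{\,\yodisdis_{R}(f(x),g(x))\mid x\in X\,\}$ and observe that $S\subseteq\yoimage{f}\cup\yoimage{g}\cup\{0\}$, because $\yodisdis_{R}(f(x),g(x))$ equals $\max(f(x),g(x))$ when $f(x)\ne g(x)$ and equals $0$ otherwise. By Corollary \ref{cor:sp} the images $\yoimage{f}$ and $\yoimage{g}$ are tenuous, so Lemma \ref{lem:discom} guarantees that $(\yoimage{f}\cup\yoimage{g})\cap[r,\infty)$ is finite for every $r>0$. When $a=0$ there is nothing to show, so I would assume $a>0$ and pass to $S\cap[a/2,\infty)$, which is a finite set since it is contained in that finite truncation. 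Because $a>a/2$, every element of $S$ close to $a$ already lies in this truncation, so its supremum is still $a$; as a finite set of reals attains its supremum, I conclude $a\in S$, and since $a>0$ the witnessing point has $f(x)\ne g(x)$, giving $a=\max(f(x),g(x))\in\yoimage{f}\cup\yoimage{g}$.

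The final clause would then be immediate: $\yoimage{f}$ and $\yoimage{g}$ are contained in $R$ and $0\in R$ because $R$ is a range set, so $\yomaindis(f,g)\in R$, i.e.\ $\yomaindis$ is $R$-valued. I expect the only genuine obstacle to be the attainment of the supremum in the middle step; this is precisely where the compactness of $X$ is used, entering through the tenuousness of the images in Corollary \ref{cor:sp} together with the finiteness-above-every-positive-threshold characterization of tenuous sets in Lemma \ref{lem:discom}. The first assertion, by contrast, should be a direct unwinding of the definitions once Proposition \ref{prop:supsup} is available.
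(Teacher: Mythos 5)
Your proof is correct, and it reaches the conclusion by a slightly different route than the paper. The paper does not pass through Proposition \ref{prop:supsup}: it works directly with the definition of $\yomaindis$ as an infimum, lets $v$ be the infimum of the set $T$ of admissible thresholds, and first shows $v\in A:=\yoimage{f}\cup\yoimage{g}\cup\{0\}$ by a gap argument on the tenuous set $A$ (if $v\notin A$, pick $u<v$ with $(u,v)\cap A=\emptyset$ and contradict infimality), after which it verifies the two inequalities $\yomaindis(f,g)\le v$ and $v\le\yomaindis(f,g)$ by hand. You instead invoke the supremum formula $\yomaindis(f,g)=\sup_{x}\yodisdis_{R}(f(x),g(x))$, identify $T$ exactly as $[a,\infty)$, and obtain membership in $A$ by showing the supremum is attained (finiteness of $S\cap[a/2,\infty)$ via tenuousness). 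Both arguments use compactness in the same essential place, namely through Corollary \ref{cor:sp} and Lemma \ref{lem:discom}; your version has the advantage of reducing everything to the pointwise distance function and a clean attainment statement, while the paper's version is self-contained relative to the raw definition of $\yomaindis$ and its gap argument generalizes immediately to showing that the infimum is a minimum. One small point worth making explicit in your write-up: the set $S\cap[a/2,\infty)$ is nonempty precisely because $\sup S=a>a/2$, which is what licenses replacing $\sup S$ by the maximum of that finite truncation.
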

\begin{proof}
Let $v$ be the infimum of 
 all $t\in [0, \infty)$ such that 
for all $x\in X$ with $t<(f\lor g)(x)$, 
we have $f(x)=g(x)$.
Put $A=\yoimage{f}\cup \yoimage{g}\cup\{0\}$. 
Notice that $A$ is tenuous. 
Suppose, contrary to our statement, 
that 
$v\not\in A$. 
Since $A$ is 
tenuous, we can take 
$u\in [0, \infty)$
such that $u<v$ and $(u, v)\cap A=\emptyset$. 
Then, 
for all $x\in X$ with $u<(f\lor g)(x)$, 
we have $f(x)=g(x)$. 
This implies that $v$ is not the infimum, which 
 is a contradiction. 
Thus $v\in A$. 

We now prove $\yomaindis(f, g)\le v$. 
Take an arbitrary point $x\in X$. 
If $v <(f\lor g)(x)$, 
then the definition of $v$ implies 
 $f(x)=g(x)$, 
 and hence 
we have 
$f(x)\le g(x)\lor v$
 and 
 $g(x)\le f(x)\lor v$. 
If $(f\lor  g)(x)\le v$, 
then $f(x)\le v$ and $g(x)\le v$, 
which  yield    $f(x)\le g(x)\lor v$
and $g(x)\le f(x)\lor v$. 
These two cases  show that  
$\yomaindis(f, g)\le v $. 
To prove $v\le \yomaindis(f, g)$, 
take an arbitrary value
  $t\in R$ with for which 
$f(x)\le g(x)\lor t$ and 
$g(x)\le f(x)\lor t$ for all $x\in X$. 
Take $x\in X$ satisfying that  $t<(f\lor g)(x)$. 
In this setting, 
we may assume that $t< f(x)$. 
Then $g(x)\le f(x)\lor t=f(x)$. 
From 
$f(x)\le g(x)\lor t$ 
and 
$t<f(x)$, 
it follows that 
$f(x)\le g(x)$. 
Thus $f(x)=g(x)$. 
Hence $v\le \yomaindis(f, g)$. 
This completes the proof. 
\end{proof}

\begin{cor}\label{cor:rrr}
Let $X$ be a compact space, 
and $R$ be a range set.  
Let $r\in R\setminus \{0\}$, 
and $f, g\in \yomapsp{X}{S}$. 
Then 
$r=\yomaindis(f, g)$ if and only if 
$f^{-1}(r)\neq g^{-1}(r)$ and
$f(x)=g(x)$
whenever 
$r<(f\lor g)(x)$.  
\end{cor}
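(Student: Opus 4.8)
The plan is to read everything off Proposition~\ref{prop:maindisprop}. Writing
\[
T=\{\, t\in[0,\infty) : f(x)=g(x)\ \text{whenever}\ t<(f\lor g)(x)\,\},
\]
the proposition says $\yomaindis(f,g)=\min T$ and $\yomaindis(f,g)\in A:=\yoimage{f}\cup\yoimage{g}\cup\{0\}$. The first thing I would note is that $T$ is upward closed: if $t\in T$ and $t\le t'$, then $t<(f\lor g)(x)$ forces $f(x)=g(x)$, so $t'\in T$. Hence $T=[\yomaindis(f,g),\infty)$, so $r=\yomaindis(f,g)$ holds exactly when $r\in T$ and no value strictly below $r$ lies in $T$; I will localize the latter condition to a single element $s$ just below $r$.

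Next I would bring in tenuousness. By Corollary~\ref{cor:sp} the images $\yoimage{f}$ and $\yoimage{g}$ are tenuous, and then $A$ is tenuous as well, being closed and meeting every $[\epsilon,\infty)$ ($\epsilon>0$) in a finite set (Lemma~\ref{lem:discom}). In either implication of the corollary $r\in A$ (in the forward direction because $r=\yomaindis(f,g)\in A$, in the backward direction because $f^{-1}(r)\neq g^{-1}(r)$ forces $r\in\yoimage{f}\cup\yoimage{g}$). Tenuousness lets me set $s=\max(A\cap[0,r))$; this maximum exists, $0\le s<r$, and crucially $(s,r)\cap A=\emptyset$. Since $(f\lor g)(x)\in A$ for every $x$, the gap property yields the key implication: $s<(f\lor g)(x)$ forces $(f\lor g)(x)\ge r$.

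With $s$ in hand the proof reduces to two translations. The condition ``$f(x)=g(x)$ whenever $r<(f\lor g)(x)$'' is literally $r\in T$. I then claim that, assuming $r\in T$, the condition $f^{-1}(r)\neq g^{-1}(r)$ is equivalent to $s\notin T$. Indeed, $s\notin T$ produces an $x$ with $s<(f\lor g)(x)$ and $f(x)\neq g(x)$; the key implication gives $(f\lor g)(x)\ge r$, while $r\in T$ excludes $(f\lor g)(x)>r$, so $(f\lor g)(x)=r$ with $f(x)\neq g(x)$, which says exactly that one of $f(x),g(x)$ equals $r$ and the other does not, i.e.\ $x$ witnesses $f^{-1}(r)\neq g^{-1}(r)$; running the computation backwards from such a witness (and checking, via $r\in T$, that the non-$r$ value is $<r$) gives the converse. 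Finally, because $\yomaindis(f,g)\in A$ and $(s,r)\cap A=\emptyset$, one has $\yomaindis(f,g)=r$ if and only if $r\in T$ and $s\notin T$: if $r=\min T$ then $s<r$ forces $s\notin T$, and conversely $s\notin T$ gives $\yomaindis(f,g)>s$, hence $\yomaindis(f,g)\ge r$, which together with $\yomaindis(f,g)\le r$ from $r\in T$ yields equality. Combining the two translations proves the corollary.

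The only genuinely delicate ingredient is the passage through tenuousness that produces the immediate predecessor $s$ of $r$ in $A$ together with the empty gap $(s,r)\cap A=\emptyset$; this is what converts the clause about $(f\lor g)(x)$ being merely $>s$ into the much stronger $\ge r$, and hence pins the relevant witnesses onto the exact level set where $f^{-1}(r)$ and $g^{-1}(r)$ can disagree. Everything else is bookkeeping with the minimum-threshold description of $\yomaindis$. I would only take care with the boundary case $s=0$, which occurs when $r$ is the least positive value in $A$; the argument is unaffected, since the gap condition and the key implication remain valid verbatim.
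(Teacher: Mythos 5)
Your proof is correct and follows essentially the route the paper intends: the corollary is stated as a direct consequence of Proposition~\ref{prop:maindisprop}, and your argument is exactly that derivation, with the one genuinely needed ingredient --- tenuousness of $\yoimage{f}\cup\yoimage{g}\cup\{0\}$ producing the gap $(s,r)\cap A=\emptyset$ below $r$ --- made explicit (the same device already appears inside the paper's proof of the proposition). The only cosmetic remark is that the backward implication can be done without tenuousness at all (a witness of $f^{-1}(r)\neq g^{-1}(r)$ directly rules out $\yomaindis(f,g)<r$), but your version is equally valid.
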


Now we notice that all members of the difference 
$\yomapsp{X}{R}\setminus \yomaps{X}{R}$
are isolated
by  Proposition \ref{prop:maindisprop}: 
\begin{lem}\label{lem:isolated}
Let $X$ be a compact space, 
and 
$R$ be a range set. 
If $f\in \yomapsp{X}{R}$ satisfies 
$0\not\in \yoimage{f}$ and we put 
$l=\min \yoimage{f}$, then 
$U(f, l)=\{f\}$. 
In particular, 
the point $f$ is isolated in 
the space $\yomapsp{X}{R}$.
\end{lem}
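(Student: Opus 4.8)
The plan is to reduce everything to a single observation: under the hypotheses, the function $f$ is bounded away from $0$, so that $l=\min\yoimage{f}$ is a genuine positive minimum, and then to show directly that no function other than $f$ can lie within $\yomaindis$-distance less than $l$ of $f$.

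First I would establish that $l\in(0,\infty)$. Since $X$ is compact and $f\colon X\to(R,\yodisdis_{R})$ is continuous, Corollary \ref{cor:sp} shows that $\yoimage{f}$ is a tenuous subset of $R$. By definition a tenuous set is either finite or semi-sporadic, and every semi-sporadic set contains $0$; hence the hypothesis $0\notin\yoimage{f}$ forces $\yoimage{f}$ to be \emph{finite}. A finite subset of $(0,\infty)$ has a positive minimum, so $l=\min\yoimage{f}$ is well defined with $l>0$, and crucially $f(x)\ge l$ for every $x\in X$.

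Next I would prove the key inequality: for every $g\in\yomapsp{X}{R}$ with $g\neq f$ we have $\yomaindis(f,g)\ge l$. Using the expression in Proposition \ref{prop:supsup}, write $\yomaindis(f,g)=\sup_{x\in X}\yodisdis_{R}(f(x),g(x))$. Since $g\neq f$, choose $x_{0}\in X$ with $f(x_{0})\neq g(x_{0})$; then $\yodisdis_{R}(f(x_{0}),g(x_{0}))=f(x_{0})\lor g(x_{0})\ge f(x_{0})\ge l$, so the supremum is at least $l$. (Equivalently, one can invoke the minimax characterization in Proposition \ref{prop:maindisprop}: if $f(x_{0})\neq g(x_{0})$ then $(f\lor g)(x_{0})\ge l$ cannot be strictly exceeded by any admissible $t<l$, forcing $\yomaindis(f,g)\ge l$.) Consequently every $g\neq f$ satisfies $g\notin U(f,l)$, while $\yomaindis(f,f)=0<l$ gives $f\in U(f,l)$; therefore $U(f,l)=\{f\}$, and in particular $f$ is isolated in $\yomapsp{X}{R}$. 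Since every $f\in\yomapsp{X}{R}\setminus\yomaps{X}{R}$ satisfies $0\notin\yoimage{f}$, this conclusion applies to all such $f$.

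I do not expect a serious obstacle here, as the argument is essentially a direct estimate. The only substantive point — and the place where the hypotheses genuinely interact — is the step yielding $l>0$: it is precisely the combination of compactness (tenuous image, via Corollary \ref{cor:sp}) with the structural fact that a tenuous set omitting $0$ must be finite that prevents $\yoimage{f}$ from accumulating at $0$ and hence guarantees a positive minimum. Once $f(x)\ge l$ is secured uniformly in $x$, the separation estimate follows immediately from the formula for $\yodisdis_{R}$.
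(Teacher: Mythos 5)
Your argument is correct and is essentially the paper's own: the paper proves this lemma simply by pointing to Proposition \ref{prop:maindisprop}, and your estimate (a disagreement point $x_{0}$ forces $\yomaindis(f,g)\ge f(x_{0})\lor g(x_{0})\ge l$) is exactly the intended one-line deduction, whether phrased via Proposition \ref{prop:supsup} or via Proposition \ref{prop:maindisprop}. Your additional preliminary step — using Corollary \ref{cor:sp} and the fact that a tenuous set omitting $0$ is finite to justify that $\min\yoimage{f}$ exists and is positive — is a worthwhile piece of care that the paper leaves implicit.
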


For a topological  spaces $T$ and for a 
metric space 
$(X, d)$, we denote by 
$\yomapsb{T}{X}$ the set of all 
bounded continuous maps from $T$ to 
$X$. 
Let  $\yobmet_{d}$ denote the supremum metric on 
$\yomapsb{T}{X}$, i.e.,  
$\yobmet_{d}(f, g)=\sup_{x\in X}d(f(x), g(x))$. 

The proof of the  following is 
presented in  
\cite[Theorem 43.6]{MR3728284}. 
\begin{prop}\label{prop:bounded}
Let $T$ be a topological  space 
 and $(X, d)$ be a complete  metric spaces.
Then the space $(\yomapsb{T}{X}, \yobmet_{d})$ is 
complete. 
\end{prop}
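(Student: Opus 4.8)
The plan is to carry out the classical argument showing that the supremum metric on maps into a complete space is complete, passing through pointwise limits and then upgrading to uniform convergence. First I would fix a Cauchy sequence $\{f_n\}_{n}$ in $(\yomapsb{T}{X}, \yobmet_d)$. For each point $x\in T$, the inequality $d(f_n(x), f_m(x))\le \yobmet_d(f_n, f_m)$ shows that $\{f_n(x)\}_n$ is Cauchy in $(X, d)$; since $(X, d)$ is complete, this sequence converges, and I define $f(x)$ to be its limit. This produces a candidate map $f\colon T\to X$, and it remains to show that $f\in \yomapsb{T}{X}$ and that $\yobmet_d(f_n, f)\to 0$.

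Next I would promote pointwise convergence to uniform convergence. Given $\epsilon>0$, choose $N$ with $\yobmet_d(f_n, f_m)<\epsilon$ for all $n, m\ge N$. Then $d(f_n(x), f_m(x))<\epsilon$ for every $x\in T$; fixing $n\ge N$ and letting $m\to \infty$, the continuity of $d$ yields $d(f_n(x), f(x))\le \epsilon$ for all $x\in T$, hence $\yobmet_d(f_n, f)\le \epsilon$. Thus $f_n\to f$ in the supremum metric, which is exactly what completeness requires once we know that $f$ lies in $\yomapsb{T}{X}$.

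Finally I would verify $f\in \yomapsb{T}{X}$. Boundedness is immediate: choosing $n$ with $\yobmet_d(f_n, f)\le 1$ and using that $f_n$ has bounded image, the triangle inequality forces $f$ to have bounded image as well. Continuity of $f$ is the heart of the matter and is established by the standard $\epsilon/3$ argument: to check continuity at a point $x_0\in T$, pick $n$ with $\yobmet_d(f_n, f)<\epsilon/3$, use continuity of $f_n$ to obtain a neighborhood $U$ of $x_0$ on which $d(f_n(x), f_n(x_0))<\epsilon/3$, and combine the three estimates via the triangle inequality to get $d(f(x), f(x_0))<\epsilon$ for $x\in U$.

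The only genuinely delicate step is this last continuity claim (the uniform limit theorem); the existence of the pointwise limit and the uniform convergence both follow formally from completeness of $(X,d)$ and the definition of $\yobmet_d$, so I expect those to be routine.
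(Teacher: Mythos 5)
Your argument is correct and is exactly the standard proof (pointwise limit via completeness of $(X,d)$, upgrade to uniform convergence, then the uniform limit theorem and a boundedness check); the paper does not prove this proposition itself but simply cites it from a textbook, where the same argument appears. Nothing is missing.
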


Since 
Propositions \ref{prop:supsup}
asserts that $(\yomapsp{X}{R}, \yomaindis)$ is nothing but the space 
$(\yomapsb{T}{X}, \yobmet_{\yodisdis_{R}})$, 
Lemma \ref{lem:yodisdiscomp} and Proposition \ref{prop:bounded}
yield the next proposition:
\begin{prop}\label{prop:funccomplete}
Let $X$ be a compact space, 
and $R$ be a range set. 
Then $(\yomapsp{X}{R}, \yomaindis)$ is 
complete. 
\end{prop}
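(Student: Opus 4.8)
The plan is to recognize $(\yomapsp{X}{R}, \yomaindis)$ as a supremum-metric function space into a complete codomain, so that completeness follows at once from the two already-cited black boxes, Lemma~\ref{lem:yodisdiscomp} and Proposition~\ref{prop:bounded}, rather than from a hands-on Cauchy-sequence argument. Concretely, I would show that $(\yomapsp{X}{R}, \yomaindis)$ coincides, as a metric space, with $(\yomapsb{X}{(R, \yodisdis_{R})}, \yobmet_{\yodisdis_{R}})$, the space of bounded continuous maps from the compact space $X$ into the nearly discrete space $(R, \yodisdis_{R})$ equipped with the supremum metric.

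The identification splits into two halves. First, the underlying sets agree: every $f \in \yomapsp{X}{R}$ is a continuous map on a compact space, so its image $\yoimage{f}$ is compact in $(R, \yodisdis_{R})$, hence tenuous by Lemma~\ref{lem:spsp} (equivalently by Corollary~\ref{cor:sp}); a tenuous set is finite or semi-sporadic and therefore bounded, its supremum being its maximum $a_{0}$, so $f$ is bounded and lies in $\yomapsb{X}{(R, \yodisdis_{R})}$. Conversely, any bounded continuous map into $(R, \yodisdis_{R})$ is by definition an element of $\yomapsp{X}{R}$. Second, the two metrics agree verbatim: Proposition~\ref{prop:supsup} gives $\yomaindis(f, g) = \sup_{x \in X} \yodisdis_{R}(f(x), g(x))$, which is exactly $\yobmet_{\yodisdis_{R}}(f, g)$.

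With the identification in hand, I would finish by invoking the completeness of the codomain, Lemma~\ref{lem:yodisdiscomp}, together with Proposition~\ref{prop:bounded}, which asserts that $(\yomapsb{T}{Y}, \yobmet_{d})$ is complete whenever $(Y, d)$ is complete. Taking $T = X$ and $(Y, d) = (R, \yodisdis_{R})$ yields completeness of $(\yomapsb{X}{(R, \yodisdis_{R})}, \yobmet_{\yodisdis_{R}})$, and hence of $(\yomapsp{X}{R}, \yomaindis)$. The only point requiring care --- and it is mild --- is the set-level identification: one must confirm that compactness of $X$ forces every member of $\yomapsp{X}{R}$ to be bounded, so that no continuous map is lost in passing to $\yomapsb{X}{(R, \yodisdis_{R})}$. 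This is precisely where the tenuousness of compact subsets of the nearly discrete space (Lemma~\ref{lem:spsp}) enters, and it is also what guarantees that a Cauchy limit, produced as a bounded continuous map by Proposition~\ref{prop:bounded}, genuinely lands back in $\yomapsp{X}{R}$.
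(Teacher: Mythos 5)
Your proposal is correct and is essentially the paper's own argument: the paper likewise identifies $(\yomapsp{X}{R}, \yomaindis)$ with $(\yomapsb{X}{R}, \yobmet_{\yodisdis_{R}})$ via Proposition~\ref{prop:supsup} and then invokes Lemma~\ref{lem:yodisdiscomp} and Proposition~\ref{prop:bounded}. Your extra remark that compactness of $X$ forces boundedness of the images (via tenuousness) is a sensible justification of the set-level identification that the paper leaves implicit.
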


\begin{cor}\label{cor:comp00}
Let $X$ be a compact space, 
and $R$ be a range set. 
Then $(\yomaps{X}{S}, \yomaindis)$ is 
complete. 
\end{cor}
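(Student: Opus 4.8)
The plan is to realize $(\yomaps{X}{R}, \yomaindis)$ as a closed subspace of the ambient space $(\yomapsp{X}{R}, \yomaindis)$ and then invoke the standard fact that a closed subset of a complete metric space is itself complete. Since Proposition \ref{prop:funccomplete} already establishes that $(\yomapsp{X}{R}, \yomaindis)$ is complete, the entire task reduces to verifying that $\yomaps{X}{R}$ is closed in $\yomapsp{X}{R}$.

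To prove closedness, I would instead show that the complement $\yomapsp{X}{R}\setminus \yomaps{X}{R}$ is open. By definition this complement consists of exactly those $f\in \yomapsp{X}{R}$ with $0\notin \yoimage{f}$. For any such $f$, Lemma \ref{lem:isolated} tells us that, setting $l=\min \yoimage{f}$, the open ball $U(f, l)$ equals the singleton $\{f\}$; in particular $f$ is isolated in $\yomapsp{X}{R}$. Since this singleton is trivially contained in the complement, every point of the complement is interior to it, so the complement is open and hence $\yomaps{X}{R}$ is closed. It is worth recalling why $l$ is well defined and positive: by Corollary \ref{cor:sp} the image $\yoimage{f}$ is tenuous, so when $0\notin \yoimage{f}$ it cannot be semi-sporadic and must therefore be finite and contained in $(0, \infty)$, which guarantees that $l=\min \yoimage{f}$ exists and satisfies $l>0$. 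This is precisely the content already packaged in Lemma \ref{lem:isolated}, so I would simply cite it.

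Combining the two observations finishes the argument: $\yomaps{X}{R}$ is a closed subset of the complete space $(\yomapsp{X}{R}, \yomaindis)$, hence complete. There is essentially no obstacle here—the only substantive input is Lemma \ref{lem:isolated}, whose force is that the membership condition defining $\yomaps{X}{R}$ (namely $0\in \yoimage{f}$) can only fail at isolated points of $\yomapsp{X}{R}$, making the omitted set open for free. The rest is the routine closed-subspace-of-a-complete-space principle.
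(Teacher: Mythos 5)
Your argument is correct and is essentially identical to the paper's own proof: both show that the set of $f$ with $0\notin \yoimage{f}$ is open via Lemma \ref{lem:isolated}, conclude that $\yomaps{X}{R}$ is closed in $\yomapsp{X}{R}$, and then invoke the completeness of $\yomapsp{X}{R}$ from Proposition \ref{prop:funccomplete}. The extra remark justifying why $l=\min\yoimage{f}$ exists and is positive is a harmless elaboration of what Lemma \ref{lem:isolated} already encodes.
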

\begin{proof}
We denote by $I$ the set of all 
$f\in \yomapsp{X}{R}$ 
such that $0\not\in \yoimage{f}$. 
Then we have $\yomaps{X}{R}=
\yomapsp{X}{R}\setminus I$. 
According to 
 Lemma \ref{lem:isolated},  the set 
 $I$ is open, and hence 
the set $\yomaps{X}{R}$ is closed in 
$\yomapsp{X}{R}$. 
Since $\yomapsp{X}{R}$ is complete (see 
Proposition \ref{prop:funccomplete}), 
so is $\yomaps{X}{R}$. 
\end{proof}

\subsubsection{Topological weights of function spaces}\label{subsec:funcweight}
In this subsection, 
we determine the topological weight of 
$(\yomaps{X}{R}, \yomaindis)$.

By the definition of $\yomaps{X}{R}$, we conclude:
\begin{lem}\label{lem:subspace}
If $X$ is a topological space and  range sets $R$ and $S$ satisfy $S\yosub R$, 
then
$\yomaps{X}{S}$ is a 
metric subspace of 
$\yomaps{X}{R}$. 
\end{lem}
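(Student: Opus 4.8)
The plan is to verify the two conditions packaged into the phrase ``metric subspace'': first that $\yomaps{X}{S}\yosub \yomaps{X}{R}$ as sets, and second that the ultrametric $\yomaindis$ on $\yomaps{X}{R}$ restricts to the ultrametric $\yomaindis$ on $\yomaps{X}{S}$. The single observation that drives both parts is that, for $x,y\in S$, the defining formula of the nearly discrete metric gives $\yodisdis_{S}(x,y)=\yodisdis_{R}(x,y)$ verbatim, since that formula depends only on $x$ and $y$ and never on the ambient range set. Hence the inclusion $(S,\yodisdis_{S})\hookrightarrow (R,\yodisdis_{R})$ is an isometric embedding, and in particular a topological embedding; equivalently, the nearly discrete topology on $S$ coincides with the subspace topology inherited from $(R,\yodisdis_{R})$.

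For the set inclusion I would take $f\in \yomaps{X}{S}$, so that $f\colon X\to S$ is continuous for $\yodisdis_{S}$ and $0\in \yoimage{f}$. Composing with the topological embedding above shows that the same assignment, regarded now as a map $X\to R$ with image contained in $S$, is continuous for $\yodisdis_{R}$. Since the condition $0\in \yoimage{f}$ is unaffected by enlarging the codomain, we get $f\in \yomaps{X}{R}$, which yields $\yomaps{X}{S}\yosub \yomaps{X}{R}$.

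For the metric I would appeal directly to the definition of $\yomaindis$ rather than to Proposition \ref{prop:supsup}, since the present lemma assumes only that $X$ is a topological space, not that it is compact; this is the one point that needs care. For $f,g\in \yomaps{X}{S}$ the number $\yomaindis(f,g)$ is the infimum of those $\epsilon\in(0,\infty)$ for which $f(x)\le g(x)\lor \epsilon$ and $g(x)\le f(x)\lor \epsilon$ hold for every $x\in X$. Both the index set $(0,\infty)$ and the defining inequalities refer only to the real numbers $f(x)$ and $g(x)$, and so are literally identical whether $f$ and $g$ are read in $\yomaps{X}{S}$ or in $\yomaps{X}{R}$; thus the two infima coincide and $\yomaindis$ restricts as required. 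No genuine obstacle arises beyond this bookkeeping: the content is entirely that the nearly discrete metric and the ultrametric $\yomaindis$ are both defined without reference to the chosen range set.
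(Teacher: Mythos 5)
Your proposal is correct and matches the paper's (unstated) reasoning: the paper simply asserts the lemma ``by the definition of $\yomaps{X}{R}$,'' and your write-up spells out exactly that observation, namely that $\yodisdis_{S}$ is the restriction of $\yodisdis_{R}$ and that the formula for $\yomaindis$ never refers to the ambient range set. Your remark that one should use the raw definition of $\yomaindis$ rather than Proposition \ref{prop:supsup} (since $X$ is not assumed compact here) is a sensible precaution but does not change the substance.
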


For a set $E$, 
we denote by 
$\yoseqsp{E}$
the set of all finite sequence in $E$. 
Notice that if $E$ is infinite, 
then we have $\card(\yoseqsp{E})=\card(E)$.

\begin{prop}\label{prop:mapscard}
Let  
 $X$ be a $0$-dimensional 
 compact Hausdorff  space
 such that 
 $\yoweight{X}=$ is infinite, 
and $R$ be a 
non-characteristic range set with $2\le \card(R)$. 
Then 
$\card(\yomaps{X}{R})=\max\{\yoweight{X}, \card(R)\}$. 
\end{prop}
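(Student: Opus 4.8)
The plan is to use the hypothesis that $R$ is non-characteristic to force every element of $\yomaps{X}{R}$ to carry only a finite amount of data, and then to finish with a routine cardinal count. First I would observe that non-characteristicity means precisely that $\inf(R\setminus\{0\})>0$, so by the formula $U(0,t;\yodisdis_{R})=[0,t)\cap R$ there is some $t>0$ with $U(0,t;\yodisdis_{R})=\{0\}$; combined with \ref{item:nd:1} of Lemma \ref{lem:nearlydis}, this shows that the nearly discrete space $(R,\yodisdis_{R})$ is discrete. Since $X$ is compact, the image $\yoimage{f}$ of any $f\in\yomaps{X}{R}$ is compact in a discrete space, hence finite. (Equivalently, by Corollary \ref{cor:sp} the image is tenuous, and a tenuous subset of a non-characteristic $R$ cannot be semi-sporadic, since a decreasing null sequence cannot lie in $R\setminus\{0\}$; so it is finite.) Consequently each $f$ is completely determined by the finite family of clopen sets $\{f^{-1}(r)\}_{r\in\yoimage{f}}$ together with its labels, that is, by the finite subset $\{(f^{-1}(r),r):r\in\yoimage{f}\}$ of $\mathcal{C}\times R$, where $\mathcal{C}$ denotes the set of all clopen subsets of $X$.

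For the upper bound I would use that the assignment $f\mapsto\{(f^{-1}(r),r):r\in\yoimage{f}\}$ is injective from $\yomaps{X}{R}$ into the set of finite subsets of $\mathcal{C}\times R$, since one recovers $f$ from these pairs. By Lemma \ref{lem:22-22} we have $\card(\mathcal{C})=\yoweight{X}$, which is infinite, so $\card(\mathcal{C}\times R)=\max\{\yoweight{X},\card(R)\}=:\kappa$ is infinite; the number of finite subsets of an infinite set of cardinality $\kappa$ is again $\kappa$ (just as $\card(\yoseqsp{E})=\card(E)$ for infinite $E$). Hence $\card(\yomaps{X}{R})\le\kappa$.

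For the reverse inequality I would exhibit two injections. Fix $s\in R\setminus\{0\}$, which exists because $2\le\card(R)$. For each clopen $C\subsetneq X$ let $f_{C}$ be the map taking the value $s$ on $C$ and $0$ on $X\setminus C$; then $f_{C}\in\yomaps{X}{R}$ because $C\neq X$ forces $0\in\yoimage{f_{C}}$, and $C\mapsto f_{C}$ is injective since $C=f_{C}^{-1}(s)$. As $\mathcal{C}$ is infinite, this gives $\card(\yomaps{X}{R})\ge\card(\mathcal{C})=\yoweight{X}$. Next, $\yoweight{X}$ being infinite forces $X$ to have at least two points, so by $0$-dimensionality there is a clopen set $C_{0}$ with $\emptyset\neq C_{0}\subsetneq X$; for $r\in R$ let $g_{r}$ be the map equal to $r$ on $C_{0}$ and $0$ on $X\setminus C_{0}$. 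Each $g_{r}$ lies in $\yomaps{X}{R}$ and $r\mapsto g_{r}$ is injective (as $C_0\neq\emptyset$), so $\card(\yomaps{X}{R})\ge\card(R)$. Combining the two estimates yields $\card(\yomaps{X}{R})\ge\kappa$, and with the upper bound the proposition follows.

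The only genuine content lies in the first step: recognizing that non-characteristicity collapses the nearly discrete topology to the discrete one and thereby forces \emph{finite} images. Once this finiteness is in hand, the problem becomes pure bookkeeping over finite clopen-labeled partitions. I expect this reduction to be the main obstacle, and it is exactly where the hypothesis is needed, since for characteristic $R$ the images could be infinite tenuous sets and the cardinality would in general be strictly larger than $\max\{\yoweight{X},\card(R)\}$.
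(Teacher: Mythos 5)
Your proof is correct and follows essentially the same route as the paper: finiteness of images (which you justify more explicitly, via discreteness of the non-characteristic nearly discrete space, where the paper merely asserts it), an injection into finite clopen-labeled data for the upper bound, and the two-parameter family of two-valued step functions for the lower bound. Your only deviation is cosmetic — you split the lower bound into two injections and exclude $C=X$ and $C=\emptyset$, which tidies a small edge case the paper's single family $h_{O,r}$ glosses over.
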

\begin{proof}
Notice that since $R$ is non-characteristic, 
for all $f\in \yomaps{X}{R}$ the 
image $\yoimage{f}$ is a finite subset of $R$. 
We denote by $\mathcal{O}$
the set of all clopen subsets of $X$. 
Remark that 
$\card(\mathcal{O})=\yoweight{X}$ 
(see 
Lemma \ref{lem:22-22}). 
Put $\tau=\max\{\yoweight{X}, \card(R)\}$. 
We first prove 
$\card(\yomaps{X}{R})\le \tau$. 
For each $f\in \yomaps{X}{R}$, 
put $f(X)=\{0\}\cup 
\{\, a_{f, i}\mid i\in A_{f}\, \}$, 
where 
$A_{f}=\{0, \dots, n\}$ 
for some $n\in \zz_{\ge 0}$. 
We  define 
$K\colon \yomaps{X}{R}\to 
\yoseqsp{\mathcal{O}}\times 
\yoseqsp{R}$ by 
$K(f)=(\{f^{-1}(a_{f, i})\}_{i\in A_{f}}, \{a_{f, i}\}_{i\in A_{f}})$. 
Then $K$ is injective, and hence 
$\card(\yomaps{X}{R})\le 
\max\{\yoweight{X}, \card(\yoseqsp{R})\}= \tau$. 
We next show the converse inequality. 
For each $O\in \mathcal{O}$ and $r\in R\setminus\{0\}$, 
we define $h_{O, r}\in \yomaps{X}{R}$ by 
\[
h_{O, r}(x)=
\begin{cases}
r & \text{if $x\in O$;}\\
0 & \text{if $x\not \in O$.}
\end{cases}
\]
In this situation,  if $O, O^{\prime}\in \mathcal{O}$ and 
$r, r^{\prime}\in R\setminus \{0\}$ satisfy 
$(O, r)\neq (O^{\prime}, r^{\prime})$, then 
we have $h_{O, r}\neq h_{O^{\prime}, r^{\prime}}$. 
Thus we obtain 
$\yoweight{X}\times \card(R)\le \card(\yomaps{X}{R})$. 
Since $\yoweight{X}\times \card(R)=\max\{\yoweight{X}, \card(R)\}$, 
the proof is finished. 
\end{proof}

\begin{lem}\label{lem:finiteimage}
Let 
$X$ be a
$0$-dimensional  compact Hausdorff space
such that $\yoweight{X}$ is infinite, 
and let $R$ be a range set with $2\le \card(R)$,
Then 
the set $F$ of all $f\in \yomaps{X}{R}$ whose images are finite 
is dense in 
$(\yomaps{X}{R}, \yomaindis)$ and 
satisfies 
$\card(F)\le \max\{\yoweight{X}, \card(R)\}$. 
\end{lem}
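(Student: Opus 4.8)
The plan is to establish the two assertions separately: the density of $F$ by a truncation argument, and the cardinality estimate by reusing the injection built in the proof of Proposition \ref{prop:mapscard}.

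For density, fix $f \in \yomaps{X}{R}$ and a real number $\epsilon \in (0, \infty)$; the goal is to produce $g \in F$ with $\yomaindis(f, g) < \epsilon$. By Corollary \ref{cor:sp} the image $\yoimage{f}$ is tenuous, so Lemma \ref{lem:discom} guarantees that $\yoimage{f} \cap [\epsilon, \infty)$ is finite, say equal to $\{r_1, \dots, r_k\}$. By part \ref{item:nd:1} of Lemma \ref{lem:nearlydis} each singleton $\{r_j\}$ is clopen in the nearly discrete space, so each fiber $f^{-1}(r_j)$ is clopen in $X$. I would then define $g$ to equal $r_j$ on $f^{-1}(r_j)$ for $j = 1, \dots, k$ and $0$ on the complementary clopen set. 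Having finite image and clopen fibers, $g$ is continuous into $(R, \yodisdis_{R})$; moreover $0 \in \yoimage{f}$ furnishes a point $x_0$ with $f(x_0) = 0$, so that $g(x_0) = 0$ and hence $0 \in \yoimage{g}$, giving $g \in F$.

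To estimate the distance, I would invoke Proposition \ref{prop:supsup}: $\yomaindis(f, g) = \sup_{x \in X} \yodisdis_{R}(f(x), g(x))$. On each fiber $f^{-1}(r_j)$ we have $f = g$, so the contribution is $0$; off these fibers $g(x) = 0$ and $f(x) < \epsilon$, so the contribution equals $f(x)$. The crux is the strict inequality $\sup(\yoimage{f} \cap (0, \epsilon)) < \epsilon$, which is exactly where tenuousness (not merely finiteness of $\yoimage{f} \cap [\epsilon, \infty)$) is used: the elements of the tenuous set $\yoimage{f}$ lying in $(0, \epsilon)$ can accumulate only at $0$, so their supremum is attained and is strictly below $\epsilon$. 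This yields $\yomaindis(f, g) < \epsilon$ and proves density. I expect this strict inequality to be the main (if modest) obstacle, since it is the point at which the defining property of tenuous sets is genuinely needed.

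For the cardinality bound I would reuse verbatim the injection $K$ constructed in the proof of Proposition \ref{prop:mapscard}. Every $f \in F$ has finite image, which I write as $\{0\} \cup \{a_{f, i} \mid i \in A_f\}$ with $A_f$ finite, and $K(f) = (\{f^{-1}(a_{f, i})\}_{i \in A_f}, \{a_{f, i}\}_{i \in A_f})$ lies in $\yoseqsp{\mathcal{O}} \times \yoseqsp{R}$, where $\mathcal{O}$ denotes the set of clopen subsets of $X$. The map $K$ is injective because $f$ is recovered from its fibers and values. By Lemma \ref{lem:22-22}, $\card(\mathcal{O}) = \yoweight{X}$, which is infinite, so $\card(\yoseqsp{\mathcal{O}}) = \yoweight{X}$, while $\card(\yoseqsp{R}) = \max\{\card(R), \aleph_0\}$. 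Since $\yoweight{X} \geq \aleph_0$, combining these gives $\card(F) \le \max\{\yoweight{X}, \card(R)\}$, completing the argument.
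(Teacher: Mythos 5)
Your proof is correct and fills in exactly the details the paper's two-line proof leaves implicit: the density argument is the truncation of $f$ below $\epsilon$ that the paper's citation of Proposition \ref{prop:maindisprop} is meant to encode (your route via Proposition \ref{prop:supsup} is equivalent), and the strict inequality you single out is indeed secured by the fact that a tenuous set meets $[0,\epsilon)$ in a set admitting a maximum. For the cardinality bound you reuse the injection $K$ from Proposition \ref{prop:mapscard}, which is a clean and valid route; the paper instead cites Lemma \ref{lem:leweight}, but the resulting count is the same.
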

\begin{proof}
The denseness of $F$ follows from 
Proposition \ref{prop:maindisprop}. 
The  inequality
$\card(F)\le \max\{\yoweight{X}, \card(R)\}$
follows from 
Lemma \ref{lem:leweight}. 
\end{proof}

\begin{prop}\label{prop:mapsweight}
Let 
$X$ be a
$0$-dimensional  compact Hausdorff space
such that  $\yoweight{X}$ is infintie, 
and 
let $R$ be a range set with 
$2\le \card(R)$.  
Then we have 
$\yoweight{\yomaps{X}{R}}=
\max\{\yoweight{X}, \card(R)\}$. 
\end{prop}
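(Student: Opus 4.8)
The plan is to establish the two inequalities $\yoweight{\yomaps{X}{R}}\le \max\{\yoweight{X}, \card(R)\}$ and $\max\{\yoweight{X}, \card(R)\}\le \yoweight{\yomaps{X}{R}}$ separately. The first is immediate from the material already developed: by Lemma \ref{lem:finiteimage} the set $F$ of all finite-image maps is dense in $(\yomaps{X}{R}, \yomaindis)$ and satisfies $\card(F)\le \max\{\yoweight{X}, \card(R)\}$, so Lemma \ref{lem:weightle} (the weight is bounded by the cardinality of any dense set) gives $\yoweight{\yomaps{X}{R}}\le \card(F)\le \max\{\yoweight{X}, \card(R)\}$. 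Thus the real content is the reverse inequality, which I would prove by exhibiting separated subsets of $\yomaps{X}{R}$ of the required cardinalities and invoking Lemma \ref{lem:leweight} (a separated set has cardinality at most the weight). Throughout I use the maps $h_{O, r}$ from the proof of Proposition \ref{prop:mapscard}: for a clopen set $O\subsetneq X$ and $r\in R\setminus\{0\}$, let $h_{O, r}$ take the value $r$ on $O$ and $0$ off $O$; these lie in $\yomaps{X}{R}$, and by Proposition \ref{prop:supsup} their $\yomaindis$-distances are easy to read off.

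To get $\yoweight{X}\le \yoweight{\yomaps{X}{R}}$, I would fix a single $r\in R\setminus\{0\}$ (available since $\card(R)\ge 2$) and vary $O$ over the clopen subsets of $X$ distinct from $X$. For $O\neq O'$ the maps $h_{O, r}$ and $h_{O', r}$ disagree (with values $r$ and $0$) exactly on the nonempty set $(O\setminus O')\cup(O'\setminus O)$, so Proposition \ref{prop:supsup} gives $\yomaindis(h_{O, r}, h_{O', r})=r$. Hence $\{\, h_{O, r}\mid O\subsetneq X \text{ clopen}\,\}$ is an $r$-separated set. Since $\yoweight{X}$ is infinite, the number of clopen sets different from $X$ equals the number of all clopen sets, which is $\yoweight{X}$ by Lemma \ref{lem:22-22}; applying Lemma \ref{lem:leweight} yields $\yoweight{X}\le \yoweight{\yomaps{X}{R}}$.

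For $\card(R)\le \yoweight{\yomaps{X}{R}}$ I would instead fix one nonempty proper clopen set $O$ (which exists because $X$, being infinite, $0$-dimensional, and Hausdorff, has a clopen set separating two of its points) and vary $r$. For distinct $r, r'$ one has $\yomaindis(h_{O, r}, h_{O, r'})=r\lor r'$, so for each $n\ge 1$ the family $\{\, h_{O, r}\mid r\in R,\ r\ge 1/n\,\}$ is $(1/n)$-separated and therefore has cardinality at most $\kappa:=\yoweight{\yomaps{X}{R}}$ by Lemma \ref{lem:leweight}. From the previous paragraph $\kappa\ge \yoweight{X}\ge\aleph_{0}$, so $\kappa$ is infinite, and since $R\setminus\{0\}=\bigcup_{n\ge 1}\{\, r\in R\mid r\ge 1/n\,\}$ is a countable union of sets of cardinality $\le\kappa$, we get $\card(R\setminus\{0\})\le\aleph_{0}\cdot\kappa=\kappa$, hence $\card(R)\le\kappa$. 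Combining the two lower bounds with the upper bound finishes the proof. The one genuine subtlety — and the step I expect to require the most care — is precisely this last point: because $R$ may accumulate at $0$, there need not be any single uniformly separated family of maps capturing all of $R$, so one must stratify $R$ by the sets $[1/n,\infty)$ and absorb the resulting countable union using the infiniteness of $\kappa$.
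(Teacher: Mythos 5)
Your proof is correct. The upper bound and the inequality $\yoweight{X}\le\yoweight{\yomaps{X}{R}}$ follow the paper exactly: the dense set of finite-image maps together with Lemma \ref{lem:weightle}, and then an $r$-separated family indexed by proper clopen subsets of $X$, which is precisely the set $\yomaps{X}{\{0,r\}}$ that the paper invokes via Proposition \ref{prop:mapscard} and Lemma \ref{lem:leweight}. Where you genuinely diverge is the inequality $\card(R)\le\yoweight{\yomaps{X}{R}}$. The paper splits into the cases $\card(R)\le\aleph_{0}$ and $\aleph_{0}<\card(R)$, and in the latter uses a pigeonhole over a countable partition of $(0,\infty)$ to find an interval $(s,t)$ with $\card(R\cap(s,t))=\card(R)$, producing one uniformly separated family of full cardinality. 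You instead fix a single proper clopen set, stratify $R\setminus\{0\}$ by the sets $[1/n,\infty)$, bound each stratum by the weight via Lemma \ref{lem:leweight}, and absorb the countable union using the already-established fact that the weight is infinite. Your route needs no case split and is in fact more robust: the paper's pigeonhole step only guarantees an interval carrying the full cardinality of $R$ when that cardinal has uncountable cofinality (if, say, $R\cap[1/(n+1),1/n)$ has cardinality $\aleph_{n}$, then $\card(R)=\aleph_{\omega}$ yet every interval $(s,t)$ with $s>0$ meets $R$ in a strictly smaller set), whereas your countable-union argument handles such range sets without further comment. You correctly identified the accumulation of $R$ at $0$ as the only delicate point, and your treatment of it is the cleaner of the two.
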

\begin{proof}
Put $\tau =\max\{\yoweight{X}, \card(R)\}$. 
Lemmas 
 \ref{lem:weightle} and  \ref{lem:finiteimage}
 yield the inequality
$\yoweight{\yomaps{X}{R}}\le \tau$. 
To prove the converse inequality, 
we divide the proof into two cases. 

Case 1. [$\card(R)\le \aleph_{0}$]: 
Take $r\in R\setminus \{0\}$. 
Since the set $\yomaps{X}{\{0, r\}}$ is 
$r$-separated set of $\yomaps{X}{R}$ 
(see Lemma \ref{lem:subspace}). 
Owing  to the fact that   $\card(\yomaps{X}{\{0, r\}})
=\yoweight{X} =\tau$ (see Proposition \ref{prop:mapscard})
 and 
Lemma \ref{lem:leweight}, 
we see that $\tau\le \yoweight{\yomaps{X}{R}}$, 

Case 2. [$\aleph_{0}<\card(R)$]:
Using  a  partition of $(0, \infty)$ by mutually disjoint  countable intervals, 
the assumption that $\aleph_{0}<\card(R)$ guarantees 
the 
existence of
 $s,t\in (0, \infty)$ such that 
$\card(R\cap (s, t))=\card(R)$. 
Considering the subset $\yomaps{X}{R\cap (s, t)}$, 
as done in Case 1, we
 obtain $\tau\le \yoweight{\yomaps{X}{R}}$. 
\end{proof}

Similarly to Proposition \ref{prop:mapsweight}, 
we have:
\begin{prop}\label{prop:funcballweight}
Let 
$X$ be a $0$-dimensional compact 
Hausdorff space
such that  $\yoweight{X}$ is infinite, 
and
$R$ be a range set and 
$r\in R\setminus \{0\}$. 
If every clopen subset $G$ of $X$ satisfies 
$\yoweight{G}=\yoweight{X}$, 
and either of 
$\card(R)\le \yoweight{X}$ or 
$\card(R\cap [0, r])=\card(R)$
is 
 satisfied, 
then 
for all $f\in \yomaps{X}{R}$, 
we have 
$\yoweight{B(f, r)}=\max\{\yoweight{X}, \card(R)\}$. 
\end{prop}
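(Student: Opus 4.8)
The plan is to reduce the ball $B(f,r)$ in $(\yomaps{X}{R},\yomaindis)$ to a full function space of the same form over a clopen piece of $X$, and then invoke Proposition \ref{prop:mapsweight}. Write $\tau=\max\{\yoweight{X},\card(R)\}$. Since $f$ is continuous into the nearly discrete space and, by \ref{item:nd:1} in Lemma \ref{lem:nearlydis}, every nonzero value is isolated, each level set $f^{-1}(a)$ with $a>r$ is clopen in $X$; as $\yoimage{f}$ is tenuous (Corollary \ref{cor:sp}) there are only finitely many such values, so $W=\{\,x\in X\mid f(x)>r\,\}$ is clopen. Put $Y=X\setminus W$ and $R'=R\cap[0,r]$. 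Then $Y$ is a nonempty (it contains $f^{-1}(0)$) clopen, hence $0$-dimensional compact Hausdorff, subspace of $X$, and $R'$ is a range set with $2\le\card(R')$ because $0,r\in R'$.

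The core step is the isometric identification of $B(f,r)$ with $(\yomaps{Y}{R'},\yomaindis)$ via restriction $g\mapsto g|_{Y}$. First I would use Proposition \ref{prop:maindisprop} to characterize membership: the inequality $\yomaindis(f,g)\le r$ holds exactly when $f(x)=g(x)$ for every $x$ with $r<(f\lor g)(x)$, which forces $g=f$ on $W$ and $g\le r$ (hence $g(x)\in R'$) on $Y$; conversely any such $g$ lies in $B(f,r)$. Since $f(W)\yosub(r,\infty)$, the value $0$ of any $g\in\yomaps{X}{R}$ is attained on $Y$, so $g|_{Y}\in\yomaps{Y}{R'}$, and the map is well defined and injective. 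For surjectivity, given $h\in\yomaps{Y}{R'}$ I would glue $h$ on $Y$ to $f$ on $W$; the result is continuous because $Y$ and $W$ are clopen, lies in $\yomaps{X}{R}$ (it attains $0$ on $Y$), and belongs to $B(f,r)$ by the characterization. Finally, restriction is isometric: by Proposition \ref{prop:supsup} the distance is the supremum of $\yodisdis_{R}(g_{1}(x),g_{2}(x))$ over $x\in X$, and on $W$ both functions equal $f$, so this supremum is realized over $Y$ alone and matches $\yomaindis(g_{1}|_{Y},g_{2}|_{Y})$ computed in $\yomaps{Y}{R'}$.

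Granting the identification, the weight is an isometric invariant, so $\yoweight{B(f,r)}=\yoweight{\yomaps{Y}{R'}}$. Since $\yoweight{Y}=\yoweight{X}$ by hypothesis (every clopen subset has full weight) and is infinite, Proposition \ref{prop:mapsweight} gives $\yoweight{\yomaps{Y}{R'}}=\max\{\yoweight{X},\card(R\cap[0,r])\}$. It then remains to reconcile this with $\tau$ using the dichotomy: if $\card(R)\le\yoweight{X}$ then both maxima equal $\yoweight{X}$, whereas if $\card(R\cap[0,r])=\card(R)$ the two maxima coincide termwise. I expect the main obstacle to be the careful verification of the isometry, in particular the continuity of the glued function (handled by the clopenness of $Y$ and $W$) and the fact that restriction preserves distances because the two functions agree on $W$; the concluding case analysis is routine.
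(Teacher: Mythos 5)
Your argument is correct. The paper gives no written proof here --- it only says the claim follows ``similarly to Proposition~\ref{prop:mapsweight}'', i.e.\ by re-running the two-sided weight estimate inside the ball: the upper bound from the dense set of finite-image functions (Lemmas~\ref{lem:weightle} and~\ref{lem:finiteimage}) and the lower bound from an $r'$-separated family of perturbations $h_{O,s}$ of $f$ supported on clopen subsets of $f^{-1}([0,r])$, which is where the hypothesis $\yoweight{G}=\yoweight{X}$ for clopen $G$ enters. You instead prove the exact isometry $B(f,r)\cong(\yomaps{Y}{R\cap[0,r]},\yomaindis)$ with $Y=X\setminus f^{-1}((r,\infty))$ and then quote Proposition~\ref{prop:mapsweight} once. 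All the steps check out: $W=f^{-1}((r,\infty))$ is clopen because $\yoimage{f}$ is tenuous and nonzero values are isolated in $(R,\yodisdis_R)$; the membership criterion $\yomaindis(f,g)\le r\iff g|_W=f|_W$ and $g(Y)\yosub[0,r]$ follows from Proposition~\ref{prop:maindisprop}; $Y\neq\emptyset$ since $f^{-1}(0)\yosub Y$, so $g|_Y$ attains $0$; and the restriction map is isometric by Proposition~\ref{prop:supsup} because the two functions agree on $W$. The concluding dichotomy correctly reconciles $\max\{\yoweight{X},\card(R\cap[0,r])\}$ with $\max\{\yoweight{X},\card(R)\}$. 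Your route buys something the intended argument does not: it identifies the metric type of the ball, not just its weight, and it localizes all use of the hypotheses transparently ($\yoweight{Y}=\yoweight{X}$ from the clopen-weight assumption, and the cardinality dichotomy only at the very end). The only cost is that you rely on Proposition~\ref{prop:mapsweight} applied to the subspace $Y$ and the smaller range set $R\cap[0,r]$, whose hypotheses ($Y$ compact Hausdorff $0$-dimensional of infinite weight, $2\le\card(R\cap[0,r])$) you do verify.
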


\subsection{Spaces of ultrametrics}\label{subsec:spultra}
We next consider the  ultrametric space 
$(\yocmet{X}{R}, \umetdis_{X}^{R})$. 

\subsubsection{Metric properties}
First we observe that 
ultrametrics are naturally continuous with respect to 
the nearly discrete topology. 
\begin{prop}\label{prop:spectral}
Let
 $X$ be a topological  space, 
 $R$ be a range set, 
and $d\in \yocmet{X}{R}$. 
Then
$d\colon X^{2}\to R$
is continuous with respect to $\yodisdis_{R}$. 
In particular, 
we have 
$\yocmet{X}{R}\yosub \yomaps{X\times X}{R}$
and $\yomaindis|_{\yocmet{X}{R}}=\umetdis_{X}^{R}$. 
\end{prop}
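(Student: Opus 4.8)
The plan is to verify directly that $d\colon X^2\to R$ is continuous into $(R,\yodisdis_R)$ by checking preimages of a basis, and then to read off the two ``in particular'' assertions from the definitions. Recall from Lemma~\ref{lem:nearlydis} and the open balls computed in its proof that the nearly discrete topology on $R$ has a basis consisting of the singletons $\{r\}$ with $r\in R\setminus\{0\}$ together with the initial segments $U(0,\rho;\yodisdis_R)=[0,\rho)\cap R$ with $\rho\in(0,\infty)$. Thus it suffices to show that $d^{-1}(\{r\})$ and $d^{-1}([0,\rho)\cap R)$ are open in $X^2$. Since $d\in\yocmet{X}{R}$ is by definition continuous into $R$ with the Euclidean topology, and $[0,\rho)\cap R$ is Euclidean-open in $R$, the set $d^{-1}([0,\rho)\cap R)$ is automatically open; this disposes of the neighborhoods of $0$.

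The crux is the preimage of a nonzero singleton, which is where Euclidean continuity alone fails (a point $r>0$ need not be Euclidean-isolated in $R$) and the ultrametric structure must be exploited. Fix $r\in R\setminus\{0\}$ and a point $(x_0,y_0)\in X^2$ with $d(x_0,y_0)=r$. I would consider the open balls $U(x_0,r;d)$ and $U(y_0,r;d)$. Each is open in $X$: the map $x\mapsto d(x,x_0)$ is continuous from $X$ into $R$ with the Euclidean topology, being a restriction of the Euclidean-continuous $d$ along the continuous assignment $x\mapsto(x,x_0)$, so $U(x_0,r;d)=\{x: d(x,x_0)<r\}$ is the preimage of the Euclidean-open set $[0,r)\cap R$. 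On the product $U(x_0,r;d)\times U(y_0,r;d)$ the pseudo-ultrametric $d$ is constantly $r$: for such $x,y$ we have $d(x,x_0)<r$ and $d(y,y_0)<r$, so the strong triangle inequality gives $d(x,y)\le d(x,x_0)\lor d(x_0,y_0)\lor d(y_0,y)=r$, while $r=d(x_0,y_0)\le d(x_0,x)\lor d(x,y)\lor d(y,y_0)$ forces $d(x,y)\ge r$. Hence $(x_0,y_0)$ has an open neighborhood contained in $d^{-1}(\{r\})$; as $(x_0,y_0)$ was arbitrary, $d^{-1}(\{r\})$ is open. This establishes continuity of $d$ with respect to $\yodisdis_R$, and since $d$ is a pseudo-metric we have $d(x,x)=0$, so $0\in\yoimage{d}$ and therefore $d\in\yomaps{X\times X}{R}$; this is the inclusion $\yocmet{X}{R}\yosub\yomaps{X\times X}{R}$.

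Finally I would deduce $\yomaindis|_{\yocmet{X}{R}}=\umetdis_X^R$ from the definitions. Both $\yomaindis(d,e)$ and $\umetdis_X^R(d,e)$ are infima of the set of those $\epsilon$ for which $d\le e\lor\epsilon$ and $e\le d\lor\epsilon$ hold pointwise on $X^2$, the only difference being that $\yomaindis$ lets $\epsilon$ range over $(0,\infty)$ while $\umetdis_X^R$ lets it range over $R$. By Proposition~\ref{prop:supsup} together with Lemma~\ref{lem:disep}, this common condition already holds at $\epsilon=\yomaindis(d,e)$, and by Proposition~\ref{prop:maindisprop} the value $\yomaindis(d,e)$ lies in $R$; hence the infimum over $(0,\infty)$ is attained at a point of $R$ (or equals $0\in R$ when $d=e$), so restricting the range of $\epsilon$ to $R$ leaves it unchanged. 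This yields the claimed equality. I expect the nonzero-singleton step to be the only genuine obstacle, the remainder being definition-chasing.
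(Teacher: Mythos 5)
Your proof is correct, and its engine is the same strong-triangle-inequality computation that drives the paper's argument; the difference is purely in how the continuity is packaged. The paper works pointwise and uniformly: it equips $X^{2}$ with the product pseudo-ultrametric $d\times d$, shows that for $(x,y)$ in the $\epsilon$-ball around $(a,b)$ one has $d(a,b)\le d(x,y)\lor\epsilon$ and $d(x,y)\le d(a,b)\lor\epsilon$, and then invokes Lemma~\ref{lem:disep} to conclude $\yodisdis_{R}(d(x,y),d(a,b))\le\epsilon$ --- one estimate covering all points and all radii at once. You instead exhibit an explicit basis of the nearly discrete topology (nonzero singletons and initial segments $[0,\rho)\cap R$) and check preimages, which splits the proof into a trivial case handled by Euclidean continuity alone and a genuinely non-Archimedean case where $d$ is shown to be \emph{locally constant} at any point where it takes a positive value. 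Your version is slightly longer but isolates exactly where the ultrametric inequality is needed and makes the local constancy phenomenon (essentially Lemma~\ref{lem:balldis}) explicit; it also supplies proofs of the two ``in particular'' claims, including the identification of the two infima via Proposition~\ref{prop:maindisprop}, which the paper leaves to the reader. Both arguments are complete and correct.
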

\begin{proof}
In this proof, 
we consider that 
$X^{2}$ is equipped with the metric  $d\times d$ defined 
 by $(d\times d)((x, y), (a, b))=d(x, a)\lor d(y, b)$. 
Fix    $p=(a, b)\in X^{2}$ and 
$\epsilon \in (0,\infty)$. 
For all $(x, y)\in U(p, \epsilon)$, 
we obtain 
\begin{align*}
d(a, b)&\le d(a, x)\lor d(x, y)\lor d(y, b)\\
&=d(x, y)\lor(d(a, x)\lor d(y, b))\le 
d(x, y)\lor \epsilon. 
\end{align*}
Replacing the role of $(a, b)$ with 
that of $(x, y)$, 
we also obtain 
$d(x, y)\le d(a, b)\lor \epsilon$. 
From Lemma \ref{lem:disep}, 
it follows that 
$\yodisdis_{R}(d(x, y), d(a, b))\le \epsilon$. 
Therefore
$d\colon X^{2}\to R$ 
is continuous with 
respect $\yodisdis_{R}$. 
\end{proof}

\begin{cor}\label{cor:spec}
Let $R$ be a range set, 
 $X$ be a compact  space, 
and $d\in \yocmet{X}{R}$. 
Then the image of $d$ is tenuous. 
\end{cor}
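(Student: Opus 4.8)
The plan is to deduce this immediately from Corollary \ref{cor:sp} by treating $d$ as a single continuous map on a compact domain. First I would observe that since $X$ is compact, the product $X \times X$, endowed with the product topology, is compact as well. Next, I would appeal to Proposition \ref{prop:spectral}, which asserts precisely that any $d \in \yocmet{X}{R}$, regarded as a map $d \colon X \times X \to R$, is continuous with respect to the nearly discrete metric $\yodisdis_{R}$ on the target.

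Having arranged these two ingredients, I would apply Corollary \ref{cor:sp} to the compact space $X \times X$ and the continuous map $f = d$. Its conclusion is exactly that $\yoimage{d}$ is a tenuous subset of $R$, which is the assertion of the corollary. Equivalently, one could bypass Corollary \ref{cor:sp} and argue directly: the continuous image of the compact space $X \times X$ under $d$ is a compact subset of $(R, \yodisdis_{R})$, and by Lemma \ref{lem:spsp} every compact subset of $(R, \yodisdis_{R})$ is tenuous.

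There is essentially no obstacle here; the corollary is a one-line consequence of results already in place. The only conceptual step worth flagging is the reindexing of the two-variable object $d$ as a single-variable continuous function on the compact product $X \times X$, together with the genuinely substantive fact---supplied by Proposition \ref{prop:spectral}---that continuity of $d$ as a pseudo-ultrametric (where the codomain carries the Euclidean topology) upgrades to continuity into the nearly discrete space $(R, \yodisdis_{R})$. Once that identification is made explicit, nothing further is required.
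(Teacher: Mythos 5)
Your argument is exactly the paper's intended one: Corollary \ref{cor:spec} is placed immediately after Proposition \ref{prop:spectral} precisely so that it follows by applying Corollary \ref{cor:sp} (equivalently, Lemma \ref{lem:spsp}) to the continuous map $d\colon X\times X\to (R,\yodisdis_{R})$ on the compact product. The proposal is correct and requires no changes.
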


\begin{rmk}
Corollary \ref{cor:spec} has been  already known, see for example, 
\cite[The statement (1) in Theorem 1.7]{MR3782290}
and \cite[(v) in Proposition 19.2]{MR2444734}. 
Proposition \ref{prop:spectral} gives 
another explanation of this phenomenon on images of ultrametrics on compact spaces. 
Namely, that property can be reduced 
to  the fact that
ultrametrics are continuous with respect to not only 
the Euclidean topology but also the nearly discrete topology.
\end{rmk}

Similarly to Proposition \ref{prop:funccomplete}, 
we obtain:
\begin{prop}\label{prop:spmetcomp}
Let $X$ be a 
compact  space, 
and $R$ be a range set. 
Then the space 
$(\yocmet{X}{R}, \umetdis_{X}^{R})$ is 
complete. 
\end{prop}
\begin{proof}
Take a Cauchy sequence $\{d_{i}\}_{i\in \zz_{\ge 0}}$
in
$(\yocmet{X}{R}, \umetdis_{X}^{R})$. 
By Proposition \ref{prop:funccomplete},   we obtain a limit $d\in \yomaps{X^{2}}{R}$
with respect to $\yomaindis$. 
Since $d$ is also a point-wise limit of $\{d_{i}\}_{i\in \zz_{\ge 0}}$ with respect to the Euclidean topology, 
we conclude that $d$ satisfies the strong triangle inequality and $d(x, y)=d(y, x)$ for all $x, y\in X$. 
This implies that 
$d\in \yocmet{X}{R}$. 
\end{proof}

\begin{df}
Let $(X, d)$ be an ultrametric space, and 
$r\in (0, \infty)$. 
For a point $p\in X$
we denote by 
$\yocutmem{p}{d}{r}$ the 
closed ball $B(p, r; d)$. 
We also denote by 
$\yocutsp{X}{d}{r}$
the set
$\{\, \yocutmem{p}{d}{r}\mid p\in X\, \}$. 
We define a metric
$\yocutdis{d}{r}$
 on $\yocutsp{X}{d}{r}$ by 
$\yocutdis{d}{r}(\yocutmem{p}{d}{r}, \yocutmem{q}{d}{r})=d(p, q)$. 
Lemma \ref{lem:balldis} guarantees the well-definedness of 
$\yocutdis{d}{r}$. 
The space $(\yocutsp{X}{d}{r}, \yocutdis{d}{r})$ 
is an analog of the 
\emph{$t$-closed quotient} of a metric space defined in 
\cite{MR4462868}. 
The symbol ``$\mathfrak{c}$'' involved in the notations stands for the 
``closed''. 
\end{df}

As a consequence of 
Propositions \ref{prop:maindisprop}
 and \ref{prop:spectral}, 
we obtain the next lemma, 
which is an 
analogue of 
\cite[Theorem 5.1]{MR4462868} for spaces of ultrametrics.

\begin{cor}\label{cor:spectrul}
Let $X$ be a $0$-dimensional  compact 
Hausdorff space. 
For all $d, e\in \yocmet{X}{R}$, 
the value $\umetdis_{X}^{R}(d, e)$ is 
equal to 
the minimum $r\in R$ such that 
\begin{enumerate}[label=\textup{(\arabic*)}]
\item 
we have 
$\yocutsp{X}{d}{r}=\yocutsp{X}{e}{r}$. 
Namely,  
$\{B(a, r; d)\mid a\in X\}=\{B(a, r; e)\mid a\in X\}$;
\item 
we have 
$\yocutdis{d}{r}=\yocutdis{e}{r}$. 
\end{enumerate}
Moreover, the ultrametric $\umetdis_{X}^{R}$ is 
$R$-valued. 
\end{cor}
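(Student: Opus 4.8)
The plan is to deduce the statement directly from the two quoted propositions, with the bulk of the work being the translation between the function-space description of $\yomaindis$ and the ball/quotient description. By Proposition~\ref{prop:spectral} we may regard $d$ and $e$ as members of $\yomaps{X^{2}}{R}$, so that $\umetdis_{X}^{R}(d,e)=\yomaindis(d,e)$; write $m$ for this common value. Applying Proposition~\ref{prop:maindisprop} to the pair $d,e$ on the compact space $X^{2}$ shows that $m\in R$ (giving at once that $\umetdis_{X}^{R}$ is $R$-valued) and that $m$ is the least $t\in[0,\infty)$ with the property that $d(x,y)=e(x,y)$ holds for every $(x,y)\in X^{2}$ satisfying $t<d(x,y)\lor e(x,y)$; call this property $(\ast_{t})$. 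Since $(\ast_{t})$ is inherited by every $t'\ge t$, the set of $t$ satisfying it is exactly $[m,\infty)$. It therefore suffices to prove that, for $r\in R$, property $(\ast_{r})$ holds if and only if conditions~(1) and~(2) hold; the minimum over $r\in R$ of those satisfying~(1) and~(2) will then be $m$.

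First I would show $(\ast_{r})\Rightarrow(1),(2)$. For~(1) I fix $a\in X$ and check $B(a,r;d)=B(a,r;e)$: if $x\in B(a,r;d)$ but $x\notin B(a,r;e)$, then $r<e(a,x)=d(a,x)\lor e(a,x)$, so $(\ast_{r})$ forces $d(a,x)=e(a,x)>r$, contradicting $d(a,x)\le r$; the reverse inclusion is symmetric, and equality of all such balls yields the equality of the two ball families in~(1). For~(2), two balls $B(a,r;d)$ and $B(b,r;d)$ are distinct exactly when $d(a,b)>r$ (by Lemma~\ref{lem:ultraopcl}); in that case $r<d(a,b)\lor e(a,b)$, so $(\ast_{r})$ gives $d(a,b)=e(a,b)$. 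By the definition of the quotient metric (well-defined via Lemma~\ref{lem:balldis}), the values $\yocutdis{d}{r}$ and $\yocutdis{e}{r}$ at the corresponding pair of balls are $d(a,b)$ and $e(a,b)$, so they agree on the common quotient set, which is~(2).

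Conversely, assuming~(1) and~(2), I would verify $(\ast_{r})$. Take $(x,y)$ with $r<d(x,y)\lor e(x,y)$, say $d(x,y)>r$ after relabelling. Then $B(x,r;d)\ne B(y,r;d)$, and~(1) identifies these with $B(x,r;e)$ and $B(y,r;e)$, which are consequently distinct, so $e(x,y)>r$ as well. Now~(2) asserts $\yocutdis{d}{r}=\yocutdis{e}{r}$ on the shared quotient, and evaluating at the pair of distinct balls containing $x$ and $y$ (with representatives $x,y$) yields $d(x,y)=e(x,y)$, as required.

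Combining the two implications, $(\ast_{r})$ is equivalent to the conjunction of~(1) and~(2), so the set of $r\in R$ realizing~(1) and~(2) is $[m,\infty)\cap R$, whose minimum is $m=\umetdis_{X}^{R}(d,e)$. I expect the main obstacle to be purely bookkeeping: correctly converting the two inequality clauses of $(\ast_{r})$ into the ball-equality clause~(1) and the quotient-isometry clause~(2), and taking care that Lemma~\ref{lem:balldis} is invoked only for genuinely distinct balls, where the quotient distance is indeed $d(a,b)$ (resp.\ $e(a,b)$). Compactness enters only through Proposition~\ref{prop:maindisprop}, which guarantees that $m$ is attained and lies in $R$.
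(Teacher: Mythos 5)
Your argument is correct and follows exactly the route the paper intends: the paper states Corollary~\ref{cor:spectrul} as a direct consequence of Propositions~\ref{prop:maindisprop} and~\ref{prop:spectral} without writing out the translation, and your proposal supplies precisely that translation (including the careful use of Lemma~\ref{lem:ultraopcl} to pass from equality of ball families to equality of individual balls, and of Lemma~\ref{lem:balldis} only for distinct balls). Nothing is missing.
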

\subsubsection{Results on dense subsets}
We review the  author's  theorem of dense subsets of 
spaces of ultrametrics. 
In particular, 
Theorem \ref{thm:densefrac} is a new result. 
\begin{df}\label{df:quintuple}
For a range set  $R$, 
we say that a quintuple 
$\yoquin=(X, I, r,  \{B_{i}\}_{i\in I}, \{e_{i}\}_{i\in I})$ is 
an \emph{$R$-amalgamation system} if 
the following conditions are satisfied:
\begin{enumerate}
\item the symbol $X$ is a topological space, and 
 $I$ is a set (of indices);
\item we have $r\in \ult{I}{R}$, where we consider that 
$I$ is equipped with the discrete topology; 
\item the family $\{B_{i}\}_{i\in I}$ is a covering of 
$X$ consisting of mutually disjoint clopen subsets of 
$X$;
\item for each $i\in I$, 
we have  $e_{i}\in \yocmet{B_{i}}{R}$. 
\end{enumerate}
Moreover, if the system $\yoquin$ satisfies 
\begin{enumerate}[label=\textup{(Pl)}]
\item for each $i\in I$, we have  $e_{i}\in \ult{B_{i}}{R}$, 
\end{enumerate}
then  the system is said to be 
\emph{pleasant}. 
\end{df}

For a metric space $(X, d)$, and a subset $A$ of $X$, 
we denote by $\yodiam_{d}(A)$ the diameter of 
$A$ with respect to $d$. 
The proof of 
the next proposition is 
essentially 
presented in \cite[Proposition 3.1]{Ishiki2021dense} and \cite[Proposition 2.1]{Ishiki2023disco}. 
\begin{prop}\label{prop:amalgam}
Let 
$X$ be a topological space,  
 $R$ be a range set,
and  $\mathfrak{A}=(X, I, r, \{B_{i}\}_{i\in I}, \{e_{i}\}_{i\in I})$ be an 
$R$-amalgamation system. 
Define a function  $D:X^2\to [0, \infty)$ by 
\begin{align}\label{al:metric}
D(x, y)=
\begin{cases}
e_i(x, y) & \text{ if $x, y\in B_i$;}\\
e_i(x, p_i)\lor r(i, j)\lor e_j(p_j, y)  & \text{if $x\in B_i$ and $y\in B_j$.}
\end{cases}
\end{align}
Then $D$ belongs to $\yocmet{X}{R}$. 
Moreover, the following statements are true. 
\begin{enumerate}[label=\textup{(\arabic*)}]
\item\label{item:amal:1}
If $X$ is ultrametrizable and $\mathfrak{A}$ is pleasant, 
then $D\in \ult{X}{R}$. 
\item\label{item:amal:2}
If $d\in \yocmet{X}{R}$ satisfies 
$r(i, j)=d(p_{i}, p_{j})$, and 
if $\epsilon \in (0, \infty)$ satisfies that 
$\yodiam_{d}(B_i)\le \epsilon$
and 
$\yodiam_{e_i}(B_i)\le \epsilon$
for all $i\in I$, 
then 
$\mathcal{UD}_{X}^S(D, d)\le \epsilon$. 
\end{enumerate}
\end{prop}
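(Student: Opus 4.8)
The plan is to verify by hand that $D$ is a continuous $R$-valued pseudo-ultrametric, and then to upgrade this under each of the two extra hypotheses. First I would record that, since $\{B_{i}\}_{i\in I}$ is a partition of $X$ into pairwise disjoint clopen sets, every point lies in a unique block, so the case distinction in the definition of $D$ is unambiguous and $D$ is well defined; here I fix once and for all a basepoint $p_{i}\in B_{i}$ for each $i$, namely the point occurring in the formula. The identities $D(x,x)=0$ and $D(x,y)=D(y,x)$ are then immediate from the corresponding properties of each $e_{i}$ and of $r$ together with the symmetry of $\lor$, and $D$ is $R$-valued because a finite maximum of elements of $R$ again lies in $R$, while $r(i,j)\in R$ since $r\in\ult{I}{R}$.

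The core computation is the strong triangle inequality $D(x,z)\le D(x,y)\lor D(y,z)$, which I would carry out by a case analysis according to the pattern of coincidences among the block indices $i,j,k$ determined by $x\in B_{i}$, $y\in B_{j}$, $z\in B_{k}$. When all three indices agree it reduces to the strong triangle inequality for $e_{i}$; in the mixed cases one inserts the relevant basepoints and uses the strong triangle inequality for the pertinent $e_{i}$ to route a within-block distance through $p_{i}$; and when $i,j,k$ are pairwise distinct the estimate closes because $r(i,k)\le r(i,j)\lor r(j,k)$, i.e.\ the strong triangle inequality for $r$ on $I$. This bookkeeping is the main (though routine) obstacle, each case being a one-line domination of the left-hand maximum by the right-hand one. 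Continuity of $D\colon X^{2}\to R$ into the Euclidean topology is then immediate: the sets $B_{i}\times B_{j}$ form an open partition of $X^{2}$; on $B_{i}\times B_{i}$ we have $D=e_{i}$, which is continuous, and on $B_{i}\times B_{j}$ with $i\ne j$ the function $D$ is the maximum of the constant $r(i,j)$ and the continuous maps $x\mapsto e_{i}(x,p_{i})$ and $y\mapsto e_{j}(p_{j},y)$. Hence $D\in\yocmet{X}{R}$.

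For \ref{item:amal:1}, I would first check that $D$ is a genuine ultrametric: if $D(x,y)=0$ with $x\in B_{i}$, $y\in B_{j}$ and $i\ne j$, then $r(i,j)=0$, contradicting that $r$ is an ultrametric on $I$; so $i=j$ and $e_{i}(x,y)=0$, whence $x=y$ because $e_{i}\in\ult{B_{i}}{R}$ by pleasantness. To see that $D$ generates the topology of $X$, one inclusion is the continuity already established. For the reverse inclusion I would use that $r$ generates the discrete topology on $I$, so that for each $i$ there is $\eta_{i}\in(0,\infty)$ with $r(i,j)\ge\eta_{i}$ for all $j\ne i$; given an open set $V\ni x$ with $x\in B_{i}$, pleasantness supplies $\rho>0$ with $U(x,\rho;e_{i})\yosub V\cap B_{i}$, and then $U(x,\delta;D)\yosub V$ for $\delta=\min\{\rho,\eta_{i}\}$, because any $y$ with $D(x,y)<\delta$ must lie in $B_{i}$ (otherwise $D(x,y)\ge r(i,j)\ge\eta_{i}$) and there satisfies $e_{i}(x,y)=D(x,y)<\rho$.

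Finally, for \ref{item:amal:2} I would reduce the claim $\umetdis_{X}^{R}(D,d)\le\epsilon$, via the identity $\umetdis_{X}^{R}=\yomaindis|_{\yocmet{X}{R}^{2}}$ from Proposition \ref{prop:spectral}, the supremum formula of Proposition \ref{prop:supsup}, and Lemma \ref{lem:disep}, to proving $D(x,y)\le d(x,y)\lor\epsilon$ and $d(x,y)\le D(x,y)\lor\epsilon$ for all $x,y$. When $x,y$ lie in a common block both inequalities follow from $\yodiam_{e_{i}}(B_{i})\le\epsilon$ and $\yodiam_{d}(B_{i})\le\epsilon$. When $x\in B_{i}$, $y\in B_{j}$ with $i\ne j$, I substitute $r(i,j)=d(p_{i},p_{j})$ and bound $e_{i}(x,p_{i}),e_{j}(p_{j},y)\le\epsilon$; then the strong triangle inequality for $d$, combined with $d(x,p_{i}),d(p_{j},y)\le\epsilon$, yields $d(p_{i},p_{j})\le d(x,y)\lor\epsilon$ and $d(x,y)\le r(i,j)\lor\epsilon\le D(x,y)\lor\epsilon$, which together give both desired inequalities. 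I anticipate no real difficulty in this last part beyond the routine manipulation of maxima.
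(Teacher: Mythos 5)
Your proof is correct and is the direct verification one would expect; the paper itself omits the argument and merely cites \cite[Proposition 3.1]{Ishiki2021dense} and \cite[Proposition 2.1]{Ishiki2023disco}, where essentially this same case-by-case check (strong triangle inequality via the block indices, continuity on the open partition $B_{i}\times B_{j}$, and the two-sided estimate against $d$ through the basepoints) is carried out. Your explicit fixing of the basepoints $p_{i}\in B_{i}$, which the stated quintuple omits, is the right reading of the definition.
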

Based on Proposition
\ref{prop:amalgam}, 
the function  defined in \eqref{al:metric} is called the 
\emph{pseudo-ultrametric (ultrametric) associated with $\mathfrak{A}$}.

We next investigate dense subsets of 
spaces of 
continuous pseudo-ultrametrics. 
\begin{prop}\label{prop:yocsetdense}
Let $X$ be a $0$-dimensional 
compact
Hausdorff 
space, 
and 
$R$ be a characteristic  range set. 
Then 
$\ult{X}{R}$ is dense in 
$(\yocmet{X}{R}, \umetdis_{X}^{R})$. 

\end{prop}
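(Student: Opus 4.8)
The plan is to approximate a given $d\in\yocmet{X}{R}$ by genuine ultrametrics manufactured from the amalgamation construction of Proposition \ref{prop:amalgam}. Fix $d\in\yocmet{X}{R}$ and $\epsilon\in(0,\infty)$; it suffices to produce some $D\in\ult{X}{R}$ with $\umetdis_{X}^{R}(D,d)\le\epsilon$. Since $R$ is characteristic, first I would choose $r\in R\setminus\{0\}$ with $r\le\epsilon$; this element $r$ will simultaneously serve as the scale at which I slice $X$ and as the cap that controls the local diameters.

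Next I would cut $X$ at scale $r$. By Lemma \ref{lem:ultraopcl} the closed balls $B(x,r;d)$ are clopen, and any two of them are either equal or disjoint, so the distinct ones form a partition of $X$ into clopen sets; compactness forces this partition to be finite, say $B_{1},\dots,B_{n}$, and I fix representatives $p_{i}\in B_{i}$. The strong triangle inequality gives $\yodiam_{d}(B_{i})\le r$, and if $B_{i}\neq B_{j}$ then $p_{j}\notin B(p_{i},r;d)$, whence $d(p_{i},p_{j})>r>0$.

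Now I would assemble an $R$-amalgamation system $\mathfrak{A}=(X,I,r,\{B_{i}\}_{i\in I},\{e_{i}\}_{i\in I})$ with $I=\{1,\dots,n\}$. On the finite discrete space $I$ I set $r(i,j)=d(p_{i},p_{j})$; by the previous paragraph this is positive off the diagonal, inherits the strong triangle inequality and $R$-valuedness from $d$, and hence lies in $\ult{I}{R}$. Each clopen $B_{i}$ is again a $0$-dimensional compact Hausdorff space, so by Proposition \ref{prop:equivultmet} it admits some $f_{i}\in\ult{B_{i}}{R}$; truncating $f_{i}$ at $r$ (replacing each value by its minimum with $r$) produces $e_{i}\in\ult{B_{i}}{R}$ with $\yodiam_{e_{i}}(B_{i})\le r$, since capping an ultrametric at a positive constant keeps it an ultrametric, leaves the small balls untouched (hence generates the same topology), and stays $R$-valued because $r\in R$. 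As every $e_{i}\in\ult{B_{i}}{R}$, the system $\mathfrak{A}$ is pleasant.

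Finally I would invoke Proposition \ref{prop:amalgam}. Because $X$ is ultrametrizable (Proposition \ref{prop:equivultmet}) and $\mathfrak{A}$ is pleasant, part \ref{item:amal:1} yields that the associated ultrametric $D$ belongs to $\ult{X}{R}$; and because $r(i,j)=d(p_{i},p_{j})$ together with $\yodiam_{d}(B_{i})\le r$ and $\yodiam_{e_{i}}(B_{i})\le r$ for all $i$, part \ref{item:amal:2} gives $\umetdis_{X}^{R}(D,d)\le r\le\epsilon$. Since $\epsilon$ was arbitrary, $\ult{X}{R}$ is dense. The main obstacle is forcing the local pieces $e_{i}$ to meet two competing demands at once, namely to generate the topology of $B_{i}$ (so that $\mathfrak{A}$ is pleasant and $D$ is a true ultrametric) and to have diameter at most $r$ (so that part \ref{item:amal:2} is applicable); the truncation trick, which works precisely because $r$ can be taken inside the characteristic range set $R$, is what reconciles these two requirements.
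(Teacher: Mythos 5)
Your proposal is correct and follows essentially the same route as the paper: partition $X$ into finitely many disjoint clopen $d$-balls, equip each piece with an ultrametric from Proposition \ref{prop:equivultmet}, glue via a pleasant $R$-amalgamation system, and apply both parts of Proposition \ref{prop:amalgam}. The one genuine refinement is your truncation of each $e_{i}$ at $r\in R\setminus\{0\}$ to guarantee $\yodiam_{e_{i}}(B_{i})\le r$, a hypothesis of part (2) of Proposition \ref{prop:amalgam} that the paper's proof leaves implicit when it simply takes $e_{i}\in\ult{B(p_{i},\epsilon;d)}{R}$.
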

\begin{proof}
Take  $d\in \yocmet{X}{R}$ and 
$\epsilon \in (0, \infty)$. 
By the compactness of $X$, we can find 
 finite points $\{p_{i}\}_{i\in I}$, 
where $I=\{0, \dots, k\}$, 
for which 
the family $\{B(p_{i},\epsilon; d)\}_{i\in I}$
is 
a mutually disjoint covering of $X$. 
We define a metric $r$ on $I$ by 
$r(i, j)=d(p_{i}, p_{j})$. 
For each $i\in I$, 
using 
Proposition \ref{prop:equivultmet}
take 
$e_{i}\in \ult{B(p_{i}, \epsilon; d)}{R}$. 
Then  $\yoquin=(X, I, r,  \{B(p_{i}, \epsilon)\}_{i\in I}, \{e_{i}\}_{i\in I})$ is a pleasant  $R$-amalgamation system. 
Let $D$ denote the pseudo-metric associated with $\yoquin$. 
Then the 
statements \ref{item:amal:1} and 
\ref{item:amal:2} 
 of  Proposition \ref{prop:amalgam} imply that 
$D\in \ult{X}{R}$ and 
$\umetdis_{X}^{R}(d, D)\le \epsilon$. 
Therefore $\ult{X}{R}$ is dense 
in $\yocmet{X}{R}$. 
\end{proof}

The following theorem is a 
reformulation of 
\cite[Theorem 1.3]{Ishiki2021dense}.
\begin{thm}\label{thm:doublingdense}
Let  $X$ be an ultrametrizable space, 
and 
 $R$ be a characteristic 
range set. 
Then $X$ is compact if and only if the set of all 
doubling ultrametrics is dense $F_{\sigma}$ in 
$(\ult{X}{R}, \umetdis_{X}^{R})$. 
\end{thm}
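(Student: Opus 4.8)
The plan is to read this off \cite[Theorem 1.3]{Ishiki2021dense}, which records the same dichotomy for the space of ultrametrics on $X$, after reconciling the notation with the present $R$-valued setting. Two remarks drive the whole argument. First, the property of being doubling is an invariant of an individual ultrametric space $(X, d)$; hence the family of doubling members of $\ult{X}{R}$ is nothing but the restriction to $\ult{X}{R}$ of the doubling family in the ambient space of ultrametrics, and I need not revisit the doubling condition itself. Second, by Corollary \ref{cor:spectrul} the distance $\umetdis_{X}^{R}(d, e)$ is detected by the agreement of the ball decompositions $\yocutsp{X}{d}{r}$, $\yocutsp{X}{e}{r}$ together with the quotient distances $\yocutdis{d}{r}$, $\yocutdis{e}{r}$ at each scale $r$; this is exactly the mechanism underlying the metric used in \cite{Ishiki2021dense}, so the two metrics describe the same scalewise behaviour of balls.

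I would first settle the $F_{\sigma}$ assertion, which needs no compactness. For each integer $N \ge 1$ let $\mathcal{E}_{N}$ be the set of $d \in \ult{X}{R}$ with doubling constant at most $N$, so that the doubling family equals $\bigcup_{N \ge 1} \mathcal{E}_{N}$. To see that $\mathcal{E}_{N}$ is closed, take $d_{k} \to d$ with each $d_{k} \in \mathcal{E}_{N}$ and fix a scale $r \in (0, \infty)$. Choosing $k$ with $\umetdis_{X}^{R}(d_{k}, d) < r$, Corollary \ref{cor:spectrul} makes the decompositions of $d_{k}$ and $d$ at scales $r$ and $2r$ coincide, so any $2r$-ball of $d$ is covered by at most $N$ of its $r$-balls, inherited from $d_{k}$. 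Since $r$ is arbitrary and $N$ is fixed, $d \in \mathcal{E}_{N}$, and the doubling family is therefore $F_{\sigma}$.

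The remaining content is the density dichotomy. When $X$ is not compact, I obtain the failure of density directly from \cite[Theorem 1.3]{Ishiki2021dense}: after identifying $\umetdis_{X}^{R}$ with the source metric on $R$-valued ultrametrics via Corollary \ref{cor:spectrul}, the non-density of the (intrinsically defined) doubling family transfers verbatim, and Proposition \ref{prop:yocsetdense} only serves to confirm that $\ult{X}{R}$ is a faithful dense locus on which to read the statement. When $X$ is compact, I would approximate a given $d \in \ult{X}{R}$ within $\epsilon$ by partitioning $X$ into finitely many clopen balls of radius at most $\epsilon$, equipping each compact piece with a doubling $R$-valued ultrametric (available since $R$ is characteristic, by Proposition \ref{prop:equivultmet}), and amalgamating them via Proposition \ref{prop:amalgam}, whose part \ref{item:amal:2} keeps the outcome within $\epsilon$ of $d$. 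The hard part is this compact direction: I must verify that the $R$-valued amalgamated ultrametric is still doubling, i.e.\ that gluing finitely many doubling pieces across a finite index metric inflates covering counts by only a fixed factor, and that replacing $[0, \infty)$-valued pieces by $R$-valued ones does not spoil the bound. Both points reduce to the scale-invariance of the doubling constant and to the combination of finitely many uniform bounds into a single one.
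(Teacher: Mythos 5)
The paper gives no argument for this theorem beyond declaring it a reformulation of \cite[Theorem 1.3]{Ishiki2021dense}, and your proposal rests on exactly the same citation, so the two approaches coincide. Your supplementary sketch (the $F_{\sigma}$ part via Corollary \ref{cor:spectrul} and the amalgamation-based density in the compact case) is a reasonable reconstruction, with the one caveat that Proposition \ref{prop:equivultmet} only supplies \emph{some} $R$-valued ultrametric on each clopen piece, not a doubling one, so that existence step is still being borrowed from the cited theorem rather than proved.
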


A range set $R$ is \emph{quasi-complete} if 
there exists $C\in [1, \infty)$ such that 
for every bounded subset $E$ of $R$, 
there exists $s\in R$ with 
$\sup E \le s \le C\cdot \sup E$. 
The next theorem  is stated in 
\cite[Theorem 6.7 and Proposotion 6.9]{Ishiki2021ultra}. 
\begin{thm}\label{thm:non-ddense}
Let  $X$ be a ultrametrizable space with an accumulation point, 
and $R$ be an quasi-complete characteristic  
range set. 
Then the set of all non-doubling ultrametrics in 
$\ult{X}{R}$ is dense $G_{\delta}$ in 
$(\ult{X}{R}, \umetdis_{X}^{R})$. 
\end{thm}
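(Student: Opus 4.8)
The plan is to argue by the Baire category theorem. For an integer $N\ge 2$, let $G_{N}$ be the set of those $d\in \ult{X}{R}$ that admit points $x_{1},\dots,x_{N}\in X$ and a value $s\in (0,\infty)$ with $d(x_{i},x_{j})=s$ for all $i\ne j$ (an \emph{equidistant $N$-tuple}). Recall that for ultrametrics the doubling property is equivalent to a uniform bound on the cardinality of mutually equidistant subsets: by Lemma \ref{lem:balldis} the representatives of the distinct maximal proper subballs of a ball are mutually equidistant at the diameter of that ball, so the branching of the associated tree is controlled by equidistant tuples and conversely. Consequently $d\in\ult{X}{R}$ is non-doubling precisely when it has arbitrarily large equidistant tuples, so the set of non-doubling ultrametrics equals $\bigcap_{N\ge 2}G_{N}$. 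Since $(\yocmet{X}{R},\umetdis_{X}^{R})$ is complete (Proposition \ref{prop:spmetcomp}) and $\ult{X}{R}$ can be seen to be a $G_{\delta}$ subset of it, hence a Baire space, it suffices to prove that each $G_{N}$ is open and dense; a countable intersection of dense open sets in a Baire space is dense $G_{\delta}$.

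For openness, suppose $d\in G_{N}$ with witness $x_{1},\dots,x_{N}$ and $d(x_{i},x_{j})=s$. I claim $U(d,s;\umetdis_{X}^{R})\yosub G_{N}$. Indeed, if $\umetdis_{X}^{R}(d,e)<s$, choose $\epsilon$ with $\umetdis_{X}^{R}(d,e)\le\epsilon<s$; then $d(x_{i},x_{j})\le e(x_{i},x_{j})\lor\epsilon$ forces $e(x_{i},x_{j})\ge s$ since $\epsilon<s$, while $e(x_{i},x_{j})\le d(x_{i},x_{j})\lor\epsilon=s$, so $e(x_{i},x_{j})=s$ for all $i\ne j$. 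Thus the very same tuple witnesses $e\in G_{N}$, and $G_{N}$ is open. This is exactly the rigidity of distances above the perturbation scale encoded in Lemma \ref{lem:disep}.

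For density, fix $d\in\ult{X}{R}$ and $\epsilon>0$; I will perturb $d$ below scale $\epsilon$ to manufacture an equidistant $N$-tuple. Using that $R$ is characteristic, pick $\delta\in R$ with $0<\delta<\epsilon$, and cover $X$ by the mutually disjoint clopen balls $B_{i}=B(p_{i},\delta;d)$, $i\in I$ (Lemma \ref{lem:ultraopcl}). Let $B_{*}=B(p_{*},\delta;d)$ be the ball containing the prescribed accumulation point; then $B_{*}$ is an infinite $0$-dimensional ultrametrizable space. Choose on $B_{*}$ an ultrametric $e_{*}\in\ult{B_{*}}{R}$ of diameter $\le\delta$ possessing an equidistant $N$-tuple at some scale $s\in R$ with $0<s\le\delta$: partition $B_{*}$ into $N$ nonempty clopen pieces and amalgamate local ultrametrics across the discrete scale $s$ via Proposition \ref{prop:amalgam}, taking one point from each piece. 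For $i$ with $B_{i}\ne B_{*}$ set $e_{i}=d|_{B_{i}}$. Then $\yoquin=(X,I,r,\{B_{i}\}_{i\in I},\{e_{i}\}_{i\in I})$ with $r(i,j)=d(p_{i},p_{j})$ is a pleasant $R$-amalgamation system, and its associated ultrametric $D$ satisfies $D\in\ult{X}{R}$ by Proposition \ref{prop:amalgam}\ref{item:amal:1}. Since $\yodiam_{d}(B_{i})\le\delta$ and $\yodiam_{e_{i}}(B_{i})\le\delta$ for every $i$, Proposition \ref{prop:amalgam}\ref{item:amal:2} gives $\umetdis_{X}^{R}(D,d)\le\delta<\epsilon$, while $D$ inherits the equidistant $N$-tuple of $e_{*}$, so $D\in G_{N}$. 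Hence $G_{N}$ is dense.

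The main obstacle is this density step, and it is exactly where the arithmetic hypotheses on $R$ are consumed: being characteristic supplies arbitrarily small admissible radii $\delta,s\in R$, while quasi-completeness guarantees that the suprema arising in the amalgamated metric \eqref{al:metric} are realized, up to the fixed bounded factor, by elements of $R$, so that the perturbation $D$ is genuinely $R$-valued and still generates the topology of $X$. Verifying that the local ultrametric $e_{*}$ can be built with the required diameter and scale inside $R$ (rather than merely in $[0,\infty)$) is the delicate point; once this is in place, the paragraphs above combine through Baire category to yield the claim. The only remaining routine check is that $\ult{X}{R}$ is indeed Baire, which one obtains by exhibiting it as a $G_{\delta}$ subset of the complete space $(\yocmet{X}{R},\umetdis_{X}^{R})$ from Proposition \ref{prop:spmetcomp}.
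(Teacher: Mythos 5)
First, a point of reference: the paper does not actually prove Theorem \ref{thm:non-ddense}; it is imported verbatim from \cite[Theorem 6.7 and Proposition 6.9]{Ishiki2021ultra}, so there is no in-paper argument to compare yours against. Judged on its own terms, your proposal has a genuine gap at its foundation.

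The gap is the claimed equivalence ``$d$ is non-doubling if and only if $d$ admits arbitrarily large equidistant tuples,'' i.e.\ the identification of the non-doubling set with $\bigcap_{N\ge 2}G_{N}$. Only one implication is true. If $x_{1},\dots,x_{N}$ are mutually at distance $s$, then any ball of radius $s/2$ contains at most one $x_{i}$, so $B(x_{1},s)=B(x_{1},2\cdot(s/2))$ needs at least $N$ balls of radius $s/2$ to cover it; hence $\bigcap_{N}G_{N}$ is contained in the non-doubling set. The converse fails: your appeal to Lemma \ref{lem:balldis} only gives that points in distinct subballs $B(a,r)$, $B(b,r)$ of a ball $B$ are at distance $d(a,b)$, and these values need not all equal $\yodiam(B)$ — the branching of the ball tree is \emph{not} controlled by equidistant tuples. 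Concretely, take $X=\yoopsp=\zz_{\ge 0}\cup\{\infty\}$ with $d(n,\infty)=a_{n}$ and $d(n,m)=a_{\min(n,m)}$ for a strictly decreasing sequence $a_{n}\to 0$ with $a_{n}=1/\log(n+2)$. This is an element of $\ult{\yoopsp}{[0,\infty)}$; every equidistant tuple has size at most $2$ (for $i<j<k$ one gets $d(x_{i},x_{j})=d(x_{i},x_{k})=a_{i}\neq a_{j}=d(x_{j},x_{k})$), yet the ball $B(\infty,a_{k})=\{j\ge k\}\cup\{\infty\}$ requires roughly $\min\{j: a_{j}\le a_{k}/2\}-k$ singleton balls of radius $a_{k}/2$ to cover, which is unbounded in $k$. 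So this $d$ is non-doubling but lies in no $G_{N}$ for $N\ge 3$. Consequently your argument, even granting the (correct) openness and density of each $G_{N}$, only shows that the non-doubling set \emph{contains} a dense $G_{\delta}$ set, i.e.\ that it is dense and comeager; it does not show that the non-doubling set \emph{is} $G_{\delta}$. To recover the full statement you would need a separate argument that the doubling set is $F_{\sigma}$, e.g.\ by writing it as $\bigcup_{N}D_{N}$ with $D_{N}$ the set of $d$ for which every $B(x,2r;d)$ is covered by $N$ balls of radius $r$, and showing each $D_{N}$ is closed — which is essentially what the cited source does.

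Two smaller points. Your density construction for $G_{N}$ (amalgamating an equidistant $N$-tuple at a small scale $s\in R$ into the ball around the accumulation point) is sound and only uses that $R$ is characteristic; your closing remark that quasi-completeness is consumed there is not accurate — that hypothesis enters elsewhere in the original argument. Finally, the assertion that $\ult{X}{R}$ is a $G_{\delta}$ subset of the complete space $(\yocmet{X}{R},\umetdis_{X}^{R})$ is left unproved, and Proposition \ref{prop:spmetcomp} as stated covers only compact $X$, whereas the theorem allows arbitrary ultrametrizable $X$ with an accumulation point (where $\umetdis_{X}^{R}$ may even take the value $\infty$); the Baire property of $\ult{X}{R}$ needs its own justification in that generality.
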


A range set is \emph{exponential} if
there exist $a\in (0, 1)$ and 
$M\in [1, \infty)$ such that  for 
every $n\in \zz_{\ge 0}$  we have 
$[M^{-1}a^{n}, Ma^{n}]\cap R\neq \emptyset$. 
The proof of the next theorem is presented in 
\cite[Theorem 1.5]{Ishiki2021dense}. 
\begin{thm}\label{thm:updense}
Let $R$ be a characteristic
range set. 
The the following statements are true: 
\begin{enumerate}
\item 
The set $R$ is exponential if and only if 
the set of all uniformly perfect ultrametrics in 
$\ult{X}{R}$ is dense $F_{\sigma}$ in 
the space
$(\ult{X}{R}, \umetdis_{X}^{S})$
\item The set of all non-uniformly perfect ultrametrics is 
dense $G_{\delta}$ in $(\ult{X}{R}, \umetdis_{X}^{R})$. 
\end{enumerate}
\end{thm}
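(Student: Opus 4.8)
Throughout I read the statement with the standing hypotheses of Theorems~\ref{thm:doublingdense} and \ref{thm:non-ddense}, namely that $X$ is a compact ultrametrizable space possessing an accumulation point; I make this assumption explicit, since otherwise uniform perfectness is vacuous. The plan is to treat both assertions as genericity statements inside $(\ult{X}{R}, \umetdis_{X}^{R})$, whose ambient space of pseudo-ultrametrics is complete by Proposition~\ref{prop:spmetcomp}, and to translate uniform perfectness into a statement about the multiplicative gaps appearing in the families of closed balls $\yocutsp{X}{d}{r}$.

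For $d\in \ult{X}{R}$ and an integer $n\ge 2$, say that $d$ \emph{has an $n$-gap} if there are $p\in X$ and $r\in (0, \yodiam_{d}(X))$ with $B(p, r; d)=B(p, r/n; d)$, $B(p, r; d)\neq \{p\}$, and $B(p, r; d)\neq X$; uniform perfectness of $d$ is precisely the assertion that the set of $n$ for which $d$ has an $n$-gap is bounded. Let $O_{n}$ be the set of $d\in \ult{X}{R}$ having an $n$-gap; since a ratio-$n$ gap is also a ratio-$m$ gap for $m\le n$, the non-uniformly-perfect ultrametrics are exactly $\bigcap_{n}O_{n}$. By Corollary~\ref{cor:spectrul}, whenever $\umetdis_{X}^{R}(d, e)$ is strictly below the scale $r$ at which an $n$-gap of $d$ is witnessed, $d$ and $e$ share both $\yocutsp{X}{\cdot}{r}$ and $\yocutdis{\cdot}{r}$, so $e$ inherits the same $n$-gap; hence each $O_{n}$ is open. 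Consequently $\bigcap_{n}O_{n}$ is a $G_{\delta}$ set and the uniformly perfect ultrametrics form its $F_{\sigma}$ complement, which settles the structural half of both assertions.

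The density clauses are obtained by the amalgamation construction of Proposition~\ref{prop:amalgam}, exactly as in the proof of Proposition~\ref{prop:yocsetdense}. Given $d\in \ult{X}{R}$ and $\epsilon\in (0, \infty)$, cover $X$ by finitely many disjoint clopen balls $B(p_{i}, \epsilon; d)$, set $r(i, j)=d(p_{i}, p_{j})$, and on each piece install $e_{i}\in \ult{B(p_{i}, \epsilon; d)}{R}$ (available by Proposition~\ref{prop:equivultmet}); the associated ultrametric $D$ then lies in $\ult{X}{R}$ by \ref{item:amal:1} and satisfies $\umetdis_{X}^{R}(d, D)\le \epsilon$ by \ref{item:amal:2}. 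For item~(2) I would choose each $e_{i}$ so that its realized distances form a lacunary sequence in $R$ decreasing to $0$ with ratios tending to $\infty$, which is possible because $R$ is characteristic; then $D$ has $n$-gaps for every $n$, so it is non-uniformly perfect, proving that $\bigcap_{n}O_{n}$ is dense. For the forward implication of item~(1), assume $R$ is exponential with constants $a$ and $M$, and instead choose each $e_{i}$ to refine at the geometric scales $a^{k}$, using that $[M^{-1}a^{k}, Ma^{k}]\cap R\neq \emptyset$ supplies an admissible $R$-value near every level below $\epsilon$; then all gap-ratios of $D$ are bounded by a constant depending only on $a$ and $M$, so $D$ is uniformly perfect and $\epsilon$-close to $d$, giving density of the $F_{\sigma}$ set.

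It remains to prove the converse in item~(1) and to identify the hard point. If $R$ is not exponential then, being characteristic, it carries multiplicative gaps of unbounded ratio at arbitrarily small scales; since every accumulation point of $X$ realizes $d$-distances (lying in $R$) that accumulate at $0$, any such gap straddled by realized distances forces an $n$-gap of $d$, and letting $n\to\infty$ shows that no $R$-valued ultrametric on $X$ is uniformly perfect. Thus the uniformly perfect set is empty, hence not dense, which is the contrapositive. The main obstacle is the forward direction of item~(1): producing a genuinely uniformly perfect ultrametric within distance $\epsilon$ requires the amalgamation to refine simultaneously at every small scale with a single global gap bound, and verifying that the amalgamated metric inherits such a uniform bound across all pieces together with the transition distances $r(i,j)$ — rather than merely a local bound on each $B(p_{i}, \epsilon; d)$ — is the delicate step, and it is exactly here that the exponential property of $R$ is used.
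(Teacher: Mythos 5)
First, a point of comparison: the paper gives no proof of Theorem~\ref{thm:updense} at all --- it is quoted verbatim from an earlier work of the author (Theorem~1.5 of the cited reference \emph{Ishiki2021dense}), so there is no in-paper argument to measure yours against. Your structural half is sound: encoding non-uniform perfectness as $\bigcap_{n}O_{n}$ via ``$n$-gaps'' $B(p,r;d)=B(p,r/n;d)$, and using Corollary~\ref{cor:spectrul} to see that an $n$-gap witnessed at scale $r$ persists under perturbations smaller than $r/n$, correctly yields that $O_{n}$ is open, hence the $G_{\delta}$/$F_{\sigma}$ dichotomy. The density of the non-uniformly perfect set and the non-exponential direction of (1) are also essentially right.

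There are, however, two genuine gaps. First, your standing hypothesis --- $X$ compact ultrametrizable with an accumulation point --- is not sufficient for item (1). If $X$ has even one isolated point $q$, then $B(q,r;d)=\{q\}$ for all small $r$ and \emph{no} $d\in\ult{X}{R}$ is uniformly perfect, so the forward implication of (1) is false for such $X$ regardless of $R$ (e.g.\ $X=\yoopsp$). The correct setting is a perfect compact ultrametrizable space, i.e.\ the Cantor set $\yocantorc$, which is exactly how the paper invokes the theorem in items (4) and (5) of Theorem~\ref{thm:manyinj}. Second, the forward density claim in (1) --- that when $R$ is exponential one can produce a uniformly perfect $D\in\ult{X}{R}$ with $\umetdis_{X}^{R}(d,D)\le\epsilon$ --- is only sketched; you assert that each $e_{i}$ can be built to ``refine at the geometric scales $a^{k}$'' with all gap-ratios of the amalgam bounded by a constant depending only on $a$ and $M$, and you yourself flag the verification as the delicate step without carrying it out. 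That verification is the actual content of the forward implication: one must construct on each clopen piece an $R$-valued ultrametric realizing, at \emph{every} point and every scale below $\epsilon$, a distance in each window $(cr,r]$ for a fixed $c$ (this is where $[M^{-1}a^{k},Ma^{k}]\cap R\neq\emptyset$ is used), and then check that the finitely many transition values $r(i,j)\ge\epsilon$ do not destroy the uniform bound. As written, the proof of (1) is incomplete at precisely this point.
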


For a metric space 
$(X, d)$, 
we denote by $\yorhdim{X, d}$, 
$\yorpdim{X, d}$, 
$\yorubdim{X, d}$, 
and 
$\yoradim{X, d}$ 
the  Hausdorff dimension, 
the packing dimension, 
the upper box dimension, 
and 
the Assouad dimension of $(X, d)$.
For the definitions and details  of these 
dimensions, 
 we refer the readers to
  \cite{falconer1997techniques}, 
  \cite{falconer2004fractal}, 
 \cite{fraser2020assouad}, 
  \cite{Ishiki2023fractalin}, 
  and 
    \cite{Ishiki2022factor}.
We also denote by $\yorlset$
the set of all $(a_{1}, a_{2}, a_{3}, a_{4})\in [0, \infty]^{4}$
such that $a_{1}\le a_{2}\le a_{3}\le a_{4}$. 
Let $X$ be a metrizable space, and 
$R$ be a range set. 
For $\yorbd{a}=(a_{1}, a_{2}, a_{3}, a_{4})\in \yorlset$, 
let $\yordimsetu{X}{R}{\yorbd{a}}$ 
stand for  the 
set of all bounded metric $d\in \ult{X}{R}$ 
satisfying that
$\yorhdim{X, d}=a_{1}$, 
$\yorpdim{X, d}=a_{2}$, 
$\yorubdim{X, d}=a_{3}$, 
and 
$\yoradim{X, d}=a_{4}$. 

Even if the readers does not know the 
details of the dimensions explained above, 
all arguments in this paper 
are traceable  using only the finite stability 
 of 
dimensions 
stated in the next lemma. 
The proof can be found in 
\cite{falconer1997techniques}, 
\cite{falconer2004fractal}, and \cite{fraser2020assouad}.

\begin{lem}\label{lem:finstab}
Let $\mathbf{Dim}$ be any one of 
$\hdim$, 
$\pdim$, 
$\ubdim$, 
and $\adim$. 
If $(X, d)$ is a metric space, and 
$A$ and $B$ are subsets of $X$, 
then we have 
$\mathbf{Dim}(A\cup B, d)
=\max\{\mathbf{Dim}(A, d), 
\mathbf{Dim}(B, d)\}$. 
\end{lem}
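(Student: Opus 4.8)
The plan is to prove the two inequalities separately. For the lower bound $\max\{\mathbf{Dim}(A, d), \mathbf{Dim}(B, d)\} \le \mathbf{Dim}(A \cup B, d)$, I would invoke the monotonicity of each of the four dimensions under set inclusion: since $A \subseteq A \cup B$ and $B \subseteq A \cup B$, monotonicity of $\mathbf{Dim}$ yields $\mathbf{Dim}(A, d) \le \mathbf{Dim}(A \cup B, d)$ and likewise for $B$, whence the maximum is dominated. Monotonicity holds for all of $\hdim$, $\pdim$, $\ubdim$, and $\adim$ directly from their definitions (every cover, packing, or covering-number estimate for the larger set restricts to the smaller one), so this direction is uniform and routine.

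The reverse inequality $\mathbf{Dim}(A \cup B, d) \le \max\{\mathbf{Dim}(A, d), \mathbf{Dim}(B, d)\}$ is where the four cases diverge, and I would treat them in two groups. For $\hdim$ and $\pdim$ I would use that the corresponding Hausdorff and packing measures $\mathcal{H}^{s}$ and $\mathcal{P}^{s}$ are outer measures, hence finitely (indeed countably) subadditive. Fix any $s$ with $\max\{\mathbf{Dim}(A, d), \mathbf{Dim}(B, d)\} < s$; then $\mathcal{H}^{s}(A) = \mathcal{H}^{s}(B) = 0$ (and analogously for $\mathcal{P}^{s}$), so $\mathcal{H}^{s}(A \cup B) \le \mathcal{H}^{s}(A) + \mathcal{H}^{s}(B) = 0$, forcing $\mathbf{Dim}(A \cup B, d) \le s$; letting $s$ decrease to the maximum gives the claim.

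For $\ubdim$ and $\adim$, which are not induced by outer measures, I would argue directly with covering numbers. Writing $N_{\delta}(E)$ for the least number of sets of diameter at most $\delta$ needed to cover $E$, the elementary inequality $N_{\delta}(A \cup B) \le N_{\delta}(A) + N_{\delta}(B) \le 2\max\{N_{\delta}(A), N_{\delta}(B)\}$ holds at every scale. Taking logarithms, dividing by $\log(1/\delta)$, and passing to the $\limsup$ as $\delta \to 0$ makes the additive $\log 2$ term vanish, yielding $\ubdim(A \cup B, d) \le \max\{\ubdim(A, d), \ubdim(B, d)\}$. For $\adim$ the same covering inequality is applied locally, at every pair of scales $0 < r < R$ and every ball $B(x, R; d)$: combining the defining bounds $N(B(x,R;d) \cap A, r) \le C_{A}(R/r)^{s_{A}}$ and its analogue for $B$ gives $N(B(x,R;d) \cap (A \cup B), r) \le (C_{A} + C_{B})(R/r)^{\max\{s_{A}, s_{B}\}}$, using $R/r \ge 1$, so the Assouad exponent of the union is at most $\max\{s_{A}, s_{B}\}$.

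The main obstacle — or rather the point that needs genuine care rather than a formal measure-theoretic argument — is the $\ubdim$ and $\adim$ case. Since the upper box dimension fails \emph{countable} stability, one must use finiteness of the union essentially (the vanishing of the additive term after taking logarithms is precisely what uses that there are only two pieces), and the Assouad estimate must be kept uniform over all centers and all scale pairs so that the constants $C_{A}, C_{B}$ combine into $C_{A} + C_{B}$ without affecting the exponent. The $\hdim$ and $\pdim$ cases, by contrast, are immediate consequences of subadditivity of the associated measures together with monotonicity.
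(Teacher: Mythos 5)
Your argument is correct and is the standard one: the paper itself gives no proof of this lemma, deferring instead to the cited references (Falconer's books and Fraser's monograph), which establish finite stability by exactly the route you describe --- monotonicity for the lower bound, subadditivity of the outer measures $\mathcal{H}^{s}$ and $\mathcal{P}^{s}$ for the Hausdorff and packing cases, and the covering-number estimates $N_{\delta}(A\cup B)\le N_{\delta}(A)+N_{\delta}(B)$ (globally for $\ubdim$, uniformly over balls and scale pairs for $\adim$) for the remaining two. Nothing further is needed.
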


\begin{df}\label{df:ad}
We denote by $\yocantorc$ the 
Cantor set. 
For $\yorbd{a}\in \yorlset$, 
a characteristic range set $R$ is 
said to be 
\emph{$(\yocantorc, \yorbd{a})$-admissible} if 
$\yordimsetu{\yocantorc}{R}{\yorbd{a}}\neq \emptyset$. 
\end{df}
For example, the 
characteristic range sets $[0, \infty)$ and $\qq_{\ge 0}$
are $(\yocantorc, \yorbd{a})$-admissible for all 
$\yorbd{a}\in\yorlset$.

We define 
$\yorzeron=(0, 0, 0, 0)\in \yorlset$.
The next lemma is obtained by 
\cite[Lemma 3.7]{Ishiki2023fractalin}
and 
the 
fact that we can take a strictly decreasing sequence  convergent to $0$
from 
every  characteristic range set. 
\begin{lem}\label{lem:0000}
Every characteristic range set is 
$(\yocantorc, \yorzeron)$-admissible. 
\end{lem}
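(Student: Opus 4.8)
The plan is to construct, for a given characteristic range set $R$, a single bounded compatible $R$-valued ultrametric on $\yocantorc$ all of whose fractal dimensions vanish, thereby witnessing $\yordimsetu{\yocantorc}{R}{\yorzeron}\neq\emptyset$. I identify $\yocantorc$ with $\{0,1\}^{\zz_{\ge 0}}$ carrying the product topology. Using that $R$ is characteristic, I first choose a strictly decreasing sequence $\{r_{n}\}_{n\in\zz_{\ge 0}}$ in $R\setminus\{0\}$ with $r_{0}\le 1$ that not only tends to $0$ but whose consecutive ratios blow up: having picked $r_{0}>\dots>r_{n}$, characteristicity lets me select $r_{n+1}\in R\setminus\{0\}$ with $r_{n+1}\le 2^{-(n+1)}r_{n}$, so that $r_{n}/r_{n+1}\ge 2^{\,n+1}\to\infty$. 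I then define $d(x,y)=r_{\nu(x,y)}$, where $\nu(x,y)=\min\{k:x_{k}\neq y_{k}\}$ and $d(x,x)=0$. This is the standard first-difference ultrametric; its values lie in $\{r_{n}\mid n\in\zz_{\ge 0}\}\cup\{0\}\subseteq R$, its balls $B(x,r_{n};d)$ are exactly the depth-$n$ cylinders, and since $r_{n}\to 0$ these balls form a neighbourhood base, so $d\in\ult{\yocantorc}{R}$ and $\di_{d}(\yocantorc)=r_{0}<\infty$.

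The heart of the argument is the dimension computation. The covering structure is transparent at each scale: covering $\yocantorc$ by balls of radius $\epsilon$ with $r_{n+1}\le\epsilon<r_{n}$ requires exactly $2^{\,n+1}$ balls. Writing $t_{n}=-\log r_{n}$, my choice forces $t_{n+1}-t_{n}=\log(r_{n}/r_{n+1})\ge(n+1)\log 2$, hence $t_{n}\ge\tfrac{1}{2}n(n+1)\log 2$. Consequently $\ubdim(\yocantorc,d)=\limsup_{n}\frac{(n+1)\log 2}{t_{n}}=0$, and by the standard inequalities $\hdim\le\pdim\le\ubdim$ (reflected in the ordering defining $\yorlset$), the Hausdorff, packing and upper box dimensions all vanish. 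This is the easy half, and it only exploits that the scales decrease superpolynomially.

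The remaining and genuinely delicate point is the Assouad dimension, which is insensitive to how fast the scales shrink but acutely sensitive to how the branching compares to the scale ratios across every pair of levels. For $m<n$ a ball $B(x,r_{m};d)$ splits into $2^{\,n-m}$ balls of radius $r_{n}$, so $\adim(\yocantorc,d)\le s$ amounts to a uniform bound $\sup_{m<n}\bigl[(n-m)\log 2-s(t_{n}-t_{m})\bigr]<\infty$. I would prove this for every $s>0$: fix $M$ with $(M+1)\log 2\ge 2\log 2/s$; then the superlinear growth $t_{n}-t_{m}\ge\sum_{k=m}^{n-1}(k+1)\log 2\ge(n-M)(M+1)\log 2$ makes the bracketed quantity nonpositive once $n\ge 2M$, while only the finitely many pairs with $m<n<2M$ remain, on which it is bounded by $2M\log 2$. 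Hence $\adim(\yocantorc,d)=0$, and combining everything yields $d\in\yordimsetu{\yocantorc}{R}{\yorzeron}$, which is exactly the desired admissibility. The main obstacle is precisely this Assouad estimate: making the scales decrease superpolynomially suffices to kill the box, Hausdorff and packing dimensions, but killing the Assouad dimension forces the stronger requirement that the successive scale ratios $r_{n}/r_{n+1}$ tend to infinity, and the entire construction is arranged around securing that from characteristicity alone.
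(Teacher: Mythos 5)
Your construction is correct, and it is a genuinely different route from the paper's, which disposes of this lemma in one line by citing an external result (\cite[Lemma 3.7]{Ishiki2023fractalin}) together with the remark that a characteristic range set contains a strictly decreasing sequence tending to $0$. You instead give a self-contained proof: identify $\yocantorc$ with $\{0,1\}^{\zz_{\ge 0}}$, extract from $R\setminus\{0\}$ a sequence $r_{n}$ with $r_{n+1}\le 2^{-(n+1)}r_{n}$, and put the first-difference ultrametric $d(x,y)=r_{\nu(x,y)}$ on it; this visibly lies in $\ult{\yocantorc}{R}$, is bounded, and the covering counts at each scale are exact ($2^{n+1}$ balls of radius $\epsilon$ for $r_{n+1}\le\epsilon<r_{n}$), so the dimension estimates reduce to elementary inequalities on $t_{n}=-\log r_{n}$. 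Your computations check out: the superpolynomial decay $t_{n}\ge\tfrac12 n(n+1)\log 2$ kills $\ubdim$ (hence $\hdim$ and $\pdim$), and the divergence of the ratios $r_{n}/r_{n+1}$ gives, for every $s>0$, a uniform bound on $(n-m)\log 2-s(t_{n}-t_{m})$ over $m<n$, so $\adim(\yocantorc,d)=0$ as well. What your approach buys is transparency and independence from the companion paper: in particular you correctly isolate the point that the paper's phrasing hides, namely that an arbitrary decreasing null sequence in $R$ does not suffice and one must arrange the consecutive ratios to blow up before the Assouad dimension vanishes. What the paper's citation buys is brevity and reuse of a lemma already proved in greater generality (for arbitrary prescribed dimension vectors, not just $\yorzeron$), which is needed elsewhere via the notion of $(\yocantorc,\yorbd{a})$-admissibility. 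Your argument could stand as a direct replacement for the paper's proof of this particular lemma.
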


\begin{lem}\label{lem:r:r}
For every $\yorbd{a}\in \yorlset$, 
if a characteristic range set $R$ is 
$(\yocantorc, \yorbd{a})$-admissible, 
then so is 
$R\cap [0, r]$ 
for all $r\in R\setminus \{0\}$.  
\end{lem}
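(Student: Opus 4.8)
The plan is to manufacture the desired metric on $\yocantorc$ by \emph{truncating} a witnessing metric at the level $r$, and then to argue that this truncation leaves all four dimensions untouched by appealing to the finite stability in Lemma~\ref{lem:finstab}. So I would fix a bounded $d\in\ult{\yocantorc}{R}$ realizing $\yorbd{a}$ (one exists by admissibility), cap it at $r$, and check that the capped metric lies in $\ult{\yocantorc}{R\cap[0,r]}$ with the same four dimensions.

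First I would verify that $R\cap[0,r]$ is again a characteristic range set: it contains $0$ and lies in $[0,\infty)$, and for $t\in[0,\infty)$ the characteristicity of $R$ produces $s\in R\setminus\{0\}$ with $s\le t$ and $s\le r$, so $s\in(R\cap[0,r])\setminus\{0\}$. Next, writing $a_{1},a_{2},a_{3},a_{4}$ for the coordinates of $\yorbd{a}$, I would take $d\in\ult{\yocantorc}{R}$ bounded with $\hdim(\yocantorc,d)=a_{1}$, $\pdim(\yocantorc,d)=a_{2}$, $\ubdim(\yocantorc,d)=a_{3}$, $\adim(\yocantorc,d)=a_{4}$, and define $D\colon\yocantorc^{2}\to[0,\infty)$ by $D(x,y)=\min\{d(x,y),r\}$. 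Because the minimum with a fixed constant commutes with $\lor$, i.e. $\min\{s,c\}\lor\min\{t,c\}=\min\{s\lor t,c\}$, the strong triangle inequality for $d$ transfers to $D$, so $D$ is an ultrametric; its values lie in $R\cap[0,r]$, since $D(x,y)$ equals $d(x,y)\in R\cap[0,r]$ when $d(x,y)\le r$ and equals $r\in R\cap[0,r]$ otherwise. As capping at the positive level $r$ leaves every ball of radius strictly below $r$ unchanged, $D$ generates the same topology as $d$, whence $D\in\ult{\yocantorc}{R\cap[0,r]}$ and $D$ is bounded. (Equivalently, $D$ is the pseudo-ultrametric associated with the amalgamation system obtained from the radius-$r$ balls of $d$ glued at constant top-distance $r$, so membership in $\ult{\yocantorc}{R\cap[0,r]}$ also follows from Proposition~\ref{prop:amalgam}\ref{item:amal:1}.)

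To see that $D$ realizes $\yorbd{a}$, I would use that the closed balls of radius $r$ partition $\yocantorc$ into clopen sets (Lemma~\ref{lem:ultraopcl}), and that by compactness this partition is finite, say $\yocantorc=\bigcup_{i\in I}B_{i}$ with $I$ finite. On each $B_{i}$ every $d$-distance is at most $r$, so $D$ and $d$ coincide on $B_{i}^{2}$; hence, letting $\mathbf{Dim}$ be any of $\hdim,\pdim,\ubdim,\adim$, we get $\mathbf{Dim}(B_{i},D)=\mathbf{Dim}(B_{i},d)$. Applying Lemma~\ref{lem:finstab} inductively over the finite union then yields $\mathbf{Dim}(\yocantorc,D)=\max_{i\in I}\mathbf{Dim}(B_{i},D)=\max_{i\in I}\mathbf{Dim}(B_{i},d)=\mathbf{Dim}(\yocantorc,d)$, so all four dimensions of $(\yocantorc,D)$ agree with those of $(\yocantorc,d)$. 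Thus $D\in\yordimsetu{\yocantorc}{R\cap[0,r]}{\yorbd{a}}$, giving the claimed admissibility.

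I expect the only genuinely delicate point to be the dimension-preservation step, and the ball-partition argument isolates exactly why finite stability is enough: truncation modifies the metric only at scales $\ge r$, that is, only across the finitely many radius-$r$ balls, while leaving the within-ball metric — which alone governs each of the four dimensions — untouched. The remaining verifications, that $R\cap[0,r]$ stays characteristic and that $D$ stays an $R\cap[0,r]$-valued, topology-generating, bounded ultrametric, I expect to be routine.
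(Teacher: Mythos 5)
Your proposal is correct and follows essentially the same route as the paper: truncate a witnessing metric at level $r$ via $\min\{d,r\}$, decompose $\yocantorc$ into finitely many closed $d$-balls (the paper uses radius $r/2$, you use radius $r$; both work since the truncation is the identity on each such ball), and invoke the finite stability of the four dimensions from Lemma~\ref{lem:finstab}. Your additional verifications that $R\cap[0,r]$ remains characteristic and that the truncation preserves the topology and the strong triangle inequality are routine points the paper leaves implicit.
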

\begin{proof}
Take $d\in \yordimsetu{\yocantorc}{R}{\yorbd{a}}$
and define $e\in \ult{\yocantorc}{R\cap [0, r]}$ by 
 $e(x, y)=\min\{d(x, y), r\}$. 
 By the compactness of $\yocantorc$, 
 we can find points  $\{p_{i}\}_{i=0}^{k}$ in $\yocantorc$
 for which $\bigcup_{i=0}^{k}B(p_{i}, r/2; e_{i})=X$. 
 From the definition of $e_{i}$, it follows that 
 $B(p_{i}, r/2; e_{i})=B(p_{i}, r/2; d)$. 
Thus, 
according to  Lemma \ref{lem:finstab} and 
$d\in \yordimsetu{\yocantorc}{R}{\yorbd{a}}$, 
we have $e\in \yordimsetu{\yocantorc}{R\cap[0, r]}{\yorbd{a}}$. 
\end{proof}

By 
\cite[The statement (N3) in Theorem 4.7]{Ishiki2022factor}, 
we obtain the next extension theorem of ultrametrics 
preserving fractal dimensions. 
\begin{lem}\label{lem:frac1}
Let $X$ be a separable ultrametrizable space, 
$R$ be a characteristic range set, 
and
$F$ be a closed subset of $X$. 
Then for all  $\yorbd{a}\in \yorlset$ and 
for all 
$d\in \yordimsetu{F}{R}{\yorbd{a}}$, 
there exists 
$D\in \yordimsetu{X}{R}{\yorbd{a}}$
such that $D|_{A^{2}}=d$.
\end{lem}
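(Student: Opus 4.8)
The plan is to deduce the statement directly from the extension theorem cited immediately above, \cite[the statement (N3) in Theorem 4.7]{Ishiki2022factor}. First I would fix $\yorbd{a}=(a_{1}, a_{2}, a_{3}, a_{4})\in \yorlset$ and a metric $d\in \yordimsetu{F}{R}{\yorbd{a}}$. By the definition of the set $\yordimsetu{F}{R}{\yorbd{a}}$, this says precisely that $d$ is a bounded ultrametric in $\ult{F}{R}$ with $\yorhdim{F, d}=a_{1}$, $\yorpdim{F, d}=a_{2}$, $\yorubdim{F, d}=a_{3}$, and $\yoradim{F, d}=a_{4}$.

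Next I would check that the hypotheses required to apply the cited theorem hold in our situation: $X$ is separable and ultrametrizable, $R$ is a characteristic range set, $F$ is a closed subset of $X$, and $d$ is an $R$-valued ultrametric on $F$. Under these hypotheses, statement (N3) furnishes an ultrametric $D \in \ult{X}{R}$ that restricts to $d$ on $F$, i.e. $D|_{F^{2}} = d$, and that preserves each of the four dimensions: $\yorhdim{X, D} = \yorhdim{F, d}$, $\yorpdim{X, D} = \yorpdim{F, d}$, $\yorubdim{X, D} = \yorubdim{F, d}$, and $\yoradim{X, D} = \yoradim{F, d}$. Substituting the values of $d$, these four equalities become $\yorhdim{X, D}=a_{1}$, $\yorpdim{X, D}=a_{2}$, $\yorubdim{X, D}=a_{3}$, and $\yoradim{X, D}=a_{4}$.

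It then remains only to confirm that $D$ is bounded, so that $D$ genuinely lies in $\yordimsetu{X}{R}{\yorbd{a}}$. This is where I expect the only real care to be needed: one cannot in general pass from an unbounded extension to a bounded one by truncation without destroying the Assouad dimension, since truncating an unbounded ultrametric with infinitely many well-separated points can raise $\yoradim{X, D}$ from a finite value to $\infty$. Hence the boundedness must come from the construction itself. I would therefore record that the extension produced by \cite[Theorem 4.7]{Ishiki2022factor} can be taken bounded, indeed with diameter controlled by that of $(F, d)$, so that all four prescribed dimensions together with boundedness hold simultaneously. Granting this, $D\in \yordimsetu{X}{R}{\yorbd{a}}$ with $D|_{F^{2}}=d$, which is the assertion of the lemma; the substantive work, namely the simultaneous preservation of the four fractal dimensions, is entirely contained in the cited theorem.
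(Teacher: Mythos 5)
Your proposal matches the paper's treatment exactly: the lemma is stated as an immediate consequence of statement (N3) in Theorem 4.7 of the cited extension paper, with no further argument given. Your additional remark that one must check boundedness of the extension $D$ (since membership in $\yordimsetu{X}{R}{\yorbd{a}}$ requires it) is a legitimate point of care that the paper glosses over, and your resolution --- that the cited construction controls the diameter --- is the right one.
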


\begin{lem}\label{lem:dimext}
If 
$X$ is an uncountable compact  ultrametrizable 
space,
$\yorbd{a}\in \yorlset$, 
and 
a characteristic range set $R$ is a $(\yocantorc, \yorbd{a})$-admissible,  then
$\yordimsetu{X}{R}{\yorbd{a}}\neq \emptyset$. 
\end{lem}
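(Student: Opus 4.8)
The plan is to reduce the assertion to the extension theorem Lemma \ref{lem:frac1} by locating a closed homeomorphic copy of the Cantor set inside $X$ and transplanting onto it an admissible ultrametric supplied by the hypothesis. Since admissibility gives us a metric realizing the prescribed four dimensions on $\yocantorc$, and the extension theorem lets us spread such a metric over a compact ultrametrizable space without disturbing its four fractal dimensions, the only genuinely new ingredient needed is the topological one.

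First I would exploit that $X$ is uncountable and compact ultrametrizable. Being ultrametrizable, $X$ is zero-dimensional, and being compact metrizable it is separable and Polish. By the Cantor--Bendixson theorem the scattered part of $X$ is countable, so the perfect kernel $P$ of $X$ is nonempty; moreover $P$ is closed in $X$, hence compact, and as a subspace it is zero-dimensional and metrizable. Thus $P$ is a nonempty perfect compact zero-dimensional metrizable space, and by Brouwer's characterization it is homeomorphic to $\yocantorc$. I fix a homeomorphism $\varphi\colon \yocantorc\to P$ and put $F=P$, a closed subset of $X$.

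Next, since $R$ is $(\yocantorc, \yorbd{a})$-admissible, I choose $d\in \yordimsetu{\yocantorc}{R}{\yorbd{a}}$ and transport it along $\varphi$ by setting $d'(\varphi(x), \varphi(y))=d(x, y)$ for $x, y\in \yocantorc$. Then $\varphi\colon (\yocantorc, d)\to (F, d')$ is an isometry, so $d'\in \ult{F}{R}$ is bounded, generates the topology of $F$, and $(F, d')$ shares the Hausdorff, packing, upper box, and Assouad dimensions of $(\yocantorc, d)$; in other words $d'\in \yordimsetu{F}{R}{\yorbd{a}}$. Applying Lemma \ref{lem:frac1} to the separable ultrametrizable space $X$, its closed subset $F$, and the metric $d'\in \yordimsetu{F}{R}{\yorbd{a}}$ yields $D\in \yordimsetu{X}{R}{\yorbd{a}}$ with $D|_{F^{2}}=d'$, whence $\yordimsetu{X}{R}{\yorbd{a}}\neq \emptyset$.

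I expect the main obstacle to be the purely topological step of producing the closed Cantor subset $F$ together with the verification that its dimensions are computed correctly; however, this reduces to classical facts (Cantor--Bendixson and Brouwer's characterization), and the isometry invariance of the four dimensions is immediate. All of the delicate dimension bookkeeping for the ambient space $X$ is packaged inside the cited extension theorem, whose construction in turn rests on the finite stability recorded in Lemma \ref{lem:finstab}, so no further estimates are required here.
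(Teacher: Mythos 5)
Your proof is correct and follows essentially the same route as the paper: locate a closed copy of the Cantor set in $X$, transport an admissible metric onto it, and invoke the extension theorem (Lemma \ref{lem:frac1}). The only difference is cosmetic --- the paper cites a reference for the existence of the closed Cantor subspace, whereas you derive it from the Cantor--Bendixson theorem and Brouwer's characterization, and you make explicit the (correct) step of pushing the metric forward along the homeomorphism.
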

\begin{proof}
By 
\cite[Corollary 6.5]{MR1321597}, 
there exists a closed subspace $F$ of $X$ that is 
homeomorphic to the Cantor set. 
Since $R$ is  $(\yocantorc, \yorbd{a})$-admissible, 
we can take 
$d\in \yordimsetu{F}{R}{\yorbd{a}}$. 
Applying 
Lemma \ref{lem:frac1} to 
$X$, $F$, and $d$, 
we obtain $D\in \yordimsetu{X}{R}{\yorbd{a}}$.
\end{proof}

We next
verify that $\yordimsetu{X}{R}{\yorbd{a}}$ is 
dense in $\ult{X}{R}$. 
\begin{thm}\label{thm:densefrac}
If 
 $X$ is  an uncountable  
compact ultrametrizable space, 
 $\yorbd{a}\in \yorlset$, 
 and 
$R$ is  a $(\yocantorc, \yorbd{a})$-admissible 
characteristic range set, 
then the set $\yordimsetu{X}{R}{\yorbd{a}}$ is 
dense in $\ult{X}{R}$. 
\end{thm}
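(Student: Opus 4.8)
The plan is to approximate a given $d\in\ult{X}{R}$ by amalgamating, over a fine finite clopen partition of $X$, ultrametrics of prescribed dimension on the pieces, and then to read off the four global dimensions from the finite stability in Lemma~\ref{lem:finstab}. Fix $\epsilon\in(0,\infty)$. Since $R$ is characteristic, I would choose $r_{0}\in R\setminus\{0\}$ with $r_{0}\le\epsilon$ and set $S=R\cap[0,r_{0}]$; this $S$ is again characteristic, and by Lemma~\ref{lem:r:r} it remains $(\yocantorc,\yorbd{a})$-admissible. Using the compactness of $X$ together with Lemma~\ref{lem:ultraopcl}, I would select finitely many points $p_{0},\dots,p_{k}$ so that the closed balls $B_{i}=B(p_{i},r_{0};d)$ form a mutually disjoint clopen covering of $X$; then I set $I=\{0,\dots,k\}$ and define $r\in\ult{I}{R}$ by $r(i,j)=d(p_{i},p_{j})$.

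On each piece I would install an ultrametric whose dimension vector is either the target $\yorbd{a}$ or the zero vector $\yorzeron$, with values in $S$ so that every diameter is at most $r_{0}$. Since $X$ is uncountable and is a finite union of the $B_{i}$, at least one $B_{i}$ is uncountable. For each uncountable $B_{i}$, Lemma~\ref{lem:dimext}, applied with the range set $S$, yields $e_{i}\in\yordimsetu{B_{i}}{S}{\yorbd{a}}$. For each countable $B_{i}$, I would instead invoke the extension theorem Lemma~\ref{lem:frac1} with the separable ultrametrizable space $B_{i}$, the closed subset $\{p_{i}\}$, and the profile $\yorzeron$: a one-point space trivially lies in $\yordimsetu{\{p_{i}\}}{S}{\yorzeron}$, so the lemma produces $e_{i}\in\yordimsetu{B_{i}}{S}{\yorzeron}$. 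In either case $e_{i}\in\ult{B_{i}}{S}\subseteq\ult{B_{i}}{R}$ with $\yodiam_{e_{i}}(B_{i})\le r_{0}$.

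Now $\yoquin=(X,I,r,\{B_{i}\}_{i\in I},\{e_{i}\}_{i\in I})$ is a pleasant $R$-amalgamation system, and I would let $D$ be the associated ultrametric from Proposition~\ref{prop:amalgam}. By part~\ref{item:amal:1} we have $D\in\ult{X}{R}$, and since $r(i,j)=d(p_{i},p_{j})$ while $\yodiam_{d}(B_{i})\le r_{0}$ and $\yodiam_{e_{i}}(B_{i})\le r_{0}$, part~\ref{item:amal:2} gives $\umetdis_{X}^{R}(D,d)\le r_{0}\le\epsilon$. It then remains to compute the dimensions of $(X,D)$. The amalgamation formula shows $D|_{B_{i}^{2}}=e_{i}$, so for each of $\mathbf{Dim}\in\{\hdim,\pdim,\ubdim,\adim\}$ the finite stability of Lemma~\ref{lem:finstab} gives $\mathbf{Dim}(X,D)=\max_{i\in I}\mathbf{Dim}(B_{i},e_{i})$. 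Each uncountable piece contributes the corresponding component of $\yorbd{a}$, each countable piece contributes $0$, and because every component of $\yorbd{a}$ is nonnegative and at least one piece is uncountable, the maximum equals that component of $\yorbd{a}$. Hence $D\in\yordimsetu{X}{R}{\yorbd{a}}$, and since $d$ and $\epsilon$ were arbitrary, density follows.

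The step requiring the most care is forcing the approximant to have \emph{exactly} the profile $\yorbd{a}$ rather than something larger: the metric inherited from $d$ on the non-distinguished pieces must be discarded and replaced by dimension-zero ultrametrics, and one must notice that pieces which happen to be countable cannot carry a positive-dimensional profile at all. The resolution is that Lemma~\ref{lem:frac1} does not require the ambient space to be uncountable, so seeding it with a single point produces a compatible $\yorzeron$-profile ultrametric on every piece; after that, Lemma~\ref{lem:finstab} assembles the global dimensions as a maximum, and the presence of at least one uncountable piece makes that maximum equal to $\yorbd{a}$.
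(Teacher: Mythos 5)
Your proposal is correct and follows essentially the same route as the paper: refine to a disjoint clopen covering by small balls, install an ultrametric with profile $\yorbd{a}$ on an uncountable piece via Lemma~\ref{lem:dimext} and profile $\yorzeron$ elsewhere, amalgamate via Proposition~\ref{prop:amalgam}, and conclude with the finite stability of Lemma~\ref{lem:finstab}. The only (harmless) deviations are that you place the profile $\yorbd{a}$ on every uncountable piece rather than a single one, and you justify the existence of $\yorzeron$-profile ultrametrics on countable pieces explicitly by seeding Lemma~\ref{lem:frac1} with a singleton, a point the paper's proof leaves implicit.
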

\begin{proof}
Take $d\in \ult{X}{R}$ and $\epsilon\in (0, \infty)$.
We also take $\eta\in R\setminus \{0\}$ with 
$\eta\le \epsilon$ and 
define $S_{\eta}=R\cap [0, \eta]$. 
Then Lemma \ref{lem:r:r} implies 
that $S_{\eta}$ is $(\yocantorc, \yorbd{a})$-admissible. 
Since $X$ is compact, 
there exists a sequence  $\{p_{i}\}_{i=0}^{n}$ such that 
the family $\{B(p_{i}, \epsilon)\}_{i=0}^{n}$ is 
a mutually disjoint covering of $X$. 
Put $I=\{0, \dots, n\}$. 
In this setting, 
each $B(p_{i}, \epsilon)$ is also 
compact and  ultrametrizable. 
Since $X$ is uncountable, 
there exists $k\in I$ such that 
$B(p_{k}, \epsilon)$ is uncountable. 
Lemma \ref{lem:dimext} enables us to 
take 
$e_{k}\in 
\yordimsetu{B(p_{k}, \epsilon)}{S_{\eta}}{\yorbd{a}}$. 
For each 
$i\in I\setminus \{k\}$, 
we have take 
$e_{i}\in \yordimsetu{B(p_{i}, \epsilon)}{S_{\eta}}{\yorzeron}$.
We define a metric  $h$ on $I$ by 
$h(i, j)=d(p_{i}, p_{j})$. Note that 
$h$ generates the discrete topology on $I$.
Then  $\yoquin 
=(X, I, h, \{B(p_{i}, \epsilon)\}_{i\in I}, \{p_{i}\}_{i\in I})$ is 
a
pleasant $R$-amalgamation system. 
Let $D$ be the ultrametric associated with $\yoquin$. 
By \ref{item:amal:2} in Proposition 
\ref{prop:amalgam}, we have 
$\umetdis_{X}^{R}(d, D)\le \epsilon$. 
Due to $e_{k}\in \yordimsetu{B(p_{k}, \epsilon)}{S_{\eta}}{\yorbd{a}}$
and 
$e_{i}\in \yordimsetu{B(p_{i}, \epsilon)}{S_{\eta}}{\yorzeron}$ for all $i\in I\setminus \{k\}$, 
Lemma \ref{lem:finstab} implies 
$D\in \yordimsetu{X}{R}{\yorbd{a}}$. 
This finishes the proof. 
\end{proof}

\subsubsection{Topological weights of spaces of ultrametrics}
We next estimate the 
topological weights of spaces of continuous pseudo-ultrametrics. 
Most of arguments
 in this subsection are 
analogous with  Subsection \ref{subsec:funcweight}. 
By the definition of $\yocmet{X}{R}$, we notice the following lemma:
\begin{lem}\label{lem:sbsbspsp}
If $X$ is a topological space and 
range sets $R$ and $S$ satisfy 
$S\yosub R$, then 
$\yocmet{X}{S}$ is a metric subspace 
of $\yocmet{X}{R}$. 
\end{lem}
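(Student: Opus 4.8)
The plan is to verify the two conditions that make $\yocmet{X}{S}$ a metric subspace of $\yocmet{X}{R}$: first that $\yocmet{X}{S}\yosub \yocmet{X}{R}$ as a set, and second that the restriction of $\umetdis_{X}^{R}$ to pairs coming from $\yocmet{X}{S}$ coincides with $\umetdis_{X}^{S}$. This parallels the argument for Lemma \ref{lem:subspace} concerning the function spaces $\yomaps{X}{S}$ and $\yomaps{X}{R}$.

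For the set inclusion, I would take $d\in \yocmet{X}{S}$. Since $S\yosub R$, the pseudo-ultrametric $d$ takes all its values in $R$, so it is an $R$-valued pseudo-ultrametric on $X$; and because the inclusion $S\hookrightarrow R$ is continuous for the Euclidean topologies, continuity of $d\colon X^{2}\to S$ yields continuity of $d\colon X^{2}\to R$. Hence $d\in \yocmet{X}{R}$, which gives $\yocmet{X}{S}\yosub \yocmet{X}{R}$.

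For the agreement of the metrics, I would fix $d, e\in \yocmet{X}{S}$ and show $\umetdis_{X}^{S}(d, e)=\umetdis_{X}^{R}(d, e)$. Since any admissible $\epsilon\in S$ appearing in the definition of $\umetdis_{X}^{S}$ also lies in $R$, the inequality $\umetdis_{X}^{R}(d, e)\le \umetdis_{X}^{S}(d, e)$ is immediate. For the reverse inequality I would invoke Proposition \ref{prop:spectral}, which identifies $\umetdis_{X}^{R}$ with the restriction of $\yomaindis$; combined with Proposition \ref{prop:maindisprop}, the common value $\yomaindis(d, e)$ lies in $\yoimage{d}\cup \yoimage{e}\cup \{0\}$, a set contained in $S$ because $d$ and $e$ are $S$-valued. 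Thus $\umetdis_{X}^{R}(d, e)$ is itself an admissible value of $\epsilon$ in $S$ for $\umetdis_{X}^{S}$, giving $\umetdis_{X}^{S}(d, e)\le \umetdis_{X}^{R}(d, e)$ and hence equality.

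The only genuinely delicate point is this last step: enlarging the range set from $S$ to $R$ could a priori shrink the infimum defining $\umetdis$, so one must confirm that the optimal $\epsilon$ is actually attained inside $S$. This is exactly what Propositions \ref{prop:spectral} and \ref{prop:maindisprop} supply, by pinning the value of the ultrametric down to $\yoimage{d}\cup \yoimage{e}\cup \{0\}\yosub S$.
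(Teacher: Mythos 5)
Your proposal is correct, and it is in fact more careful than the paper, which states this lemma with no proof at all (prefaced only by ``By the definition of $\yocmet{X}{R}$, we notice the following lemma''). The set inclusion is routine, and you have correctly isolated the one step that is not purely formal: since $\umetdis_{X}^{S}$ and $\umetdis_{X}^{R}$ are defined as infima over $\epsilon$ ranging in $S$ and in $R$ respectively, enlarging the range set could a priori lower the infimum, and ruling this out via Propositions \ref{prop:spectral} and \ref{prop:maindisprop} --- the common value $\yomaindis(d, e)$ is attained as a minimum and lies in $\yoimage{d}\cup\yoimage{e}\cup\{0\}\yosub S$ --- is exactly the right fix. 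The only caveat is that Proposition \ref{prop:maindisprop} requires the domain to be compact (so that $X^{2}$ is compact and the images of $d$ and $e$ are tenuous), while the lemma is nominally stated for an arbitrary topological space $X$; your argument therefore covers the compact case, which is the paper's standing convention whenever $\umetdis_{X}^{R}$ is discussed and the only case in which the lemma is invoked. For non-compact $X$ the infimum defining $\umetdis_{X}^{S}$ need not be attained inside $S$, so the agreement of the two metrics would genuinely depend on the compactness you are implicitly using; it is worth being explicit about that hypothesis.
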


\begin{lem}\label{lem:metricscard}
Let 
 $X$ be $0$-dimensional compact 
Hausdorff space
such that 
 $\yoweight{X}$ is infinite, 
and 
$R$ be a non-characteristic range set
with $2\le \card(R)$.
Then we have 
$\card(\yocmet{X}{R})=
\max\{\yoweight{X}, \card(R)\}$. 
\end{lem}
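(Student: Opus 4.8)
The plan is to mirror the proof of Proposition \ref{prop:mapscard}, since Proposition \ref{prop:spectral} tells us that $\yocmet{X}{R}$ sits inside $\yomaps{X\times X}{R}$ as a metric subspace with the same ultrametric $\yomaindis=\umetdis_{X}^{R}$. Thus $\yocmet{X}{R}\yosub \yomaps{X^{2}}{R}$, and because $X$ is $0$-dimensional compact Hausdorff, so is $X^{2}$ (the product of two such spaces), with $\yoweight{X^{2}}=\yoweight{X}$ when $\yoweight{X}$ is infinite. Consequently Proposition \ref{prop:mapscard} applied to $X^{2}$ immediately yields $\card(\yomaps{X^{2}}{R})=\max\{\yoweight{X^{2}},\card(R)\}=\max\{\yoweight{X},\card(R)\}$, and since $\yocmet{X}{R}$ is a subset of this, the upper bound $\card(\yocmet{X}{R})\le \max\{\yoweight{X},\card(R)\}$ is free.

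The substantive work is the reverse inequality $\max\{\yoweight{X},\card(R)\}\le \card(\yocmet{X}{R})$, for which I must exhibit that many distinct continuous $R$-valued pseudo-ultrametrics exist. First I would fix $r\in R\setminus\{0\}$ and, for each clopen subset $O$ of $X$, define a pseudo-ultrametric $d_{O,r}$ by declaring $d_{O,r}(x,y)=r$ when exactly one of $x,y$ lies in $O$, and $d_{O,r}(x,y)=0$ otherwise; this is the pullback of the two-point ultrametric under the characteristic-function map, so it is a genuine continuous $R$-valued pseudo-ultrametric. Distinct pairs $(O,r)$ give distinct pseudo-ultrametrics, and since $\card(\mathcal{O})=\yoweight{X}$ by Lemma \ref{lem:22-22}, varying $O$ over clopen sets and $r$ over $R\setminus\{0\}$ produces $\yoweight{X}\times\card(R)=\max\{\yoweight{X},\card(R)\}$ many elements of $\yocmet{X}{R}$.

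The main obstacle I anticipate is checking that $d_{O,r}$ indeed satisfies the strong triangle inequality and is continuous; the strong triangle inequality follows because $d_{O,r}$ takes only the values $\{0,r\}$ and any two-valued symmetric function that is an equivalence-relation indicator (here, the relation ``both in $O$ or both outside $O$'') is automatically an ultrametric, while continuity follows from $O$ being clopen (so the characteristic function into the nearly discrete space $\{0,r\}$ is continuous). One subtlety is that when $\card(R)>\yoweight{X}$ we need the full cardinality of $R$; here I would instead fix a single clopen $O$ with $\emptyset\neq O\neq X$ (available since $X$ is infinite) and vary $r$ over $R\setminus\{0\}$, noting the $d_{O,r}$ are pairwise distinct, which gives $\card(R)\le\card(\yocmet{X}{R})$. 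Combining the two injections yields the lower bound, and together with the upper bound from Proposition \ref{prop:mapscard} the equality follows.
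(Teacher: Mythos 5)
Your proposal is essentially the paper's proof: the upper bound comes from viewing $\yocmet{X}{R}$ inside $\yomaps{X^{2}}{R}$ and reusing the encoding of Proposition \ref{prop:mapscard}, and the lower bound comes from the two-valued pseudo-ultrametrics determined by a clopen set $O$ and a value $r\in R\setminus\{0\}$ (your formula, with $d_{O,r}(x,y)=r$ exactly when $O$ separates $x$ from $y$, is the intended one). One small inaccuracy: your claim that distinct pairs $(O,r)$ give distinct pseudo-ultrametrics is false, since $d_{O,r}=d_{X\setminus O,\,r}$ (and $d_{\emptyset,r}=d_{X,r}=0$ for every $r$); the paper repairs exactly this by first passing to a subfamily $\mathcal{H}$ of the clopen sets containing precisely one member of each complementary pair $\{O, X\setminus O\}$, which still has cardinality $\yoweight{X}$ by Lemma \ref{lem:22-22}. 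Because the identification $(O,r)\mapsto d_{O,r}$ is at most two-to-one on an infinite index set, your cardinality conclusion $\yoweight{X}\times\card(R)\le\card(\yocmet{X}{R})$ survives, so this is a cosmetic rather than a fatal defect; your separate treatment of the case $\yoweight{X}<\card(R)$ by fixing one proper nonempty clopen $O$ is not needed but is harmless.
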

\begin{proof}
Put $\tau=\max\{\yoweight{X}, \card(R)\}$. 
Let $\mathcal{O}$ be the 
set of all clopen subsets of $X$. 
Similarly to Proposition \ref{prop:mapscard},  we have 
$\card(\yocmet{X}{R})\le \tau$. 
We next show the converse inequality. 
Let $\mathcal{H}$ be a subset of 
$\mathcal{O}$ such that 
\begin{enumerate}
\item 
if $O\in \mathcal{H}$, then $X\setminus O\not \in \mathcal{H}$;
\item for every $O\in \mathcal{O}$, 
we have $O\in \mathcal{H}$ or $X\setminus O\in \mathcal{H}$.
\end{enumerate}
Notice that $\card(\mathcal{H})=\yoweight{X}$. 
For each $O\in \mathcal{H}$, 
and $r\in R\setminus \{0\}$, 
we define $u_{O, r}\in \yocmet{X}{R}$ by 
\[
u_{O, r}=
\begin{cases}
r  & \text{if $x, y\in O$, or $x, y\in X\setminus O$;}\\
0 & \text{otherwise.}
\end{cases}
\]
If $O, O^{\prime}\in \mathcal{H}$ and 
$r, r^{\prime}\in R\setminus \{0\}$
satisfies $(O, r)\neq (O^{\prime}, r^{\prime})$, 
then $u_{O, r}\neq u_{O^{\prime}, r^{\prime}}$. 
Therefore
 we have $\tau \le \card(\yocmet{X}{R})$. 
\end{proof}

Corresponding to 
Lemma \ref{lem:metricscard}, 
we can prove the next lemma:
\begin{lem}\label{lem:metinfiniteimage}
Let $X$ be a $0$-dimensional 
compact Hausdorff space 
such that $\yoweight{X}$ is infinite, 
and $R$ be a range set. 
Let $G$ be  the set of all 
$d\in \yocmet{X}{R}$
such that $\yoimage{d}$ is finite. 
Then the set $G$ is 
dense in $\yocmet{X}{R}$ and  satisfies 
$\card(G)\le \max\{\yoweight{X}, \card(R)\}$
\end{lem}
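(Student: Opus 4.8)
The plan is to establish the two assertions separately: the denseness of $G$ in $(\yocmet{X}{R}, \umetdis_{X}^{R})$, and the cardinality bound $\card(G)\le \max\{\yoweight{X}, \card(R)\}$. Throughout I may assume $2\le \card(R)$, since otherwise $R=\{0\}$, the space $\yocmet{X}{R}$ consists of the single zero pseudo-ultrametric, $G=\yocmet{X}{R}$, and both assertions are immediate.

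For the denseness, the idea is truncation. Fix $d\in \yocmet{X}{R}$ and $\epsilon\in (0,\infty)$, define $\phi\colon R\to R$ by $\phi(t)=t$ if $t>\epsilon$ and $\phi(t)=0$ if $t\le \epsilon$, and set $d'=\phi\circ d$. First I would check that $d'$ again lies in $\yocmet{X}{R}$: the map $\phi$ is non-decreasing and preserves the maximum operator, i.e.\ $\phi(a\lor b)=\phi(a)\lor \phi(b)$, whence $d'(x,z)=\phi(d(x,z))\le \phi(d(x,y)\lor d(y,z))=d'(x,y)\lor d'(y,z)$, so $d'$ is an $R$-valued pseudo-ultrametric; moreover $\phi$ is continuous from $(R,\yodisdis_{R})$ to itself, so by Proposition \ref{prop:spectral} the composite $d'=\phi\circ d$ is continuous, giving $d'\in \yocmet{X}{R}$. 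Next, by Corollary \ref{cor:spec} the image $\yoimage{d}$ is tenuous, so only finitely many of its values exceed $\epsilon$; hence $\yoimage{d'}=\{0\}\cup\{\,v\in \yoimage{d}\mid v>\epsilon\,\}$ is finite and $d'\in G$. Finally, Proposition \ref{prop:supsup} gives $\umetdis_{X}^{R}(d,d')=\sup_{(x,y)}\yodisdis_{R}(d(x,y),d'(x,y))\le \epsilon$, because the two values agree when $d(x,y)>\epsilon$, while if $d(x,y)\le \epsilon$ then $d'(x,y)=0$ and $\yodisdis_{R}(d(x,y),0)=d(x,y)\le \epsilon$. As $\epsilon$ is arbitrary, $d$ lies in the closure of $G$, so $G$ is dense. (Alternatively, one may build $d'$ as the pseudo-ultrametric associated with the amalgamation system whose blocks are the balls $B(p_i,\epsilon;d)$ and whose block metrics are identically $0$, exactly as in the proof of Proposition \ref{prop:yocsetdense}, and invoke part \ref{item:amal:2} of Proposition \ref{prop:amalgam}.)

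For the cardinality bound, I would reduce to the function-space estimate already available. By Proposition \ref{prop:spectral} we have $\yocmet{X}{R}\yosub \yomaps{X\times X}{R}$, and every $d\in G$ is a finite-image element of $\yomaps{X\times X}{R}$ with the same image. The product $X\times X$ is again a $0$-dimensional compact Hausdorff space, and $\yoweight{X\times X}=\yoweight{X}$: the inequality $\le$ follows by taking products of a base of size $\yoweight{X}$ together with $\yoweight{X}\cdot\yoweight{X}=\yoweight{X}$, and $\ge$ from the closed embedding of $X$ as a slice $X\times\{p\}$. Applying Lemma \ref{lem:finiteimage} to $X\times X$ and $R$ therefore bounds the set of all finite-image elements of $\yomaps{X\times X}{R}$ by $\max\{\yoweight{X\times X},\card(R)\}=\max\{\yoweight{X},\card(R)\}$, and since $G$ is contained in this set, $\card(G)\le \max\{\yoweight{X},\card(R)\}$.

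The main obstacle I anticipate is verifying that truncation stays inside $\yocmet{X}{R}$: one must confirm both that the strong triangle inequality survives the pointwise composition with $\phi$, which rests on $\phi$ being monotone and maximum-preserving, and that continuity is retained. The latter is where the two topologies on $R$ interact, since $\phi$ is continuous for the nearly discrete topology but not for the Euclidean one; the correct viewpoint is to regard $d$ as a map into $(R,\yodisdis_{R})$ via Proposition \ref{prop:spectral}, compose there, and transfer back. Everything else is routine bookkeeping with tenuous images and the weight computation for $X\times X$.
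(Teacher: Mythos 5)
Your proof is correct. The paper offers no written proof of this lemma (it only remarks that it is proved ``corresponding to'' Lemma \ref{lem:metricscard} and its neighbours), and your truncation argument is exactly the natural pseudo-ultrametric analogue of how Lemma \ref{lem:finiteimage} gets density from Proposition \ref{prop:maindisprop}: cutting $d$ off below $\epsilon$ preserves the strong triangle inequality because $t\mapsto \phi(t)$ is monotone and max-preserving, and the tenuousness of $\yoimage{d}$ (Corollary \ref{cor:spec}) makes the truncated image finite. One small step you should make explicit in the ``transfer back'': the nearly discrete topology on $R$ is finer than the Euclidean one (positive points are $\yodisdis_{R}$-isolated and the $\yodisdis_{R}$-balls about $0$ are the sets $[0,r)\cap R$), so $\yodisdis_{R}$-continuity of $\phi\circ d$ does imply the Euclidean continuity required by the definition of $\yocmet{X}{R}$; alternatively, observe directly that $d'^{-1}(v)=d^{-1}(v)$ is clopen for each of the finitely many values $v>\epsilon$ and $d'^{-1}(0)=d^{-1}([0,\epsilon])$ is clopen. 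For the cardinality bound, your reduction to Lemma \ref{lem:finiteimage} on $X\times X$ (using $\yoweight{X\times X}=\yoweight{X}$ and Proposition \ref{prop:spectral}) is a clean shortcut; the paper's intended route is presumably to rerun the injective encoding of Lemma \ref{lem:metricscard}, sending $d\in G$ to the finite list of clopen sets $d^{-1}(v)$ together with the finite list of values, which gives the same bound. Both work; yours reuses more of the existing machinery.
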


The following proposition can be proven in 
the same  way to 
Proposition 
\ref{prop:mapsweight}. 

\begin{prop}\label{prop:metricweight}
Let
$X$ be a $0$-dimensional 
compact Hausdorff space 
such that 
$\yoweight{X}$ is infinite, 
and 
 $R$ be a range set. 
Then 
$\yoweight{\yocmet{X}{R}}=
\max\{\yoweight{X}, \card(R)\}$. 
\end{prop}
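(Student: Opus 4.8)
The plan is to reproduce, almost verbatim, the two-sided estimate from the proof of Proposition \ref{prop:mapsweight}, replacing each function-space lemma by its pseudo-ultrametric analogue. Write $\tau=\max\{\yoweight{X},\card(R)\}$, and (as in Proposition \ref{prop:mapsweight}) work under the standing assumption $2\le\card(R)$. For the upper bound $\yoweight{\yocmet{X}{R}}\le\tau$, I would appeal to Lemma \ref{lem:metinfiniteimage}, which provides a dense subset $G$ of $\yocmet{X}{R}$ with $\card(G)\le\tau$; since the weight of a metric space is bounded by the cardinality of any dense subset (Lemma \ref{lem:weightle}), this yields $\yoweight{\yocmet{X}{R}}\le\card(G)\le\tau$ at once.

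For the lower bound $\tau\le\yoweight{\yocmet{X}{R}}$, the idea is to exhibit a positively separated subset of $\yocmet{X}{R}$ of cardinality $\tau$ and invoke Lemma \ref{lem:leweight}. The key observation to isolate first is the following separation estimate: if $S$ is a range set all of whose nonzero elements are bounded below by some $s>0$, then any two distinct $d,e\in\yocmet{X}{S}$ satisfy $\umetdis_{X}^{R}(d,e)\ge s$. Indeed, $d\ne e$ forces $d(x,y)\ne e(x,y)$ for some $(x,y)$, whence $\yodisdis_{S}(d(x,y),e(x,y))=d(x,y)\lor e(x,y)\ge s$ because the larger of the two values is nonzero and thus $\ge s$; applying Proposition \ref{prop:supsup} (viewing $d,e$ as maps on the compact space $X\times X$) together with $\umetdis_{X}^{R}=\yomaindis|_{\yocmet{X}{R}}$ from Proposition \ref{prop:spectral} gives $\umetdis_{X}^{R}(d,e)=\sup_{(x,y)}\yodisdis_{S}(d(x,y),e(x,y))\ge s$. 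Hence $\yocmet{X}{S}$ is an $s$-separated subset, and it is a metric subspace of $\yocmet{X}{R}$ by Lemma \ref{lem:sbsbspsp}.

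I would then split into the same two cases as in Proposition \ref{prop:mapsweight}. In Case 1, where $\card(R)\le\aleph_{0}$, pick $r\in R\setminus\{0\}$ and take $S=\{0,r\}$: the subspace $\yocmet{X}{\{0,r\}}$ is $r$-separated by the estimate above, and Lemma \ref{lem:metricscard} gives $\card(\yocmet{X}{\{0,r\}})=\max\{\yoweight{X},2\}=\yoweight{X}=\tau$, using that $\yoweight{X}$ is infinite; Lemma \ref{lem:leweight} then forces $\tau\le\yoweight{\yocmet{X}{R}}$. In Case 2, where $\aleph_{0}<\card(R)$, I would partition $(0,\infty)$ into countably many bounded intervals to produce $s,t\in(0,\infty)$ with $\card(R\cap(s,t))=\card(R)$, and set $S=(R\cap(s,t))\cup\{0\}$; this is a non-characteristic range set with $\card(S)=\card(R)$ whose nonzero values lie in $[s,t]$, so $\yocmet{X}{S}$ is $s$-separated of cardinality $\max\{\yoweight{X},\card(R)\}=\tau$ by Lemma \ref{lem:metricscard}, and Lemma \ref{lem:leweight} again yields $\tau\le\yoweight{\yocmet{X}{R}}$.

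Since every ingredient (the density and cardinality bounds of Lemmas \ref{lem:metinfiniteimage} and \ref{lem:metricscard}, and the subspace fact of Lemma \ref{lem:sbsbspsp}) is already established for spaces of pseudo-ultrametrics, I do not expect a genuine obstacle here: the proof is a transcription of the function-space argument. The one point that warrants care—and the only place where the pseudo-ultrametric setting differs from the function setting in spirit—is the separation estimate in the second paragraph, namely recognizing that two distinct continuous pseudo-ultrametrics valued in a set bounded away from $0$ are automatically far apart in $\umetdis_{X}^{R}$; once that is in hand, the rest follows mechanically from Lemmas \ref{lem:weightle} and \ref{lem:leweight}.
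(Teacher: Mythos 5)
Your proof is correct and is essentially the paper's own argument: the paper gives no separate proof of Proposition \ref{prop:metricweight}, stating only that it ``can be proven in the same way'' as Proposition \ref{prop:mapsweight}, and your transcription (upper bound via Lemmas \ref{lem:metinfiniteimage} and \ref{lem:weightle}, lower bound via the $s$-separation of $\yocmet{X}{S}$ for $S$ bounded away from $0$ together with Lemmas \ref{lem:metricscard} and \ref{lem:leweight}, split into the same two cases on $\card(R)$) is exactly that intended proof. Your remark that the implicit hypothesis $2\le\card(R)$ must be carried over is a fair catch, since the statement is vacuously off for $R=\{0\}$.
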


Similarly to
Proposition 
\ref{prop:metricweight}, 
we obtain: 
\begin{prop}\label{prop:metballweight}
Let 
$X$ be a $0$-dimensional compact 
Hausdorff space
such that 
 $\yoweight{X}$ is infinite, 
$R$ be a range set and 
$r\in R\setminus \{0\}$. 
If every clopen subset $G$ of $X$ satisfies 
$\yoweight{G}=\yoweight{X}$, 
and either of 
$\card(R)\le \yoweight{X}$ or 
$\card(R\cap [0, r])=\card(R)$
 satisfied, 
then 
for all $d\in \yocmet{X}{R}$, 
we have 
$\yoweight{B(d, r)}=\max\{\yoweight{X}, \card(R)\}$. 
\end{prop}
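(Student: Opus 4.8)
The plan is to establish the two inequalities $\yoweight{B(d, r)} \le \tau$ and $\tau \le \yoweight{B(d, r)}$ separately, where $\tau = \max\{\yoweight{X}, \card(R)\}$. The upper bound is immediate: $B(d, r)$ is a metric subspace of $\yocmet{X}{R}$, and the weight of a subspace never exceeds that of the ambient space, so Proposition \ref{prop:metricweight} gives $\yoweight{B(d, r)} \le \yoweight{\yocmet{X}{R}} = \tau$. All the substance is therefore in the lower bound, and the idea is to realise a space whose weight is already known to equal $\tau$ as an isometric subspace of $B(d, r)$.

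For the lower bound I would first fix the coarse structure of $d$ at scale $r$ and exploit the freedom to perturb below that scale. Since $X$ is compact and the closed $r$-balls are clopen (Lemma \ref{lem:ultraopcl}) and pairwise equal or disjoint, finitely many distinct such balls $B_i = B(p_i, r; d)$, $i \in I = \{0, \dots, n\}$, partition $X$. Put $R' = R \cap [0, r]$; this is a range set containing $\{0, r\}$, and $B_0$ is a $0$-dimensional compact Hausdorff space which, by hypothesis, satisfies $\yoweight{B_0} = \yoweight{X}$. For each $e \in \yocmet{B_0}{R'}$ I would form the $R$-amalgamation system $\yoquin_e = (X, I, h, \{B_i\}_{i \in I}, \{e_i\}_{i \in I})$ with $h(i,j) = d(p_i, p_j)$, $e_0 = e$, and $e_i = d|_{B_i^2}$ for $i \ne 0$, and let $D_e \in \yocmet{X}{R}$ be the associated pseudo-ultrametric (Proposition \ref{prop:amalgam}). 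Because $\yodiam_d(B_i) \le r$ and $\yodiam_{e_i}(B_i) \le r$ for every $i$ (for $i = 0$ this is where $e$ being $R'$-valued is used) and $h(i,j) = d(p_i, p_j)$, part \ref{item:amal:2} of Proposition \ref{prop:amalgam} yields $\umetdis_X^R(D_e, d) \le r$, so the map $\Phi(e) = D_e$ lands in $B(d, r)$.

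The heart of the argument — and the step I expect to require the most care — is to check that $\Phi$ is an isometric embedding of $\yocmet{B_0}{R'}$ into $B(d, r)$, rather than merely a nonexpansive injection. The point is that a perturbation confined to $B_0$ must not leak into the cross-distances: for $x \in B_i$ and $y \in B_j$ with $i \ne j$ the amalgamation formula gives $D_e(x, y) = e_i(x, p_i) \lor h(i,j) \lor e_j(p_j, y)$, and since distinct $r$-balls satisfy $d(p_i, p_j) > r$ while the other two terms are $\le r$, the value collapses to $h(i,j) = d(p_i, p_j)$, independent of $e$. Together with $D_e = d$ on each $B_i^2$ with $i \ne 0$, this shows $D_e$ and $D_{e'}$ differ only on $B_0^2$, whence Propositions \ref{prop:supsup} and \ref{prop:spectral} give $\umetdis_X^R(D_e, D_{e'}) = \umetdis_{B_0}^R(e, e')$.

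Finally I would read off the weight. The image $\Phi(\yocmet{B_0}{R'})$ is a subspace of $B(d, r)$ isometric to $\yocmet{B_0}{R'}$, so $\yoweight{B(d, r)} \ge \yoweight{\yocmet{B_0}{R'}}$, and Proposition \ref{prop:metricweight} applied to $B_0$ and $R'$ computes the latter as $\max\{\yoweight{X}, \card(R \cap [0, r])\}$. Under the first alternative hypothesis $\card(R) \le \yoweight{X}$ this equals $\yoweight{X} = \tau$, and under the second alternative $\card(R \cap [0, r]) = \card(R)$ it equals $\max\{\yoweight{X}, \card(R)\} = \tau$; either way the lower bound $\tau \le \yoweight{B(d, r)}$ follows, completing the proof. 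The only genuinely delicate points are the domination of the cross-terms described above and keeping track of which alternative of the hypothesis supplies the needed cardinality.
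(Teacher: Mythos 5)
Your proof is correct, and it is essentially the argument the paper intends: the paper omits the proof with ``Similarly to Proposition \ref{prop:metricweight}'', and your construction --- perturbing $d$ only inside one closed $r$-ball $B_0$ by metrics valued in $R\cap[0,r]$ and checking via the amalgamation formula that the cross-distances are dominated by $d(p_i,p_j)>r$ --- is exactly how one transfers the separated families of Proposition \ref{prop:metricweight} into $B(d,r)$. Packaging this as an isometric embedding of $\yocmet{B_0}{R\cap[0,r]}$ into $B(d,r)$ is a clean and complete way to carry out that reduction, and your case analysis on the two cardinality hypotheses is exactly what is needed.
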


\subsection{Injective ultrametric spaces}\label{subsec:inj}
We show some properties of 
injective ultrametric spaces. 
For a class $\yochara{C}$ of finite metric spaces, 
we say that  a metric space $(X, d)$ satisfies 
the \emph{one-point extension property 
for $\yochara{C}$} if 
for every finite metric space 
$(A\sqcup \{\omega\}, e)$ in $\yochara{C}$ 
(with the specific point $\omega$), 
and for every isometric embedding 
$\phi\colon A\to X$, 
there exists a point $q\in X$ such that 
$d(\phi(a), q)=e(a, \omega)$ for all $a\in A$. 
Using induction, 
we obtain: 
\begin{lem}\label{lem:optext}
Let $\yochara{C}$ be a
class of finite metric spaces. 
A metric space $(X, d)$  is 
$\yochara{C}$-injective if and only if it satisfies the 
one-point extension property for $\yochara{C}$. 
\end{lem}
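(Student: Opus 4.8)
The plan is to prove the equivalence in Lemma \ref{lem:optext} by establishing the two implications separately, with the forward direction being immediate and the reverse direction requiring an inductive argument.

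First I would observe that the one-point extension property is a special case of $\yochara{C}$-injectivity: given $(A\sqcup\{\omega\}, e)\in\yochara{C}$ and an isometric embedding $\phi\colon A\to X$, take $B=A\sqcup\{\omega\}$ with the embedding $\iota\colon A\to B$ being the natural inclusion, which is certainly an isometric embedding. Applying $\yochara{C}$-injectivity to $\iota$ and $\phi$ yields an isometric embedding $\theta\colon B\to X$ with $\theta\circ\iota=\phi$; then $q=\theta(\omega)$ satisfies $d(\phi(a),q)=e(a,\omega)$ for all $a\in A$ since $\theta$ is isometric and $\theta(a)=\phi(a)$. This gives the ``only if'' direction without difficulty.

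For the converse, I would argue by induction on $\card(B\setminus\phi(A))$, where $\phi\colon A\to B$ and $\psi\colon A\to X$ are the given isometric embeddings from the definition of injectivity. The base case $\card(B\setminus\phi(A))=0$ means $\phi$ is onto, so $\theta=\psi\circ\phi^{-1}$ works. For the inductive step, pick one point $\omega\in B\setminus\phi(A)$ and set $A'=\phi(A)\cup\{\omega\}$, viewed as a finite metric subspace of $B$. Since $A'\in\yochara{C}$ (assuming $\yochara{C}$ is closed under taking finite subspaces, which holds for $\youfin(R)$) and $\psi\circ\phi^{-1}\colon\phi(A)\to X$ is an isometric embedding, the one-point extension property supplies a point $q\in X$ realizing the correct distances to $\omega$. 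Extending $\psi\circ\phi^{-1}$ by sending $\omega\mapsto q$ gives an isometric embedding $\psi'\colon A'\to X$, and now $\card(B\setminus A')=\card(B\setminus\phi(A))-1$, so the induction hypothesis applied to the inclusion $A'\hookrightarrow B$ and the embedding $\psi'$ produces the desired $\theta\colon B\to X$ with $\theta|_{\phi(A)}=\psi\circ\phi^{-1}\circ\phi=\psi$, completing the step.

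The main subtlety I anticipate is verifying that the extended map $\psi'$ is genuinely an \emph{isometric} embedding and not merely distance-correct on the newly added pair: one must check that the distance from $q$ to each $\psi(a)$ matches $e(\omega, \phi(a))$, which the one-point extension property guarantees by construction, and that $q$ is distinct from the existing image points whenever $\omega$ is distinct in $B$. This last point is automatic since the realized distances are positive for distinct points. I would also note the implicit hypothesis that $\yochara{C}$ be closed under finite metric subspaces so that each $A'$ constructed along the way lies in $\yochara{C}$; for the class $\youfin(R)$ of all finite $R$-valued ultrametric spaces this closure is clear, since any subspace of a finite $R$-ultrametric space is again a finite $R$-ultrametric space.
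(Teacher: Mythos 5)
Your proof is correct and is exactly the argument the paper intends: the paper offers no written proof beyond the remark ``Using induction, we obtain,'' and your induction on $\card(B\setminus\phi(A))$, reducing to the one-point extension property at each step, is the standard way to carry that out. You were also right to flag the implicit hypothesis that $\yochara{C}$ be closed under taking subspaces (with the distinguished point), which holds for the classes $\youfin(R)$ actually used in the paper.
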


The next lemma gives a more practical  
condition for the injectivity. 
\begin{lem}\label{lem:equivuniv}
Let $\yochara{C}$ be a class of 
finite
ultrametric spaces. 
An ultrametric space $(X, d)$ is 
$\yochara{C}$-injective if and only if 
it satisfies the following property:
\begin{enumerate}[label=\textup{(Ex)}]
\item\label{item:property}
Let $(A\sqcup\{\omega\}, e)$ be an arbitrary 
finite metric space in $\yochara{C}$ and
$\phi\colon A\to X$ be an isometric map.  
Let $p\in A$ be a point satisfying that 
$e(p, \omega)=\min_{a\in A}e(a, \omega)$. 
Then 
there exists a point $q\in X$ such that 
if $a\in A$ 
satisfies  $e(a, \omega)=e(p, \omega)$, 
then $e(a, \omega)=d(\phi(a), q)$. 
\end{enumerate}
\end{lem}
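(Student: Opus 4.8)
The plan is to prove both directions, using Lemma~\ref{lem:optext} to reduce $\yochara{C}$-injectivity to the one-point extension property for $\yochara{C}$. The forward direction is immediate: if $(X,d)$ is $\yochara{C}$-injective, then given $(A\sqcup\{\omega\},e)$ and an isometric map $\phi\colon A\to X$, the one-point extension property hands us a point $q\in X$ with $d(\phi(a),q)=e(a,\omega)$ for \emph{all} $a\in A$, which in particular gives the equality for those $a$ with $e(a,\omega)=e(p,\omega)$. So property \ref{item:property} holds trivially, since it only asks for the equality on the subset where $e(a,\omega)$ attains its minimum value $e(p,\omega)$.

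The substance is the converse: assuming \ref{item:property}, I would verify the full one-point extension property and then invoke Lemma~\ref{lem:optext}. So fix $(A\sqcup\{\omega\},e)\in\yochara{C}$ and an isometric embedding $\phi\colon A\to X$, and let $p\in A$ achieve $e(p,\omega)=\min_{a\in A}e(a,\omega)$. Property \ref{item:property} produces $q\in X$ satisfying $d(\phi(a),q)=e(a,\omega)$ whenever $e(a,\omega)=e(p,\omega)$. The claim is that this same $q$ works for \emph{every} $a\in A$, not just the minimizers. The key tool here is the strong triangle inequality, specifically the isosceles characterization from Lemma~\ref{lem:isosceles}: in an ultrametric space, if two of the three pairwise distances of a triangle are unequal, the largest is attained twice.

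The crux of the argument is the following. Take an arbitrary $a\in A$ with $e(a,\omega)>e(p,\omega)$. I would compute $d(\phi(a),q)$ by considering the ``ultrametric triangle'' on $\phi(a),\phi(p),q$. Since $\phi$ is isometric, $d(\phi(a),\phi(p))=e(a,p)$; by the strong triangle inequality applied to $e$ in the space $A\sqcup\{\omega\}$ together with $e(p,\omega)<e(a,\omega)$, Lemma~\ref{lem:isosceles} forces $e(a,p)=e(a,\omega)$. Meanwhile $d(\phi(p),q)=e(p,\omega)<e(a,\omega)=d(\phi(a),\phi(p))$, so applying Lemma~\ref{lem:isosceles} once more to the triangle $\phi(a),\phi(p),q$ in $X$ yields $d(\phi(a),q)=d(\phi(a),\phi(p))=e(a,\omega)$, exactly as required. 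Thus $q$ realizes the one-point extension for all of $A$, and the converse follows from Lemma~\ref{lem:optext}.

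The main obstacle I anticipate is bookkeeping the two applications of the isosceles lemma cleanly: one needs $e(a,p)=e(a,\omega)$ on the abstract side (which follows since $\omega$ is strictly farther from $a$ than $p$ is), and then a second application on the image side in $X$, which requires having first pinned down $d(\phi(p),q)=e(p,\omega)$ from property \ref{item:property} since $p$ is itself a minimizer. There are no genuinely hard estimates, but the argument depends crucially on the fact that $p$ is chosen to minimize the distance to $\omega$, so that $\phi(p)$ is the ``closest anchor'' and every other point's distance to $q$ is determined by the strong triangle inequality from $\phi(p)$ outward.
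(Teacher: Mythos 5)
Your proposal is correct and follows essentially the same route as the paper: both reduce to the one-point extension property via Lemma~\ref{lem:optext}, obtain $q$ from property \ref{item:property} applied to the minimizers, and then handle the non-minimizing points by two applications of the isosceles characterization in Lemma~\ref{lem:isosceles} (first to $e$ on $A\sqcup\{\omega\}$ to get $e(a,p)=e(a,\omega)$, then to $d$ on the triangle $\phi(a),\phi(p),q$). No gaps.
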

\begin{proof}
It suffices to show that  if 
$(X, d)$ satisfies the 
property \ref{item:property}, 
then it is 
$\yochara{C}$-injective.
Let $(A\sqcup\{\omega\}, e)$ be 
an ultrametric space in $\yochara{C}$, and 
$\phi\colon A\to X$ be an isometric embedding. 
Using the property \ref{item:property}, we obtain 
$q\in X$ such that 
if $a\in A$ 
satisfies  $e(a, \omega)=e(p, \omega)$, 
then $e(a, \omega)=d(\phi(a), q)$. 
Notice that $e(p, \omega)=d(\phi(p), q)$. 
To show that $(X, d)$ satisfies the 
one-point extension property for 
$\yochara{C}$, take $a\in A$. 
If $e(p, \omega)=e(a, \omega)$, 
then the property of $q\in X$ implies 
$e(a, \omega)=d(\phi(a), q)$. 
If  $e(p, \omega)<e(a, \omega)$, 
then 
Lemma \ref{lem:isosceles}
implies  $e(p, a)=e(a, \omega)$. 
Due to  $d(\phi(p), q)=e(p, \omega)$
and $d(\phi(p), \phi(a))=e(p, a)$, 
we have $d(\phi(p), q)<d(\phi(p), \phi(a))$. 
According to  Lemma  
\ref{lem:isosceles} again, 
we obtain  $d(\phi(a), q)=d(\phi(a), \phi(p))$. 
From  $e(p, a)=e(a, \omega)$ and 
$e(a, \omega)=d(\phi(a), \phi(p))$, it follows that 
$d(\phi(a), q)=e(a, \omega)$. 
In any case, we obtain $d(\phi(a), q)=e(a, \omega)$. 
Therefore  $(X, d)$ satisfies the 
one-point extension property for $\yochara{C}$, 
and hence it is $\yochara{C}$-injective.
\end{proof}

The injectivity is hereditary to 
dense subsets. 
The next lemma can be proven in  a similar way to 
\cite[Proposition 2.4]{MR2754373}. 
\begin{lem}\label{lem:denseuniv}
Let $R$ be a range set,
and
 $(X, d)$ be an $\youfin(R)$-injective ultrametric space.  
If  $H$ is  a dense subset of $X$, 
then 
 $(H, d)$ is $\youfin(R)$-injective. 
\end{lem}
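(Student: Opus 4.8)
The plan is to verify the practical injectivity criterion from Lemma~\ref{lem:equivuniv} for the dense subspace $(H, d)$, rather than working directly with the definition of $\youfin(R)$-injectivity. Since $H$ inherits the ultrametric from $X$, and since $\youfin(R)$ is a class of finite \emph{ultrametric} spaces, Lemma~\ref{lem:equivuniv} tells us it suffices to check property~\ref{item:property}: given a finite ultrametric space $(A\sqcup\{\omega\}, e)$ in $\youfin(R)$, an isometric embedding $\phi\colon A\to H$, and a point $p\in A$ minimizing $e(\,\cdot\,,\omega)$, we must produce $q\in H$ such that $e(a,\omega)=d(\phi(a),q)$ for every $a\in A$ with $e(a,\omega)=e(p,\omega)$.

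First I would use the injectivity of the ambient space $X$. Composing $\phi$ with the inclusion $H\hookrightarrow X$ gives an isometric embedding into $X$, so by Lemma~\ref{lem:equivuniv} applied to $(X,d)$ there is a point $q_{0}\in X$ satisfying the required equalities $e(a,\omega)=d(\phi(a),q_{0})$ for all $a$ with $e(a,\omega)=e(p,\omega)$; in particular $d(\phi(p),q_{0})=e(p,\omega)$. The issue is that $q_{0}$ may lie outside $H$, so I would replace it by a nearby point of $H$ and exploit the ultrametric rigidity (isosceles triangles) to show the distances are unchanged. Set $m=e(p,\omega)=\min_{a\in A}e(a,\omega)$, which lies in $R\setminus\{0\}$ when $A\neq\emptyset$ and $\omega$ is distinct from the image. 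Using that $H$ is dense, choose $q\in H$ with $d(q,q_{0})<m$.

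The key step is then to show this perturbation does not disturb the relevant distances. For any $a\in A$ with $e(a,\omega)=m$ we have $d(\phi(a),q_{0})=m$, and since $d(q,q_{0})<m\le d(\phi(a),q_{0})$, the strong triangle inequality via Lemma~\ref{lem:isosceles} forces $d(\phi(a),q)=d(\phi(a),q_{0})=m=e(a,\omega)$. Indeed, from $d(q_{0},q)<d(\phi(a),q_{0})$ Lemma~\ref{lem:isosceles} gives $d(\phi(a),q_{0})=d(\phi(a),q)$, which is exactly what we need. Thus $q\in H$ witnesses property~\ref{item:property} for $(H,d)$, and Lemma~\ref{lem:equivuniv} yields that $(H,d)$ is $\youfin(R)$-injective.

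The main obstacle is purely the choice of radius $m$ in the density approximation: it must be small enough that $d(q,q_{0})<d(\phi(a),q_{0})$ holds for \emph{every} relevant $a$ simultaneously, which is automatic here because all those distances equal the common minimal value $m$. One should also record the degenerate cases (e.g.\ $A=\emptyset$, or when the required set of indices $a$ is empty) where any $q\in H$ trivially works, so that the argument is uniform. Apart from this bookkeeping, the proof reduces entirely to transporting the witness from $X$ into the dense set $H$ using the ultrametric isosceles principle, exactly as in \cite[Proposition~2.4]{MR2754373}.
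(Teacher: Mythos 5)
Your proof is correct and follows essentially the same route the paper intends: the paper gives no details but defers to \cite[Proposition 2.4]{MR2754373}, whose argument is exactly your combination of taking a witness $q_{0}$ in the ambient injective space, approximating it by $q\in H$ within the minimal radius $m=e(p,\omega)>0$, and invoking the isosceles principle (Lemma \ref{lem:isosceles}) to see the relevant distances are unchanged. The reduction to property \ref{item:property} of Lemma \ref{lem:equivuniv} and your remarks on the degenerate cases are sound.
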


\section{Function spaces}\label{sec:funcsp}
In this section, we explain our first construction of 
injective ultrametric space. 
\begin{df}\label{df:attach}
Let $X$ be a topological space, 
$R$ be a range set, 
and $E$ a subset of $\yomaps{X}{R}$. 
For 
a non-empty subset $A$ of $E$, 
for  $\zeta\in A$, 
and for 
$r\in R\setminus \{0\}$, 
a subset $E$ is said to be 
\emph{$(A, \zeta, r)$-attachable}
if 
there exists 
$g\in E$ such that 
\begin{enumerate}[label=\textup{(A\arabic*)}]
\item\label{item:www}
for all $h\in A$ with $\yomaindis(h, \zeta)\le r$, we have $g^{-1}(r)\neq h^{-1}(r)$ ;
\item\label{item:www2}
we have $g(x)=\zeta(x)$
whenever 
$x\in X$ satisfies 
$r<(\zeta\lor g)(x)$. 
\end{enumerate}
We say that $E$ is \emph{full-attachable} if
it is $(A, \zeta, r)$-attachable for every 
finite subset
$A$ of $E$, for every $\zeta\in E$, and for every 
$r\in R\setminus \{0\}$. 
\end{df}

\begin{rmk}
According to Corollary 
\ref{cor:rrr}, 
the pair of 
the conditions \ref{item:www} and 
\ref{item:www2}
is   equivalent to the property that 
 if $h\in A$ satisfies $\yomaindis(h, \zeta)\le r$, 
 then $\yomaindis(\zeta, g)=r$. 
\end{rmk}

\begin{thm}\label{thm:favoid}
Let $R$ be a
range set, 
 $X$ be a $0$-dimensional Hausdorff  space.
A subset $E$ of $(\yomaps{X}{S}, \yomaindis)$
is $\youfin(R)$-injective if and 
only if 
$E$ 
is full-attachable. 
\end{thm}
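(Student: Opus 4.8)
The plan is to use the characterization of injectivity via the one-point extension property, and more precisely the refined version in Lemma \ref{lem:equivuniv}, which reduces the problem to handling the minimal distances only. By Lemma \ref{lem:equivuniv}, the set $E$ is $\youfin(R)$-injective precisely when the following holds: given a finite $R$-ultrametric space $(A\sqcup\{\omega\}, e)$, an isometric embedding $\phi\colon A\to E$, and a point $p\in A$ with $e(p,\omega)=\min_{a\in A}e(a,\omega)=:r$, there exists $g\in E$ such that $e(a,\omega)=\yomaindis(\phi(a),g)$ for every $a\in A$ with $e(a,\omega)=r$. The key observation is that, setting $\zeta:=\phi(p)$ and letting $A':=\{\phi(a)\mid e(a,\omega)=r\}$, the condition $\yomaindis(\phi(a),g)=r$ for all such $a$ is, via the remark following Definition \ref{df:attach} and Corollary \ref{cor:rrr}, exactly the assertion that $E$ is $(A',\zeta,r)$-attachable. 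Thus full-attachability is the natural translation of the extension property into the language of functions.

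For the direction that full-attachability implies injectivity, I would argue as follows. Given the data above, I first note that $r\in R\setminus\{0\}$ since $e$ is $R$-valued and $\omega\neq p$ forces $e(p,\omega)>0$. Let $A'=\phi(\{a\in A: e(a,\omega)=r\})$, a finite subset of $E$ containing $\zeta=\phi(p)$. I must check the hypothesis needed to invoke attachability, namely that every $h\in A'$ satisfies $\yomaindis(h,\zeta)\le r$: indeed if $h=\phi(a)$ with $e(a,\omega)=r=e(p,\omega)$, then since $\phi$ is isometric, $\yomaindis(h,\zeta)=e(a,p)\le e(a,\omega)\lor e(\omega,p)=r$ by the strong triangle inequality. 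Applying $(A',\zeta,r)$-attachability yields $g\in E$ satisfying \ref{item:www} and \ref{item:www2}; by the remark after Definition \ref{df:attach}, these give $\yomaindis(\zeta,h)=r$ whenever $\yomaindis(h,\zeta)\le r$, so in particular $\yomaindis(g,\phi(a))=r=e(a,\omega)$ for every $a$ with $e(a,\omega)=r$. This is exactly the conclusion of property \ref{item:property}, so $E$ is $\youfin(R)$-injective by Lemma \ref{lem:equivuniv}.

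For the converse, I assume $E$ is $\youfin(R)$-injective and fix a finite $A\yosub E$, a point $\zeta\in A$, and $r\in R\setminus\{0\}$; I must produce $g\in E$ witnessing $(A,\zeta,r)$-attachability. The idea is to construct an abstract finite $R$-ultrametric space $(A\sqcup\{\omega\},e)$ whose restriction to $A$ is the induced metric $\yomaindis|_{A^2}$ and with $e(\zeta,\omega)=r$, chosen so that $r$ is the minimal distance from $\omega$; concretely I set $e(h,\omega)=r\lor \yomaindis(h,\zeta)$ for each $h\in A$, which by the strong triangle inequality and Lemma \ref{lem:isosceles} yields a genuine $R$-ultrametric (one checks the values lie in $R$ using Proposition \ref{prop:maindisprop}, since $\yomaindis(h,\zeta)\in R$ and $r\in R$, and the max of two elements of a range set used as a distance is consistent with the isosceles property). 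Feeding the inclusion $\phi=\mathrm{id}\colon A\to E$ and the point $p=\zeta$ into the one-point extension property gives $g\in E$ with $\yomaindis(h,g)=e(h,\omega)$ for all $h\in A$; in particular $\yomaindis(\zeta,g)=r$, and for every $h\in A$ with $\yomaindis(h,\zeta)\le r$ we get $e(h,\omega)=r$, hence $\yomaindis(h,g)=r$. Translating back through Corollary \ref{cor:rrr} converts $\yomaindis(\zeta,g)=r$ and $\yomaindis(h,g)=r$ into the combinatorial conditions \ref{item:www} and \ref{item:www2}, establishing attachability.

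The main obstacle I anticipate is the careful verification that the auxiliary metric $e$ defined by $e(h,\omega)=r\lor\yomaindis(h,\zeta)$ is actually an ultrametric extending $\yomaindis|_{A^2}$, i.e.\ that the strong triangle inequality holds for all triples involving $\omega$; this rests on Lemma \ref{lem:isosceles} and a short case analysis comparing $\yomaindis(h,\zeta)$, $\yomaindis(h',\zeta)$, and $r$, together with the fact (from the ultrametric structure on $A$) that $\yomaindis(h,h')\le \yomaindis(h,\zeta)\lor\yomaindis(h',\zeta)$. A secondary but routine subtlety is ensuring that $r$ is genuinely the \emph{minimum} of $\{e(a,\omega):a\in A\}$ so that Lemma \ref{lem:equivuniv} applies with $p=\zeta$; this is immediate from $e(h,\omega)=r\lor\yomaindis(h,\zeta)\ge r$ with equality attained at $h=\zeta$. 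Once these structural checks are in place, both directions reduce to mechanically applying Corollary \ref{cor:rrr} and the remark after Definition \ref{df:attach} to pass between metric equalities and the index-set conditions \ref{item:www}, \ref{item:www2}.
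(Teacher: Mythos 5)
Your proposal is correct and follows essentially the same route as the paper's proof: for the converse direction you build the same one-point extension $e(h,\omega)=r\lor\yomaindis(h,\zeta)$ and invoke injectivity, and for the forward direction you apply attachability at the minimal distance $r=e(p,\omega)$ and translate back through Lemma~\ref{lem:equivuniv}, Proposition~\ref{prop:maindisprop}, and Corollary~\ref{cor:rrr}. (The only slip is the typo ``$\yomaindis(\zeta,h)=r$'' where you mean $\yomaindis(g,h)=r$, which your subsequent sentence already corrects.)
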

\begin{proof}
First assume that $E$ is $\youfin(R)$-injective. 
Take an arbitrary non-empty subset $A$ of $E$, 
$\zeta\in A$, and $r\in R\setminus \{0\}$. 
Put $B=A\sqcup\{\omega\}$ and
define a metric $e$ on $B$
by $e|_{A^{2}}=d$ and 
$e(x, \omega)=e(x, \zeta)\lor r$ for all
$x\in A$. 
We also define an isometric embedding 
$\phi\colon A\to E$ by $\phi(a)=a$. 
Since $E$ is $\youfin(R)$-injective, 
 we can take $g\in E$ such that 
$e(a, \omega)=\yomaindis(\phi(a), g)$ for all $a\in A$.
By Proposition 
\ref{prop:maindisprop} and  
Corollary \ref{cor:rrr}, 
we conclude  that $E$ is full-attachable. 

Next assume that $E$ is 
full-attachable. 
Let $(F\sqcup\{\omega\}, e)$ be an arbitrary 
finite ultrametric space
in $\youfin(R)$,  
and 
$\phi\colon F\to E$ be an isometric embedding. 
Put 
$r=\min_{a\in F}e(a, \omega)$, and take 
$p\in F$ with $r=e(p, \omega)$. 
Since $E$ is $(\phi(F), \phi(p), r)$-attachable, 
we can find 
$g\in E$ satisfying the properties 
\ref{item:www} and \ref{item:www2}. 
To show that $E$ satisfies the property \ref{item:property} in Lemma \ref{lem:equivuniv}, 
take $a\in F$ with $e(a, \omega)=e(p, \omega)(=r)$. 
Then $e(a, p)\le r$,  and hence 
$\yomaindis(\phi(a), \phi(p))\le r$. 
According to \ref{item:www}, 
we have $\phi(a)^{-1}(r)\neq g^{-1}(r)$. 
On account of Proposition \ref{prop:maindisprop}, 
the  inequality $\yomaindis(\phi(a), \phi(p))\le r$
shows that  $\phi(a)(x)=\phi(p)(x)$ whenever 
$r< (\phi(a)\lor \phi(p))(x)$. 
Due to the property \ref{item:www2}, 
we also have
$g(x)=\phi(p)(x)$ whenever 
$r<(g\lor \phi(p))(x)$. 
Therefore, for all $x$ such that 
$r<(g\lor \phi(a))(x)$, 
we obtain $g(x)=\phi(a)(x)$. 
By 
Corollary \ref{cor:rrr}, 
we notice that $\yomaindis(\phi(a), g)=r$.
Namely, 
the space $E$ satisfies the the property \ref{item:property} in Lemma \ref{lem:equivuniv}, 
and hence 
 we conclude that  $E$ is
$\youfin(R)$-injective. 
This finishes the proof of the theorem. 
\end{proof}

We shall  provide examples of 
$\youfin(S)$-injective ultrametric spaces for a 
range set $R$.

\begin{thm}\label{thm:stand}
Let $R$ be a range set, 
and $X$ be a $0$-dimensional 
compact Hausdorff space without  isolated points. 
Then the $R$-valued 
ultrametric space 
$(\yomaps{X}{R}, \yomaindis)$ is 
$\youfin(R)$-injective and complete. 
\end{thm}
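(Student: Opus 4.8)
The completeness is immediate from Corollary \ref{cor:comp00}, so the plan is to concentrate on the $\youfin(R)$-injectivity. By Theorem \ref{thm:favoid} it suffices to prove that $E=\yomaps{X}{R}$ is full-attachable. Thus I would fix a finite subset $A$ of $\yomaps{X}{R}$, a function $\zeta\in \yomaps{X}{R}$, and a value $r\in R\setminus\{0\}$, and produce $g\in \yomaps{X}{R}$ satisfying \ref{item:www} and \ref{item:www2}.

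First I would analyze \ref{item:www2}. Writing $W=\zeta^{-1}((r,\infty))$ and $Y=\zeta^{-1}([0,r])=X\setminus W$, I note that $W$ is clopen: it is a finite union of the clopen sets $\zeta^{-1}(\{s\})$ with $s>r$, there being only finitely many such $s$ in the tenuous image of $\zeta$ by Corollary \ref{cor:sp} and Lemma \ref{lem:discom}. A short computation shows that \ref{item:www2} is equivalent to requiring $g=\zeta$ on $W$ together with $g\le r$ on $Y$. Since $0\in\yoimage{\zeta}$ while $\zeta>r$ on $W$, the set $\zeta^{-1}(0)$ lies in $Y$; in particular $Y$ is a nonempty clopen subset of $X$, and being open in a space with no isolated points, $Y$ itself has no isolated points, hence is infinite and $0$-dimensional compact Hausdorff.

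Next I would reduce \ref{item:www} to a combinatorial choice inside $Y$. For $h\in A$ with $\yomaindis(h,\zeta)\le r$, Proposition \ref{prop:maindisprop} forces $h=\zeta$ wherever $\zeta>r$, so $h^{-1}(r)\subseteq Y$; moreover each $h^{-1}(r)$ is clopen, since $\{r\}$ is clopen in the nearly discrete space by Lemma \ref{lem:nearlydis}. If I define $g$ to equal $\zeta$ on $W$, to equal $r$ on a chosen clopen set $U\subseteq Y$, and to equal $0$ on $Y\setminus U$, then $g$ is continuous, $R$-valued, satisfies \ref{item:www2} by construction, and has $g^{-1}(r)=U$. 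Condition \ref{item:www} then amounts to choosing the clopen set $U$ so that $U\neq h^{-1}(r)$ for the finitely many relevant $h$, while also keeping $U\neq Y$ so that $0\in\yoimage{g}$.

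The crux of the argument, and the one place where the hypothesis of no isolated points is essential, is exactly this final selection of $U$. Since $Y$ is infinite, its weight is infinite, so by Lemma \ref{lem:22-22} the collection of clopen subsets of $Y$ is infinite; as I need only avoid the finitely many forbidden sets $h^{-1}(r)$ together with $Y$ itself, such a clopen $U$ exists. This produces the desired $g$, establishes full-attachability, and completes the proof via Theorem \ref{thm:favoid}. The main obstacle is verifying cleanly that \ref{item:www} and \ref{item:www2} reduce to the statements \emph{$g=\zeta$ on $W$ and $g\le r$ on $Y$} and \emph{$U\neq h^{-1}(r)$}; once these reductions are in place, the no-isolated-points hypothesis supplies the needed clopen set with room to spare.
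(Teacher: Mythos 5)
Your proposal is correct and follows essentially the same route as the paper: set $B=\zeta^{-1}([0,r])$, keep $g=\zeta$ on the complement, and send a clopen piece of $B$ to $r$ and the rest to $0$. In fact you are slightly more careful than the paper's own (very terse) verification, since you explicitly choose the clopen set $U$ from the infinitely many clopen subsets of $B$ so as to avoid the finitely many sets $h^{-1}(r)$ and $B$ itself, which is exactly what is needed to secure condition \ref{item:www}.
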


\begin{proof}
According  to Proposition \ref{prop:funccomplete} and 
Theorem \ref{thm:favoid}, 
we only need to show that 
$(\yomaps{X}{R}, \yomaindis)$ is 
full-attachable. 
Take a non-empty finite subset $A$ of $\yomaps{X}{R}$, 
 $\zeta\in A$, and $r\in R\setminus \{0\}$. 
Put $B=\zeta^{-1}([0, r])$. 
Then $B$ is clopen in $X$. 
Since $X$ is infinite and it has no isolated points, 
we can find  non-empty clopen subsets $K$ and $L$ of $\yomaps{X}{R}$ such that 
$K\cap L=\emptyset$ and  $K\cup L=B$. 
Notice that $K\neq B$. 
We define a map $g\colon X\to R$ by 
\[
g(x)=
\begin{cases}
\zeta(x) & \text{if $x\not\in B$;}\\
r & \text{if $x\in K$;}\\
0 &\text{if $x\in L$.}
\end{cases}
\]
Then $g$ satisfies the properties 
\ref{item:www} and \ref{item:www2}. 
Therefore the space 
$(\yomaps{X}{R}, \yomaindis)$ is 
full-attachable, and hence it is 
$\youfin(R)$-injective. 
\end{proof}

\begin{thm}\label{thm:notinj}
Let $X$ be a $0$-dimensional compact Hausdorff space, 
and $R$ be a
range set. 
If
$3\le \card(R)$, and 
 $X$ has an isolated point, 
 then 
$(\yomaps{X}{R}, \yomaindis)$ is not 
$\youfin(R)$-injective. 
\end{thm}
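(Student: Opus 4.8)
The plan is to invoke Theorem~\ref{thm:favoid}, which reduces the claim to showing that $(\yomaps{X}{R}, \yomaindis)$ fails to be full-attachable; for this it suffices to produce a single triple $(A,\zeta,r)$ for which the space is \emph{not} $(A,\zeta,r)$-attachable. Let $x_{0}$ be an isolated point of $X$, so that $\{x_{0}\}$ and $X\setminus\{x_{0}\}$ are both clopen. Since $3\le\card(R)$, I may choose $r,s\in R$ with $0<r<s$. I then define $\zeta\colon X\to R$ by $\zeta(x_{0})=0$ and $\zeta(x)=s$ for $x\ne x_{0}$; being locally constant on the clopen partition $\{x_{0}\}\sqcup(X\setminus\{x_{0}\})$, the map $\zeta$ is continuous into the nearly discrete space, and $0\in\yoimage{\zeta}$, so $\zeta\in\yomaps{X}{R}$. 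Finally I set $A=\{\zeta\}$, which is admissible since $\zeta\in A$.

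The core of the argument is to show that no $g\in\yomaps{X}{R}$ can simultaneously satisfy \ref{item:www} and \ref{item:www2} for this triple. First I would extract the shape of any $g$ obeying \ref{item:www2}. For $x\ne x_{0}$ we have $\zeta(x)=s>r$, whence $r<(\zeta\lor g)(x)$, and therefore $g(x)=\zeta(x)=s$. At $x_{0}$, if $g(x_{0})>r$ then $r<(\zeta\lor g)(x_{0})=g(x_{0})$ would force $g(x_{0})=\zeta(x_{0})=0$, a contradiction; hence $g(x_{0})\le r$. Thus $g\equiv s$ off $x_{0}$ and $g(x_{0})\in R\cap[0,r]$. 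Now the membership $g\in\yomaps{X}{R}$ requires $0\in\yoimage{g}$; since $g\equiv s\ne 0$ on $X\setminus\{x_{0}\}$, this forces $g(x_{0})=0$, i.e. $g=\zeta$. Consequently $g^{-1}(r)=\zeta^{-1}(r)=\emptyset$, because $r\notin\{0,s\}=\yoimage{\zeta}$. But $\yomaindis(\zeta,\zeta)=0\le r$, so $h=\zeta\in A$ is one of the maps against which \ref{item:www} must be tested, and \ref{item:www} demands $g^{-1}(r)\ne\zeta^{-1}(r)$, which is violated. Hence no admissible $g$ exists, the space is not $(A,\zeta,r)$-attachable, and by Theorem~\ref{thm:favoid} it is not $\youfin(R)$-injective.

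The main obstacle is the deduction that $g$ is forced to coincide with $\zeta$, and this is precisely where the isolated point is indispensable. Because $B=\zeta^{-1}([0,r]\cap R)=\{x_{0}\}$ is a single clopen point, condition \ref{item:www2} pins down $g$ everywhere except at $x_{0}$, and the constraint $0\in\yoimage{g}$ then removes the last degree of freedom, leaving $g=\zeta$ as the only candidate. This should be contrasted with the proof of Theorem~\ref{thm:stand}, where the absence of isolated points lets one split the relevant clopen set into two non-empty pieces and thereby realize a value of $g^{-1}(r)$ distinct from every prescribed $h^{-1}(r)$. The hypothesis $3\le\card(R)$ enters only to guarantee a value $s>r$ available for $\zeta$ off $x_{0}$, which is what collapses $B$ to the singleton $\{x_{0}\}$; the remainder of the verification is a direct unwinding of Definition~\ref{df:attach}.
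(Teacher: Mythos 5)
Your proposal is correct and follows essentially the same route as the paper: the same witness $\zeta$ (value $0$ at the isolated point, a larger value $s=l$ elsewhere), the same triple $(\{\zeta\},\zeta,r)$, and the same appeal to Theorem~\ref{thm:favoid}. The only cosmetic difference is in phrasing the final contradiction: the paper notes that \ref{item:www} and $0\in\yoimage{g}$ would force $g^{-1}(r)=\{p\}=g^{-1}(0)$ simultaneously, while you first pin down $g=\zeta$ and then observe that \ref{item:www} fails; both are the same obstruction.
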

\begin{proof}
Take $r, l\in R$ with $0<r<l$, and
let $p\in X$ be an isolated point of $X$, i.e., 
the singleton $\{p\}$ is open. 
Define $\zeta\colon X\to R$ by 
\[
\zeta(x)=
\begin{cases}
l & \text{if $x\in X\setminus \{p\}$;}\\
0 & \text{if $x=p$.}
\end{cases}
\]
Then $\zeta\in \yomaps{X}{R}$. 
In this setting, 
the space $(\yomaps{X}{R}, \yomaindis)$ is 
not $(\{\zeta\}, \zeta, r)$-attachable. 
Indeed, if there exists $g\in \yomaps{X}{R}$
satisfying the properties \ref{item:www} and 
\ref{item:www2}, 
then $g^{-1}(a)\neq \emptyset$, 
$g^{-1}(0)\neq \emptyset$ and 
$g(x)=\zeta(x)$ for all $x\in X\setminus \{p\}$. 
Then $g$ should satisfy $g^{-1}(r)=\{p\}$ and 
$g^{-1}(0)=\{p\}$. This is impossible. 
\end{proof}

Combining 
Theorems \ref{thm:stand} and 
\ref{thm:notinj}, 
we obtain 
a characterization of 
spaces without isolated points. 

\begin{cor}\label{cor:charaiso}
Let $X$ be a $0$-dimensional compact Hausdorff space. 
Then 
$X$ has no isolated points if and only if 
$(\yomaps{X}{R}, \yomaindis)$  is 
$\youfin(R)$-injective for every range set   $R$ 
with  $3\le \card(R)$. 
\end{cor}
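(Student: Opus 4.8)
The plan is to derive Corollary~\ref{cor:charaiso} directly from the two preceding theorems, since it is precisely the combination of their two implications once one observes that under the hypothesis $3\le \card(R)$ both theorems apply simultaneously. First I would prove the forward direction: assume $X$ is a $0$-dimensional compact Hausdorff space with no isolated points, and let $R$ be any range set with $3\le \card(R)$. In particular $\card(R)\ge 3\ge 1$, so $R$ is a legitimate range set, and the hypothesis of Theorem~\ref{thm:stand} is exactly that $X$ is a $0$-dimensional compact Hausdorff space without isolated points. Thus Theorem~\ref{thm:stand} immediately yields that $(\yomaps{X}{R}, \yomaindis)$ is $\youfin(R)$-injective (and complete). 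Since $R$ was an arbitrary range set with $3\le \card(R)$, this establishes the ``only if'' half of the equivalence.

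For the converse I would argue contrapositively, which is the cleaner route. Suppose $X$ \emph{does} have an isolated point. I want to exhibit a single range set $R$ with $3\le \card(R)$ for which $(\yomaps{X}{R}, \yomaindis)$ fails to be $\youfin(R)$-injective; this suffices to negate the statement ``$(\yomaps{X}{R}, \yomaindis)$ is $\youfin(R)$-injective for every range set $R$ with $3\le \card(R)$.'' But Theorem~\ref{thm:notinj} hands this to me directly: it asserts that whenever $X$ is a $0$-dimensional compact Hausdorff space with an isolated point and $3\le \card(R)$, the space $(\yomaps{X}{R}, \yomaindis)$ is not $\youfin(R)$-injective. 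So I simply fix any range set $R$ with $\card(R)\ge 3$ (for instance $R=\{0\}\cup\{r,l\}$ for some $0<r<l$, or any larger set), apply Theorem~\ref{thm:notinj}, and conclude that injectivity fails for that particular $R$, hence it does not hold for \emph{every} such $R$.

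Assembling the two halves gives the biconditional. The only point requiring a word of care is the logical quantifier structure: the right-hand side of the equivalence is a universally quantified statement over range sets $R$ with $3\le \card(R)$, so the forward direction must establish injectivity uniformly for all such $R$ (which Theorem~\ref{thm:stand} does, since it is stated for an arbitrary range set), whereas to refute it in the converse direction a single witness $R$ suffices (which Theorem~\ref{thm:notinj} supplies). There is no genuine obstacle here; the substance of the corollary lives entirely in Theorems~\ref{thm:stand} and~\ref{thm:notinj}, and the present statement is a formal packaging of them. If anything, the ``hard part'' is purely bookkeeping: making sure the hypothesis $3\le \card(R)$ is strong enough to invoke both theorems, which it is, since Theorem~\ref{thm:stand} needs no cardinality hypothesis on $R$ at all and Theorem~\ref{thm:notinj} needs exactly $3\le \card(R)$.
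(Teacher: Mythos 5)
Your proposal is correct and matches the paper exactly: the corollary is obtained by combining Theorem~\ref{thm:stand} (for the forward direction) with Theorem~\ref{thm:notinj} (for the contrapositive of the converse), which is precisely the one-line argument the paper gives. Your additional remarks on the quantifier structure and on why $3\le\card(R)$ suffices for both invocations are accurate bookkeeping.
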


According to 
Proposition \ref{prop:mapsweight} and 
Theorem \ref{thm:stand}, the next proposition 
holds true. Recall that $\yocantorc$ stands for the 
Cantor set. 
\begin{prop}\label{prop:sepUryI}
If $R$ is a countable or finite range set, 
then the space 
$(\yomaps{\yocantorc}{R}, \yomaindis)$ is 
the (separable) $R$-Urysohn universal 
$R$-valued ultrametric space. 
\end{prop}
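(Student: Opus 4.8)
The plan is to prove Proposition~\ref{prop:sepUryI} by verifying separately the three defining properties of the $R$-Urysohn universal ultrametric space: that $(\yomaps{\yocantorc}{R}, \yomaindis)$ is (i) separable, (ii) $\youfin(R)$-injective, and (iii) complete, and then invoking the known uniqueness of such a space up to isometry. The key observation I would exploit at the outset is that the Cantor set $\yocantorc$ is a $0$-dimensional compact Hausdorff space with no isolated points, so Theorem~\ref{thm:stand} applies directly and hands us both the $\youfin(R)$-injectivity and the completeness of $(\yomaps{\yocantorc}{R}, \yomaindis)$ for \emph{every} range set $R$, with no countability hypothesis needed. Thus the only place the hypothesis ``$R$ is countable or finite'' plays a role is in securing separability.

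For the separability, I would compute the topological weight and show it is at most $\aleph_0$. Since $\yocantorc$ is a $0$-dimensional compact Hausdorff space with infinite (indeed countable) weight $\yoweight{\yocantorc}=\aleph_0$, Proposition~\ref{prop:mapsweight} gives
\[
\yoweight{\yomaps{\yocantorc}{R}}=\max\{\yoweight{\yocantorc}, \card(R)\}=\max\{\aleph_0, \card(R)\},
\]
provided $2\le \card(R)$. Under the hypothesis that $R$ is countable or finite we have $\card(R)\le \aleph_0$, so the right-hand side equals $\aleph_0$. A metric space is separable precisely when its weight is at most $\aleph_0$ (as recorded in Subsection~\ref{subsec:gen}), so $(\yomaps{\yocantorc}{R}, \yomaindis)$ is separable. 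The one degenerate case to dispose of is $\card(R)=1$, i.e.\ $R=\{0\}$, where Proposition~\ref{prop:mapsweight} does not apply; here $\yomaps{\yocantorc}{R}$ is a single point and separability is trivial, so I would handle it by a one-line remark.

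Having assembled separability, $\youfin(R)$-injectivity, and completeness, I would conclude that $(\yomaps{\yocantorc}{R}, \yomaindis)$ satisfies exactly the defining conditions of the $R$-Urysohn universal ultrametric space as fixed in the introduction, and that by the stated uniqueness it \emph{is} that space (up to isometry). I do not expect any genuine obstacle here: the heavy lifting is entirely contained in Theorem~\ref{thm:stand} and Proposition~\ref{prop:mapsweight}, and the proof is essentially a matter of checking that $\yocantorc$ meets the hypotheses of both results and that the countability of $R$ forces the weight down to $\aleph_0$. The only point demanding a moment's care is the bookkeeping on $\card(R)$: ensuring $2\le\card(R)$ before citing Proposition~\ref{prop:mapsweight} and separately treating the trivial singleton range set, so that the claim holds uniformly across all countable or finite $R$.
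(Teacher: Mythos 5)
Your proof is correct and follows the same route as the paper, which derives the proposition directly from Theorem~\ref{thm:stand} (injectivity and completeness) together with Proposition~\ref{prop:mapsweight} (weight $\max\{\aleph_0,\card(R)\}=\aleph_0$, hence separability). Your extra care about the degenerate case $\card(R)=1$ is a harmless refinement the paper leaves implicit.
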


We denote by $\yoopsp$ the 
one-point compactification of the countable discrete space $\zz_{\ge 0}$. 
Notice that $\yoopsp=\zz_{\ge 0}\sqcup \{\infty\}$. 
As noted in 
Section \ref{sec:intro}, 
the construction of 
$\yomaps{X}{R}$ can be considered as a
generalization of 
Vestfrid's universal space $V$ in 
\cite{MR1354831} (originally, it is denoted by 
``$**$''), 
which is the space of all decreasing sequence $\{x_{i}\}_{i\in \zz_{\ge0}}$ in $R$ such that 
$x_{i+1}\le x_{i}$ for all $i\in \zz_{\ge 0}$ and 
$x_{i}\to 0$ as $i\to \infty$ equipped with 
the metric $u$ defined by 
$u(\{x_{i}\}_{i\in\zz_{\ge 0}}, \{y_{i}\}_{i\in \zz_{\ge 0}})
=\max_{k\in \zz_{\ge 0}}\{x_{k}, y_{k}\}$ if there exists $i\in \zz_{\ge 0}$ with 
$x_{i}\neq y_{i}$; otherwise, the distance  is  $0$. 
By  Proposition \ref{prop:supsup}, 
the space $(V, u)$ can be regarded as 
the space of 
all continuous maps $f\colon \yoopsp\to R$ such that 
$f(\infty)=0$ and $f(n+1)\le f(n)$ and 
$f(n)\to 0$ as $n\to \infty$
equipped with $\yomaindis$. 
Using Theorem  \ref{thm:favoid}, 
we can   prove the injectivity of   $V$. 

\begin{prop}\label{prop:decrease}
The ultrametric space $(V, u)$ explained above
 is   $\youfin(R)$-injective and complete. 
\end{prop}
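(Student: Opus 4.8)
The plan is to realize $(V,u)$ as the metric subspace $E$ of $(\yomaps{\yoopsp}{R},\yomaindis)$ consisting of those $f$ with $f(\infty)=0$ and $f(n+1)\le f(n)$ for all $n\in\zz_{\ge 0}$; this identification is exactly the one provided by Proposition \ref{prop:supsup}, and each such $f$ really does lie in $\yomaps{\yoopsp}{R}$ because $f(\infty)=0$ forces $0\in\yoimage{f}$, while continuity at $\infty$ in the nearly discrete topology is precisely the condition $f(n)\to 0$. Since $\yoopsp$ is a $0$-dimensional compact Hausdorff space, Theorem \ref{thm:favoid} is available and will give injectivity once I verify that $E$ is full-attachable. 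The point to flag is that $\yoopsp$ has isolated points, so Theorem \ref{thm:stand} does not apply directly and the attachability must be checked by hand. Completeness will follow by showing that $E$ is closed in the complete space $(\yomaps{\yoopsp}{R},\yomaindis)$ (Corollary \ref{cor:comp00}).

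For completeness I would argue exactly as in the proof of Proposition \ref{prop:spmetcomp}: if $f_i\in E$ and $\yomaindis(f_i,f)\to 0$ with limit $f\in\yomaps{\yoopsp}{R}$, then convergence in $\yomaindis$ entails pointwise convergence in the Euclidean topology. Passing to Euclidean limits in the relations $f_i(\infty)=0$ and $f_i(n+1)\le f_i(n)$ yields $f(\infty)=0$ and $f(n+1)\le f(n)$, so $f\in E$. Hence $E$ is closed, and therefore complete.

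For injectivity, by Theorem \ref{thm:favoid} it suffices to show $E$ is full-attachable. Fix a non-empty finite $A\subseteq E$, a point $\zeta\in A$, and $r\in R\setminus\{0\}$. Because $\zeta$ is non-increasing and tends to $0$, the clopen set $\zeta^{-1}([0,r])$ is a tail $\{m,m+1,\dots\}\cup\{\infty\}$, where $m$ is the least index with $\zeta(m)\le r$ (so $\zeta(m-1)>r$ when $m\ge 1$). Every $h\in A$ belongs to $E$, so each $h^{-1}(r)$ is finite; let $N=\max_{h\in A}\card(h^{-1}(r))$. Choose any integer $k\ge m$ with $k-m+1>N$, and define $g$ by $g=\zeta$ on $\{0,\dots,m-1\}$, $g\equiv r$ on $\{m,\dots,k\}$, and $g\equiv 0$ on $\{k+1,k+2,\dots\}\cup\{\infty\}$. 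A direct check shows $g$ is non-increasing (the junction at $m$ uses $\zeta(m-1)>r$), tends to $0$, and satisfies $g(\infty)=0$, so $g\in E$. Condition \ref{item:www2} holds since $g=\zeta$ wherever $\zeta>r$ and $g\le r$ on the tail, so $(\zeta\lor g)(x)>r$ forces $g(x)=\zeta(x)$ (this last point being transparent from Proposition \ref{prop:maindisprop}); condition \ref{item:www} holds because $g^{-1}(r)=\{m,\dots,k\}$ has cardinality $>N$ and hence differs from every $h^{-1}(r)$. Thus $E$ is full-attachable, and Theorem \ref{thm:favoid} yields $\youfin(R)$-injectivity.

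The genuinely new difficulty, compared with the proof of Theorem \ref{thm:stand}, is exactly the presence of isolated points in $\yoopsp$: one cannot split $\zeta^{-1}([0,r])$ into two non-empty clopen pieces freely, because every admissible $g$ must remain non-increasing, which forces $g^{-1}(r)$ to be an initial segment of the tail. The main obstacle is therefore to produce such an initial segment avoiding the finitely many forbidden sets $h^{-1}(r)$; I expect the cardinality (length) argument above to be the cleanest way around it, since finiteness of $A$ bounds the sizes of all forbidden sets at once.
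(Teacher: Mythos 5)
Your proof is correct and follows exactly the route the paper indicates (the paper states only ``Using Theorem \ref{thm:favoid}, we can prove the injectivity of $V$'' and omits the details): realize $V$ inside $\yomaps{\yoopsp}{R}$, check closedness for completeness, and verify full-attachability by hand since $\yoopsp$ has isolated points. Your cardinality argument --- making $g^{-1}(r)$ an initial segment of the tail longer than every $h^{-1}(r)$ for $h\in A$ --- is a valid way to satisfy conditions \ref{item:www} and \ref{item:www2} while keeping $g$ non-increasing, so the proposal fills the gap correctly.
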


\section{Spaces of ultrametrics}\label{sec:ultrametrics}

We begin 
with 
a variant of Theorem \ref{thm:favoid} for ultrametrics. 
A point in a topological space 
is
an  \emph{accumulation point} if 
it is not isolated. 
\begin{lem}\label{lem:sugoi}
Let $X$ be a $0$-dimensional 
compact Hausdorff space, 
$R$ be a range set, 
and $E$ be a subset of $\yocmet{X}{R}$. 
Then  $E$ is $\youfin(R)$-injective if and only if 
$E$ satisfies that  for 
every  finite subset $A$ of $E$, 
$\zeta\in A$, 
and 
$r\in R\setminus \{0\}$, 
there exists $g\in E$ such that 
\begin{enumerate}[label=\textup{(B\arabic*)}]
\item\label{item:magmag}
if $h\in A$ satisfies 
$\umetdis_{X}^{R}(\zeta, h)\le r$, then 
there exists $a_{h}\in X$ such that 
$\yosph(a_{h}, r; h)\neq \yosph(a_{h}, r; g)$; 

\item\label{item:mm2} 
we have 
$\umetdis_{X}^{R}(\zeta, g)\le r$. 
\end{enumerate}
\end{lem}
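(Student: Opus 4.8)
The plan is to mirror the proof of Theorem~\ref{thm:favoid}, replacing the function-space condition $g^{-1}(r)\neq h^{-1}(r)$ by its geometric counterpart for pseudo-ultrametrics. The bridge is that, since $\yocmet{X}{R}\yosub\yomaps{X\times X}{R}$ and $\umetdis_{X}^{R}=\yomaindis|_{\yocmet{X}{R}^{2}}$ by Proposition~\ref{prop:spectral}, I may apply Corollary~\ref{cor:rrr} to $h,g$ regarded as continuous maps on the compact space $X\times X$. First I would record the elementary fact that $\umetdis_{X}^{R}(h,g)\le r$ holds if and only if $h(x,y)=g(x,y)$ for every pair with $r<(h\lor g)(x,y)$; this is immediate from the definition of $\umetdis_{X}^{R}$ together with Lemma~\ref{lem:disep}, and says exactly that $h$ and $g$ agree on all pairs of distance exceeding $r$. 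Granting this, the second clause in Corollary~\ref{cor:rrr} becomes automatic once $\umetdis_{X}^{R}(h,g)\le r$ is known, and the corollary reads: if $\umetdis_{X}^{R}(h,g)\le r$, then $\umetdis_{X}^{R}(h,g)=r$ if and only if $h^{-1}(r)\neq g^{-1}(r)$ as subsets of $X\times X$. Finally I would translate this into sphere language: since $h^{-1}(r)=\bigcup_{a\in X}\{a\}\times\yosph(a,r;h)$, the inequality $h^{-1}(r)\neq g^{-1}(r)$ holds precisely when $\yosph(a,r;h)\neq\yosph(a,r;g)$ for some $a\in X$. This yields the key equivalence used in both directions.

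For the forward implication, assume $E$ is $\youfin(R)$-injective and fix a finite $A\yosub E$, $\zeta\in A$ and $r\in R\setminus\{0\}$. I would put $B=A\sqcup\{\omega\}$ and define $e$ by $e|_{A^{2}}=\umetdis_{X}^{R}$ and $e(x,\omega)=\umetdis_{X}^{R}(x,\zeta)\lor r$; a short check using the strong triangle inequality for $\umetdis_{X}^{R}$ and the $R$-valuedness from Corollary~\ref{cor:spectrul} shows $(B,e)\in\youfin(R)$. The inclusion $A\hookrightarrow E$ is isometric, so the one-point extension property (Lemma~\ref{lem:optext}) supplies $g\in E$ with $\umetdis_{X}^{R}(a,g)=e(a,\omega)$ for all $a\in A$. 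Taking $a=\zeta$ gives $\umetdis_{X}^{R}(\zeta,g)=r$, which is \ref{item:mm2}; for $h\in A$ with $\umetdis_{X}^{R}(\zeta,h)\le r$ we get $e(h,\omega)=r$, hence $\umetdis_{X}^{R}(h,g)=r$, and since $\umetdis_{X}^{R}(h,g)\le r$ by the strong triangle inequality, the key equivalence produces the required $a_{h}$, giving \ref{item:magmag}.

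For the converse I would verify condition \ref{item:property} of Lemma~\ref{lem:equivuniv}. Given a finite $(F\sqcup\{\omega\},e)\in\youfin(R)$, an isometric embedding $\phi\colon F\to E$, and $p\in F$ with $r:=e(p,\omega)=\min_{a\in F}e(a,\omega)$, note $r\in R\setminus\{0\}$ because $\omega\notin F$. Applying the hypothesis to $A=\phi(F)$, $\zeta=\phi(p)$ and this $r$ gives $g\in E$ satisfying \ref{item:magmag} and \ref{item:mm2}. For $a\in F$ with $e(a,\omega)=r$, the strong triangle inequality gives $\umetdis_{X}^{R}(\phi(a),\phi(p))=e(a,p)\le r$, so \ref{item:magmag} yields a point witnessing $\yosph(a_{h},r;\phi(a))\neq\yosph(a_{h},r;g)$, while \ref{item:mm2} together with the strong triangle inequality gives $\umetdis_{X}^{R}(\phi(a),g)\le r$; the key equivalence then forces $\umetdis_{X}^{R}(\phi(a),g)=r=e(a,\omega)$, so $g$ serves as the required point $q$.

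The main obstacle is the sphere translation in the first paragraph: everything hinges on correctly identifying $\umetdis_{X}^{R}(h,g)=r$ with a single sphere discrepancy, and on checking that passing to $X\times X$ legitimately converts the level-set condition of Corollary~\ref{cor:rrr} into the statement about $\yosph(\cdot,r;\cdot)$. Once that equivalence is in place, both implications are formal and parallel the proof of Theorem~\ref{thm:favoid}.
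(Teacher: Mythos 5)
Your proof is correct and takes essentially the same route as the paper: the paper's own proof simply invokes Theorem \ref{thm:favoid} for $E$ viewed as a subset of $\yomaps{X\times X}{R}$ and translates the attachability conditions \ref{item:www}--\ref{item:www2} into \ref{item:magmag}--\ref{item:mm2} via the identification $h^{-1}(r)=\bigcup_{a\in X}\{a\}\times \yosph(a, r; h)$, which is exactly your ``key equivalence.'' The only difference is that you inline the proof of Theorem \ref{thm:favoid} (through Lemmas \ref{lem:optext} and \ref{lem:equivuniv} and Corollary \ref{cor:rrr}) instead of citing it as a black box.
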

\begin{proof}
First assume that $E$ is $\youfin(R)$-injective. 
Then, owing to  Theorem \ref{thm:favoid}, 
we notice that $E$ is full-attachable. 
The conditions  \ref{item:www} and 
 \ref{item:www2} imply 
 \ref{item:magmag} and 
 \ref{item:mm2}, respectively. 

Next assume that $E$ satisfies 
\ref{item:magmag} and 
 \ref{item:mm2}. 
According to  Theorem \ref{thm:favoid}, 
it suffices to show that 
$E$ is full-attachable in 
$\yomaps{X^{2}}{R}$. 
Take 
a  finite subset $A$ of $E$, 
$\zeta\in A$, 
and 
$r\in R\setminus \{0\}$. 
By \ref{item:magmag}, 
we may assume that  there exists  
$b\in \yosph(a_{h}, r; h)$ and 
$b\not\in \yosph(a_{h}, r, g)$
for all $h\in A$. 
Then 
$(a_{h}, b)\in h^{-1}(r)$ and 
$(a_{h}, b)\not\in g^{-1}(r)$, i.e., 
$h^{-1}(r)\neq g^{-1}(r)$. 
Thus $E$ satisfies the condition \ref{item:www}. 
The condition \ref{item:www2} follows 
from \ref{item:mm2} and the fact that 
$\umetdis_{X}^{R}=\yomaindis|_{\yocmet{X}{R}}$
(see Proposition \ref{prop:spectral}). 
Therefore the set $E$ is full-attachable. 
\end{proof}

\begin{thm}\label{thm:contimetuniv}
Let 
 $X$ be an infinite  $0$-dimensional 
compact Hausdorff space 
possessing  an accumulation point, 
and 
 $R$ be 
a range set. 
Then 
the space 
$(\yocmet{X}{R}, \umetdis_{X}^{R})$ is 
$\youfin(R)$-injective and 
complete. 
\end{thm}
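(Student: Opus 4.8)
The plan is to prove Theorem \ref{thm:contimetuniv} by verifying the completeness and injectivity separately, with the injectivity being the substantial part. Completeness is immediate: Proposition \ref{prop:spmetcomp} already states that $(\yocmet{X}{R}, \umetdis_{X}^{R})$ is complete for any compact space $X$, so nothing further is needed there. For the injectivity, the natural strategy is to invoke Lemma \ref{lem:sugoi}, which reduces $\youfin(R)$-injectivity of a subset $E$ of $\yocmet{X}{R}$ to a concrete one-point-extension condition: given a finite subset $A$, a base pseudo-ultrametric $\zeta \in A$, and a radius $r \in R \setminus \{0\}$, I must construct a single $g \in \yocmet{X}{R}$ satisfying \ref{item:magmag} (for each $h \in A$ with $\umetdis_{X}^{R}(\zeta, h) \le r$, some ball-sphere at radius $r$ differs between $h$ and $g$) and \ref{item:mm2} ($\umetdis_{X}^{R}(\zeta, g) \le r$). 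So the whole theorem comes down to producing one good $g$.

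The construction of $g$ is where the accumulation-point hypothesis enters. The idea is to modify $\zeta$ only at scale $r$, leaving its coarse structure (balls of radius $> r$) untouched, so that \ref{item:mm2} holds automatically. Concretely, I would look at the partition of $X$ into the closed $\zeta$-balls $\{B(p, r; \zeta)\}$ of radius $r$; these are clopen since $X$ is $0$-dimensional compact Hausdorff and $\zeta$ is continuous. To guarantee \ref{item:mm2}, I want $g$ to agree with $\zeta$ on distances exceeding $r$, which means $g$ should induce the same radius-$r$ ball decomposition as $\zeta$ and the same quotient metric $\yocutdis{\zeta}{r}$ on those balls (by Corollary \ref{cor:spectrul}, this is exactly $\umetdis_{X}^{R}(\zeta, g) \le r$). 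Then I am free to redefine $g$ \emph{inside} each radius-$r$ ball however I like, as long as the internal diameters stay $\le r$. The plan is to pick one ball $B$ that is infinite — here is where I use that $X$ has an accumulation point, hence at least one radius-$r$ ball is infinite, or more carefully I may need to split a ball containing an accumulation point — and refine the internal pseudo-ultrametric on that ball so that the sphere structure at radius $r$ is altered relative to every $h \in A$ simultaneously.

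The main obstacle I anticipate is \ref{item:magmag}: I need a \emph{single} $g$ whose radius-$r$ spheres differ from those of \emph{every} relevant $h \in A$ at once, not just from $\zeta$. Since $A$ is finite, the set of ``forbidden'' sphere-configurations coming from the $h$'s is finite, so intuitively there should be enough room to dodge all of them if the chosen ball has enough points. The clean way to realize this is to split the infinite ball $B$ into two nonempty clopen pieces $K, L$ (possible because $B$, being infinite compact $0$-dimensional Hausdorff without enough isolated structure, admits such a split near the accumulation point) and declare the $g$-distance between $K$ and $L$ to equal $r$ while keeping all distances within $K$ and within $L$ strictly below $r$ — mirroring the two-set trick used in the proof of Theorem \ref{thm:stand}. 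This creates a genuine radius-$r$ sphere relation in $g$ that no $h$ close to $\zeta$ can match, because each such $h$ restricted to $B$ has its own ball-of-radius-$r$ decomposition, and by choosing $K, L$ to cut across the $h$-balls I force $\yosph(a_h, r; h) \ne \yosph(a_h, r; g)$ for a suitable center $a_h$. The delicate point is checking that a single choice of the $K/L$ partition works against all finitely many $h$ simultaneously; I expect to handle this by a counting or genericity argument, possibly choosing the splitting point to be an accumulation point so that both $K$ and $L$ meet every $h$-ball of radius $r$ inside $B$, which is exactly what breaks each $h$'s sphere while preserving \ref{item:mm2}. Once $g$ is constructed and shown to lie in $\yocmet{X}{R}$ via Proposition \ref{prop:amalgam} (amalgamating the modified internal metrics with the unchanged coarse metric $\yocutdis{\zeta}{r}$), the conditions \ref{item:magmag} and \ref{item:mm2} follow, and Lemma \ref{lem:sugoi} finishes the proof.
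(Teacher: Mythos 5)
Your proposal follows essentially the same route as the paper: reduce to Lemma \ref{lem:sugoi}, preserve the radius-$r$ ball decomposition and quotient metric of $\zeta$ via an amalgamation system so that \ref{item:mm2} holds automatically, and split the ball containing an accumulation point $p$ into two clopen pieces at mutual $g$-distance $r$ to defeat every $h\in A$ at once. The ``delicate point'' you leave to a genericity argument is handled in the paper exactly by the idea you sketch in your last clause: one takes a clopen neighborhood $N$ of $p$ disjoint from the clopen set $G=\bigcup_{h\in A}\yosph(p,r;h)$ and small enough that $G$ is a proper subset of $B(p,r;\zeta)\setminus N$, so that $\yosph(p,r;g)=B(p,r;\zeta)\setminus N$ properly contains each $\yosph(p,r;h)$ and the single center $a_{h}=p$ works for all $h$ simultaneously.
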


\begin{proof}
Take a finite subset  $A$ of 
$\yocmet{X}{R}$,  
$\zeta\in A$,
and  $r\in R\setminus \{0\}$. 
Let $p$ be an accumulation point of $X$. 
By the compactness of $X$, 
we can take $\{p_{i}\}_{i\in I}$, where 
$I=\{0, \dots, n\}$, 
for which 
the family $\{B(p_{i}, \epsilon)\}_{i\in I}$
is a mutually disjoint covering of $X$. 
Due to Lemma \ref{lem:ultraopcl}, 
we may assume that $p_{0}=p$. 
Put $G=\bigcup_{h\in A} \yosph(p, r; h)$. 
Since 
the set $\yosph(p, r; h)$ 
is clopen in  
$X$ for all  $h\in A$, 
the set 
$G$ is clopen in $X$ and 
$p\not\in  G$. 
Using the  $0$-dimensionality of $X$, 
we can find a 
 clopen neighborhood   $N$ of $p$ such that 
$N\cap G=\emptyset$. 
If necessary, taking  a  sufficiently small neighborhood $N$, 
we may assume that $G$ is a proper subset of 
$X\setminus N$, which is possible since $p$ is an accumulation point. 
We define a pseudo-ultrametric $u_{0}$ on $B(p, r;\zeta)$ by 
\[
u_{0}(x, y)=
\begin{cases}
0 & \text{if $x, y\in N$ or $x, y\in X\setminus N$;}\\
r & \text{otherwise.}
\end{cases}
\]
For each $i\in I\setminus \{0\}$, 
we define a pseudo-ultrametric  $u_{i}$ on 
$B(p_{i}, r)$ by $u_{i}=\zeta$. 
We define a metric $v$ on $I$ by 
$v(i, j)=\zeta(p_{i}, p_{j})$. 
Then 
$\yoquin
=(X, I, v, \{B(p_{i}, r)\}_{i\in I}, \{p_{i}\}_{i\in I})$
 is 
 an $R$-amalgamation system. 
 According to \ref{item:amal:2} in 
 Proposition \ref{prop:amalgam}, 
 the ultrametric $g$ associated with $\yoquin$ satisfies 
 $\umetdis_{X}^{R}(\zeta, g)\le r$. 
 Since $\yosph(p, r; g)=X\setminus N$, 
 we have $\yosph(p, r; g)\neq \yosph(p, r; h)$
 for all $h\in A$. 
 Thus the conditions \ref{item:magmag} and \ref{item:mm2} are satisfied. 
Therefore Lemma \ref{lem:sugoi} that 
$(\yocmet{X}{R}, \umetdis_{X}^{R})$ is 
$\youfin(R)$-injective. 
\end{proof}

Lemma \ref{prop:metricweight} yields the 
next proposition:
\begin{prop}\label{prop:sepUryII}
If $X$ is an infinite  compact ultrametrizable space 
and 
$R$ is a finite or  countable range set, 
then the space 
$(\yocmet{X}{R}, \umetdis_{X}^{R})$ is 
the  (separable) $R$-Urysohn universal 
$R$-valued ultrametric space. 
\end{prop}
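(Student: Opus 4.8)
The plan is to verify that $(\yocmet{X}{R}, \umetdis_{X}^{R})$ is simultaneously $\youfin(R)$-injective, complete, and separable, and then to invoke the uniqueness (up to isometry) of the separable complete $\youfin(R)$-injective $R$-ultrametric space recorded in the introduction. Injectivity and completeness will come essentially for free from Theorem \ref{thm:contimetuniv}, so the only genuine work is to check its hypotheses and then to pin down separability through the weight computation of Proposition \ref{prop:metricweight}.

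First I would promote the standing hypotheses on $X$ to those demanded by Theorem \ref{thm:contimetuniv}. Since $X$ is ultrametrizable, Proposition \ref{prop:equivultmet} applied with the characteristic range set $[0,\infty)$ shows that $X$ is metrizable with $\yolind(X)=0$; as $X$ is compact Hausdorff, Proposition \ref{prop:zerodim} then upgrades this to $0$-dimensionality in the sense used throughout the paper. Moreover, an infinite compact Hausdorff space cannot be discrete (a compact discrete space is finite), so $X$ possesses an accumulation point. Thus every hypothesis of Theorem \ref{thm:contimetuniv} is met, and that theorem yields both the $\youfin(R)$-injectivity and the completeness of $(\yocmet{X}{R}, \umetdis_{X}^{R})$; Corollary \ref{cor:spectrul} additionally confirms that $\umetdis_{X}^{R}$ is genuinely $R$-valued, so the space qualifies as an $R$-valued ultrametric space.

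It remains to establish separability, i.e.\ that $\yoweight{\yocmet{X}{R}}\le\aleph_{0}$. Here I would first observe that $\yoweight{X}$ is infinite: an infinite $T_{1}$ space cannot admit a finite base, while compactness together with metrizability forces $X$ to be separable, so in fact $\yoweight{X}=\aleph_{0}$. With $\yoweight{X}$ infinite, Proposition \ref{prop:metricweight} gives $\yoweight{\yocmet{X}{R}}=\max\{\yoweight{X},\card(R)\}$, and since $R$ is finite or countable this maximum equals $\aleph_{0}$. Hence the space is separable.

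Combining these three properties with the uniqueness of the separable complete $\youfin(R)$-injective $R$-ultrametric space completes the argument. I do not anticipate a real obstacle here: the proof is a matter of confirming that the hypotheses on $X$ feed correctly into Theorem \ref{thm:contimetuniv} and Proposition \ref{prop:metricweight}. The one point requiring a moment's care is the extraction of an accumulation point, which rests only on the elementary fact that infinite compact Hausdorff spaces are non-discrete.
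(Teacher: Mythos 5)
Your proposal is correct and follows essentially the same route as the paper: the paper derives this proposition directly from Theorem \ref{thm:contimetuniv} (injectivity and completeness) together with Proposition \ref{prop:metricweight} (the weight computation giving separability). Your write-up merely makes explicit the routine verifications (accumulation point, $0$-dimensionality via Propositions \ref{prop:equivultmet} and \ref{prop:zerodim}, infinitude of $\yoweight{X}$) that the paper leaves implicit.
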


Due to the author's results on 
dense subsets of spaces of ultrametrics, 
we obtain the following theorem. 
Recall that $\yocantorc$ is  the 
Cantor set. 

\begin{thm}\label{thm:manyinj}
Let $R$ be a characteristic range set. 
Then 
the following statements are true:
\begin{enumerate}[label=\textup{(\arabic*)}]
\item\label{item:met}
If  $X$ is  an infinite  $0$-dimensional 
 compact metrizable space, 
then $(\ult{X}{S}, \umetdis_{X}^{R})$ is 
$\youfin(S)$-injective. 

\item\label{item:doubling}
If $X$ is  an infinite 
compact ultrametrizable space, 
then 
the set of all doubling ultrametrics in 
$\ult{X}{R}$ is 
$\youfin(R)$-injective. 

\item\label{item:non-doubling}
If $X$ is 
 an infinite
compact ultrametrizable space and 
$R$ is quasi-complete, 
then 
the set of all non-doubling ultrametrics in the space
$\ult{X}{R}$ is 
$\youfin(R)$-injective. 

\item\label{item:up}
If 
 $R$ is exponential, 
then the set of all uniformly perfect ultrametrics in 
$\ult{\yocantorc}{R}$ is 
$\youfin(R)$-injective. 

\item\label{item:nup}
The set of all non-uniformly perfect ultrametrics in 
$\ult{\yocantorc}{R}$ is $\youfin(R)$-injective.

\item\label{item:dimset}
If  $X$ is  an uncountable 
compact ultrametrizable space, 
$\yorbd{a}\in \yorlset$, 
and $R$ is 
$(\yocantorc, \yorbd{a})$-admissible, 
then 
$\yordimsetu{X}{\yorbd{a}}{S}$ is 
$\youfin(R)$-injective. 

\end{enumerate}
\end{thm}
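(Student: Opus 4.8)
The plan is to prove all six items by a single two-move pattern: realize each set as a \emph{dense} subset of an ultrametric space already known to be $\youfin(R)$-injective, and then invoke Lemma \ref{lem:denseuniv}, which transfers $\youfin(R)$-injectivity to dense subsets. All the required density statements have been assembled in Section \ref{sec:pre}, so the argument is organizational rather than computational.

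First I would establish item \ref{item:met}, which serves as the base case for everything else. When $X$ is an infinite $0$-dimensional compact metrizable space, it is Hausdorff, and being infinite and compact it possesses an accumulation point; hence Theorem \ref{thm:contimetuniv} applies and $(\yocmet{X}{R}, \umetdis_{X}^{R})$ is $\youfin(R)$-injective. Since $R$ is characteristic, Proposition \ref{prop:yocsetdense} shows that $\ult{X}{R}$ is dense in $\yocmet{X}{R}$, and Lemma \ref{lem:denseuniv} then yields the $\youfin(R)$-injectivity of $\ult{X}{R}$.

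For items \ref{item:doubling}--\ref{item:dimset}, I would first observe that each hypothesis on $X$ implies the hypotheses of item \ref{item:met}: an infinite compact ultrametrizable space is $0$-dimensional and metrizable by Proposition \ref{prop:equivultmet}, the Cantor set $\yocantorc$ is itself such a space, and an uncountable compact ultrametrizable space is in particular infinite. Thus in every case $\ult{X}{R}$ (respectively $\ult{\yocantorc}{R}$) is $\youfin(R)$-injective by item \ref{item:met}. It then remains only to cite the relevant density result for the prescribed subclass and apply Lemma \ref{lem:denseuniv} once more: Theorem \ref{thm:doublingdense} for \ref{item:doubling}, Theorem \ref{thm:non-ddense} for \ref{item:non-doubling} (whose quasi-completeness assumption matches the stated hypothesis, and whose required accumulation point is again supplied by infiniteness and compactness), the two parts of Theorem \ref{thm:updense} for \ref{item:up} and \ref{item:nup}, and Theorem \ref{thm:densefrac} for \ref{item:dimset}.

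I do not anticipate a genuine obstacle, since the deep work is already contained in Theorem \ref{thm:contimetuniv} and in the density theorems; the only care needed is bookkeeping of hypotheses, namely verifying in each item that the ambient space meets the requirements of item \ref{item:met} and that the cited density theorem's side conditions (characteristic, quasi-complete, exponential, or $(\yocantorc, \yorbd{a})$-admissible, as appropriate) coincide with those listed in the statement.
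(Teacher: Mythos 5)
Your proposal is correct and follows essentially the same route as the paper: establish item (1) by combining Theorem \ref{thm:contimetuniv} with the density of $\ult{X}{R}$ from Proposition \ref{prop:yocsetdense} and Lemma \ref{lem:denseuniv}, then reduce the remaining items to density statements inside $\ult{X}{R}$ via the same lemma, citing Theorems \ref{thm:doublingdense}, \ref{thm:non-ddense}, \ref{thm:updense}, and \ref{thm:densefrac}. The hypothesis bookkeeping you describe (accumulation points from infiniteness and compactness, $0$-dimensionality from ultrametrizability) matches the paper's argument.
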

\begin{proof}
According to 
Lemma \ref{lem:denseuniv}, 
it suffices to show 
all sets in the statements 
are dense in $\yocmet{X}{R}$. 
Remark that if $X$ is an infinte compact metrizable space, 
then it has an accumulation point. 
The statement 
\ref{item:met} is deduced 
 from 
Proposition 
\ref{prop:yocsetdense}, 
and 
Theorem \ref{thm:contimetuniv}. 
Using the statement \ref{item:met}, 
in what follows, 
we only need to  show the sets in the statements are dense in 
$\ult{X}{R}$. 
Therefore, 
The statements
\ref{item:doubling}, \ref{item:non-doubling}
follows from 
Proposition 
\ref{thm:doublingdense} and 
Theorem \ref{thm:non-ddense}, respectively. 
The statements  \ref{item:up} and \ref{item:nup} are proven by 
 Theorem  \ref{thm:updense}. 
The statement 
\ref{item:dimset} is guaranteed by 
Theorem \ref{thm:densefrac}. 
\end{proof}

\begin{rmk}
Even if  $X$ is countably infinite, 
with some suitable modifications, 
Theorem  \ref{thm:manyinj} and 
the statement \ref{item:dimset} in 
Theorem
\ref{thm:densefrac} 
are still
 true. 
This is based on the existence 
of a metric in 
$\yordimsetu{\yoopsp}{\rr_{\ge 0}}{\yorbd{a}(u, v)}$, 
for all $u, v\in [0, \infty]$ with $u\le v$,
where 
$\yorbd{a}(u, v)=(0, 0, u, v)$ and $\yoopsp$ is the one-point compactification of 
the countable discrete space. 
Since  details of the exsitence are slightly complicated, 
we omit the proof. 
\end{rmk}

\section{Additional remarks}\label{sec:add}
\label{subsec:topshape}

For an infinite cardinal $\kappa$, 
let $\yonbsp{\kappa}$ denote the 
countable product of the discrete space $\kappa$, which is sometimes called the 
\emph{$0$-dimensional Baire space of weight 
$\kappa$}. 
Notice that $\yonbsp{\aleph_{0}}$ is homeomorphic to 
the space of irrational numbers. 
The following is a topological characterization of 
$\yonbsp{\kappa}$
(see
\cite[Theorem 1]{MR152457}). 

\begin{thm}\label{thm:Bspaces}
Let $\kappa$ be an infinite cardinal and 
 $X$ be a completely ultrametrizable space. 
Assume that
\begin{enumerate}[label=\textup{(\arabic*)}]
\item\label{item:ch:w} 
the space $X$ has a dense subset of cardinal $\kappa$; 
\item\label{item:ch:op} 
 every non-empty open subset of $X$
 contains a closed discrete space of cardinal $\kappa$. 
\end{enumerate}
Then the space $X$ is homeomorphic to 
the
 $0$-dimensional
 Baire  space 
$\yonbsp{\kappa}$ of 
weight $\kappa$. 
\end{thm}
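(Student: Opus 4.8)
The plan is to realize $X$ as the branch space of the tree $\kappa^{<\omega}$ of finite sequences from $\kappa$, which is precisely a presentation of $\yonbsp{\kappa}$. Since $X$ is completely ultrametrizable, I would fix a complete ultrametric $d$ inducing its topology; replacing $d$ by $\min\{d,1\}$ we may assume $\yodiam_{d}(X)\le 1$. Hypothesis \ref{item:ch:w} together with Lemma \ref{lem:weightle} gives $\yoweight{X}\le \kappa$. The core of the argument is to construct a \emph{$\kappa$-branching Cantor scheme}: a family $\{U_{s}\}_{s\in \kappa^{<\omega}}$ of nonempty clopen subsets of $X$ with $U_{\langle\rangle}=X$, with $\{U_{s^{\frown}\alpha}\}_{\alpha<\kappa}$ a partition of $U_{s}$ into exactly $\kappa$ nonempty clopen pieces for every $s$, and with $\yodiam_{d}(U_{s})\le 2^{-|s|}$, where $|s|$ denotes the length of $s$.

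Granting such a scheme, I would define $\Phi\colon \yonbsp{\kappa}\to X$ by letting $\Phi(x)$ be the unique point of $\bigcap_{n\in\zz_{\ge0}}U_{x\restriction n}$. This intersection is a singleton: the sets $U_{x\restriction n}$ are nested, nonempty, and closed (clopen balls, by Lemma \ref{lem:ultraopcl}) with diameters tending to $0$, so any choice of points is Cauchy and converges, by completeness, to a point lying in every $U_{x\restriction n}$, while $\yodiam_{d}(U_{x\restriction n})\to 0$ forces uniqueness. Because each level $\{U_{s}:|s|=n\}$ partitions $X$, the map $\Phi$ is onto, and because distinct sequences are eventually separated into disjoint pieces of a common partition, $\Phi$ is injective. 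Finally $\Phi$ carries the basic clopen cylinder $[s]=\{x:x\restriction|s|=s\}$ onto $U_{s}$; since the cylinders form a clopen base of $\yonbsp{\kappa}$ and the $U_{s}$ form a clopen base of $X$ (they are clopen with $\yodiam_{d}(U_{s})\to 0$), both $\Phi$ and $\Phi^{-1}$ send basic open sets to basic open sets, so $\Phi$ is a homeomorphism.

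Everything then reduces to the following Splitting Lemma, which is the real content: \emph{every nonempty clopen $V\yosub X$ and every $\epsilon>0$ admit a partition of $V$ into exactly $\kappa$ nonempty clopen sets of $d$-diameter $\le \epsilon$.} I would prove it as follows. By hypothesis \ref{item:ch:op}, $V$ contains a closed discrete set $D=\{y_{\alpha}\}_{\alpha<\kappa}$ of cardinality $\kappa$. Since metric spaces are collectionwise normal and $X$ is zero-dimensional, the discrete family of singletons $\{\{y_{\alpha}\}\}$ can be expanded to a \emph{discrete} family of clopen sets $\{G_{\alpha}\}_{\alpha<\kappa}$ with $y_{\alpha}\in G_{\alpha}$; shrinking $G_{\alpha}$ to $G_{\alpha}\cap B(y_{\alpha},\epsilon;d)$ keeps the family discrete and clopen (Lemma \ref{lem:ultraopcl}) and forces $\yodiam_{d}(G_{\alpha})\le \epsilon$. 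A discrete family is automatically pairwise disjoint, and being a discrete family of closed sets, $G=\bigcup_{\alpha<\kappa}G_{\alpha}$ is closed, hence clopen, so the remainder $W=V\setminus G$ is clopen; I then partition $W$ into its closed $\epsilon$-balls, a clopen partition whose number of classes is at most $\yoweight{X}\le\kappa$ by Lemma \ref{lem:leweight} (distinct representatives are $\epsilon$-separated). Thus $\{G_{\alpha}\}_{\alpha<\kappa}$ together with the $\epsilon$-balls of $W$ is a clopen partition of $V$ into pieces of diameter $\le \epsilon$, of total cardinality $\kappa$. Applying this lemma recursively with $\epsilon=2^{-(|s|+1)}$ produces the scheme.

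The main obstacle is precisely the requirement that each node branch into \emph{exactly} $\kappa$ clopen pieces. For singular $\kappa$ one cannot expect a single fixed-radius ball partition of $V$ to have $\kappa$ classes, and naively carving out $\kappa$ pairwise disjoint isolating balls around $D$ leaves a remainder whose union need not be clopen. The device that overcomes both difficulties at once is collectionwise normality of metric spaces: it upgrades the merely closed discrete set $D$ to a \emph{discrete} clopen family, whose union is automatically clopen, so the leftover is clopen and can safely be refined; the lower bound (cardinality $\ge\kappa$) comes from $|D|=\kappa$, and the upper bound (cardinality $\le\kappa$) from the weight estimate. Once the Splitting Lemma is in hand, the construction of the scheme and the verification that $\Phi$ is a homeomorphism are routine.
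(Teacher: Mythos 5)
The paper does not prove this statement at all: it is quoted as a known characterization of the Baire space $\yonbsp{\kappa}$ with a citation to \cite[Theorem 1]{MR152457}, so there is no internal proof to compare against. Your argument is a correct, self-contained proof along the standard lines of that classical result: reduce everything to a splitting lemma, build a $\kappa$-branching scheme of clopen sets with shrinking diameters, and read off the homeomorphism from the induced bijection between cylinders and nodes. The bookkeeping is right --- hypothesis (1) gives $\yoweight{X}\le\kappa$ via Lemma \ref{lem:weightle}, hence at most $\kappa$ residual $\epsilon$-balls by Lemma \ref{lem:leweight}, while hypothesis (2) supplies the $\kappa$ isolated clopen pieces, so each node splits into exactly $\kappa$ parts; completeness gives well-definedness and the two clopen bases match up. The one step you state as a black box is the passage from ``metric spaces are collectionwise normal'' to a \emph{discrete} (not merely pairwise disjoint) clopen expansion of the closed discrete set $D$; this is a genuine standard fact (a discrete closed family in a normal, collectionwise normal space admits a discrete open expansion, obtained by shrinking a disjoint expansion inside a normality-separating open set), but it deserves a precise citation. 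In the ultrametric setting you can also bypass it entirely: taking $G_{\alpha}=B\bigl(y_{\alpha},\tfrac12\min\{\epsilon,\,d(y_{\alpha},D\setminus\{y_{\alpha}\})\}\bigr)\cap V$ yields, by the isosceles property (Lemma \ref{lem:isosceles}) and the nesting of balls, a pairwise disjoint clopen family whose union is automatically closed, which makes the splitting lemma elementary. With that point pinned down, the proof is complete and correct.
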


\begin{rmk}
A topological space $X$ satisfies the 
 conditions \ref{item:ch:w} and 
\ref{item:ch:op} in Theorem \ref{thm:Bspaces}
if and only if every  non-empty open subset $O$ of $X$
satisfies $\yoweight{O}=\kappa$. 
\end{rmk}

In 
\cite{Koshino1, Koshino2}, 
Koshino determined the 
topological types of the spaces
$\met(X)$ of metrics on 
a metrizable space $X$ generating the 
same topology of $X$ equipped with the uniform topology or the compact-open topology. 
We now  show a non-Archimedean analogue of 
Koshino's work. 
\begin{thm}\label{thm:topshape}
Let $\kappa$ be an infinite cardinal,  
$X$ be a $0$-dimensional compact Hausdorff space 
with $\yoweight{X}=\kappa$ possessing no isolated points, 
and 
$Y$ be  a $0$-dimensional compact Hausdorff space 
with $\yoweight{X}=\kappa$ possessing an 
accumulation point. 
Let  $R$ be a characteristic range set.
Put $\tau=\max\{\kappa, \card(R)\}$. 
If either of 
$\card(R)\le \kappa$ or 
$\card(R\cap[0, r])=\card(R)$ for all 
$r\in R\setminus \{0\}$,  
then $\yomaps{X}{R}$ and 
$\yocmet{X}{R}$ are 
homeomorphic to $\yonbsp{\tau}$. 
\end{thm}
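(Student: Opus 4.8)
The plan is to invoke the topological characterization of the Baire space in Theorem \ref{thm:Bspaces}, applied with the cardinal $\tau = \max\{\kappa, \card(R)\}$ in the role of $\kappa$. I would thereby reduce each assertion to two facts about the ambient ultrametric space, namely $(\yomaps{X}{R}, \yomaindis)$ in the first case and $(\yocmet{Y}{R}, \umetdis_{Y}^{R})$ in the second: that it is completely ultrametrizable, and that every non-empty open subset $O$ satisfies $\yoweight{O} = \tau$. By the remark following Theorem \ref{thm:Bspaces}, this last weight condition is precisely conditions (1) and (2) there, with $\kappa$ replaced by $\tau$. Complete ultrametrizability is immediate: $\yomaindis$ and $\umetdis_{Y}^{R}$ are genuine $R$-valued ultrametrics generating the respective topologies, and the spaces are complete by Corollary \ref{cor:comp00} and Proposition \ref{prop:spmetcomp}.

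For the weight condition I would localize to balls. Given a non-empty open set $O$ and a point $f \in O$, choose $\epsilon \in (0,\infty)$ with $U(f,\epsilon) \subseteq O$; since $R$ is characteristic, I can select $r \in R \setminus \{0\}$ with $r < \epsilon$, so that the closed ball $B(f, r)$ lies inside $O$. Weight is monotone under passage to subspaces, and $\yoweight{\yomaps{X}{R}} = \tau$ by Proposition \ref{prop:mapsweight} (respectively $\yoweight{\yocmet{Y}{R}} = \tau$ by Proposition \ref{prop:metricweight}), so I obtain the chain $\yoweight{B(f,r)} \le \yoweight{O} \le \tau$. It then suffices to show $\yoweight{B(f,r)} = \tau$, which is exactly the content of Proposition \ref{prop:funcballweight} (respectively Proposition \ref{prop:metballweight}). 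Feeding this back yields $\yoweight{O} = \tau$ for every non-empty open $O$, so both spaces meet the hypotheses of Theorem \ref{thm:Bspaces} and are homeomorphic to $\yonbsp{\tau}$.

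The hinge of the argument is verifying the hypotheses of Propositions \ref{prop:funcballweight} and \ref{prop:metballweight} at the chosen ball. The cardinality alternative ($\card(R) \le \kappa$ or $\card(R \cap [0,r]) = \card(R)$ for all $r \in R \setminus \{0\}$) is assumed outright, so the remaining point, and the one I expect to be the main obstacle, is the clopen-weight condition: every non-empty clopen subset $G$ of $X$ (and of $Y$) must satisfy $\yoweight{G} = \kappa$. This is a genuine requirement rather than a formality; it can fail for a $0$-dimensional compact Hausdorff space without isolated points whose clopen pieces carry different weights, as for a disjoint union of two Cantor cubes of distinct weights. Consequently it must be secured from the standing hypotheses on $X$ and $Y$ before the ball-weight propositions apply. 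I would make this condition explicit and dispatch it first; with it in hand, the two propositions deliver $\yoweight{B(f,r)} = \tau$ and the remainder of the argument goes through.

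Finally, the two conclusions are handled in parallel: the absence of isolated points in $X$ keeps $\yomaps{X}{R}$ locally non-trivial, while the accumulation point of $Y$ plays the identical role for $\yocmet{Y}{R}$. In each case the infinite weight $\tau$ of every non-empty open set already forbids isolated points, so condition (2) of Theorem \ref{thm:Bspaces} is satisfied automatically once the weight computation is in place, and no separate verification is needed.
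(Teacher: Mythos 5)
Your proposal takes the same route as the paper's own proof, which is just a two-line reduction: invoke Theorem \ref{thm:Bspaces} (via the subsequent remark that its two conditions amount to every non-empty open set having weight $\tau$), note complete ultrametrizability from Corollary \ref{cor:comp00} and Proposition \ref{prop:spmetcomp}, get the global weight from Propositions \ref{prop:mapsweight} and \ref{prop:metricweight}, and localize to closed balls via Propositions \ref{prop:funcballweight} and \ref{prop:metballweight}. Your localization step (shrink an open set to a closed ball $B(f,r)$ with $r\in R\setminus\{0\}$, using that $R$ is characteristic, and use monotonicity of weight under subspaces) is exactly what the paper leaves implicit.

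Where you go beyond the paper is in flagging the hypothesis of Propositions \ref{prop:funcballweight} and \ref{prop:metballweight} that every non-empty clopen subset $G$ of the base space satisfies $\yoweight{G}=\yoweight{X}$. You are right that this is a genuine requirement, that it is not among the hypotheses of Theorem \ref{thm:topshape}, and that it does not follow from them: for $X=\{0,1\}^{\aleph_{0}}\sqcup\{0,1\}^{\aleph_{1}}$ and $R$ countable, taking $f$ equal to a fixed $r'\in R$ with $r<r'$ on the large cube and $0$ on the small one, the ball $B(f,r)$ in $\yomaps{X}{R}$ consists of functions frozen on the large cube, hence has weight $\aleph_{0}<\tau=\aleph_{1}$, and $\yomaps{X}{R}$ cannot be homeomorphic to $\yonbsp{\aleph_{1}}$. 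So the clopen-weight condition must be added as a standing hypothesis (it is automatic when $\kappa=\aleph_{0}$ and $X$ has no isolated points, since every non-empty clopen subset of a Cantor set is again a Cantor set, and for homogeneous examples such as $\{0,1\}^{\kappa}$); it cannot be ``dispatched'' from the hypotheses as stated. The gap you identify is therefore in the paper's statement and proof, not in your reduction. A parallel caveat applies on the $\yocmet{Y}{R}$ side when $Y$ has isolated points (e.g.\ $Y=\yoopsp$), where the literal hypothesis of Proposition \ref{prop:metballweight} fails even though the conclusion can be rescued by working on the infinite member of the partition of $Y$ into $r$-balls. Modulo making the clopen-weight assumption explicit, your argument is complete and matches the intended proof.
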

\begin{proof}
Based on Theorem \ref{thm:Bspaces}, 
the case of $\yomaps{X}{R}$ is deduced from 
Propositions 
\ref{prop:mapsweight} and 
\ref{prop:funcballweight}.
Similarly, the case of 
$\yocmet{X}{R}$
 follows from 
Propositions \ref{prop:metricweight} and
\ref{prop:metballweight}. 
\end{proof}

\begin{cor}\label{cor:topshsh}
For every countable characteristic  range set $R$, 
the $R$-Urysohn universal ultrametric spaces is 
homeomorphic to the space of irrational numbers. 
\end{cor}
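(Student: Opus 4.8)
The plan is to exhibit a concrete isometric model of the $R$-Urysohn universal ultrametric space and then read off its topological type from Theorem \ref{thm:topshape}. Since $R$ is countable, Proposition \ref{prop:sepUryI} shows that $(\yomaps{\yocantorc}{R}, \yomaindis)$ is the (separable) $R$-Urysohn universal $R$-valued ultrametric space, where $\yocantorc$ denotes the Cantor set. Because the $R$-Urysohn universal ultrametric space is unique up to isometry (as recalled in the introduction), it suffices to determine the topological type of $\yomaps{\yocantorc}{R}$.

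Next I would verify that the hypotheses of Theorem \ref{thm:topshape} hold for the choice $X = \yocantorc$. The Cantor set is a $0$-dimensional compact Hausdorff space with $\yoweight{\yocantorc} = \aleph_{0}$ and without isolated points, so I take $\kappa = \aleph_{0}$. As $R$ is countable and characteristic, we have $\card(R) \le \aleph_{0} = \kappa$, so the hypothesis $\card(R) \le \kappa$ of Theorem \ref{thm:topshape} is satisfied and $\tau = \max\{\aleph_{0}, \card(R)\} = \aleph_{0}$. Theorem \ref{thm:topshape} then yields a homeomorphism between $\yomaps{\yocantorc}{R}$ and $\yonbsp{\aleph_{0}}$.

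Finally, since $\yonbsp{\aleph_{0}}$ is homeomorphic to the space of irrational numbers (as observed just after the definition of the Baire spaces $\yonbsp{\kappa}$ in this section), the chain of homeomorphisms shows that the $R$-Urysohn universal ultrametric space is homeomorphic to the space of irrational numbers.

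The argument is essentially bookkeeping: the real content is packaged in Proposition \ref{prop:sepUryI} (the concrete model) and Theorem \ref{thm:topshape} (the Baire-space recognition), so I do not expect a serious obstacle. The only point demanding care is confirming that the weight and cardinality conditions collapse to $\tau = \aleph_{0}$ in the countable setting, which is immediate once $\yocantorc$ is chosen as the underlying compact space; in particular one must check that $\yocantorc$ has no isolated points so that the first family of hypotheses of Theorem \ref{thm:topshape} applies.
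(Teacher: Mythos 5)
Your argument is correct and is exactly the deduction the paper intends: Corollary \ref{cor:topshsh} is stated as an immediate consequence of Theorem \ref{thm:topshape}, applied to the Cantor set $\yocantorc$ (via the concrete model from Proposition \ref{prop:sepUryI} and uniqueness of the separable $R$-Urysohn space up to isometry), with $\tau=\aleph_{0}$ and $\yonbsp{\aleph_{0}}$ identified with the irrationals. No gaps; your bookkeeping of the hypotheses (no isolated points, $\card(R)\le\kappa$) is precisely what needs checking.
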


\begin{rmk}
Theorem \ref{thm:topshape} is a
generalization of the argument in  the proof of 
\cite[Theorem 2]{MR1843595}. 
\end{rmk}

\begin{rmk}
Let $\kappa$, $R$, $X$ and $Y$ be the same objects in 
Theorem \ref{thm:topshape}. 
Of cause, if $\kappa=\aleph_{0}$ and $R$ is finite or countable, then $(\yomaps{X}{R}, \yomaindis)$ and 
$(\yocmet{X}{R}, \umetdis_{X}^{R})$ are isometric to each other (see \cite{MR2754373}). 
However, in general, 
the author does not know  they 
are isometric. 
In  \cite{Ishiki2023Ury}, 
the author proves  that if $\kappa=\aleph_{0}$ and 
$R$ is uncountable, the two spaces explained above and 
the non-Archimedean Gromov-Hausdorff associated with 
$R$ are isometric to each other. 
\end{rmk}
\begin{rmk}
For every infinite cardinal $\kappa$, 
there exists a $0$-dimensional compact Hausdorff space  $X$
with $\yoweight{X}=\kappa$ possessing  no isolated points. 
For example, the product 
$\{0, 1\}^{\kappa}$ is such a space. 
It also becomes an example of 
a $0$-dimensional compact Hausdorff space  $X$
with $\yoweight{X}=\kappa$ possessing  an accumulation point. 
\end{rmk}

\begin{ques}\label{ques:more}
For every  range set $R$, 
are there more  constructions of $\youfin(R)$-injective 
$R$-ultrametric spaces?
\end{ques}

\begin{ac}
The author would like to thank 
Tatsuya Goto for helpful comments. 
\end{ac}

\bibliographystyle{amsplain}
\bibliography{../../../bibtex/UU.bib}

\providecommand{\bysame}{\leavevmode\hbox to3em{\hrulefill}\thinspace}
\providecommand{\MR}{\relax\ifhmode\unskip\space\fi MR }
\providecommand{\MRhref}[2]{%
  \href{http://www.ams.org/mathscinet-getitem?mr=#1}{#2}
}
\providecommand{\href}[2]{#2}
\begin{thebibliography}{10}

\bibitem{MR1843595}
S.~A. Bogaty\u{\i}, \emph{A universal homogeneous ultrametric on the space
  irrational numbers}, Vestnik Moskov. Univ. Ser. I Mat. Mekh. (2000), no.~6,
  20--24, 86. \MR{1843595}

\bibitem{MR3782290}
A.~B. Comicheo and K.~Shamseddine, \emph{Summary on non-{A}rchimedean valued
  fields}, Advances in ultrametric analysis (A.~Escassut, C.~Perez-Garcia, and
  K.~Shamseddine, eds.), Contemp. Math., vol. 704, American Mathematical
  Society, Providence, RI, 2018, pp.~1--36. \MR{3782290}

\bibitem{MR3583613}
G.~Conant, \emph{Distance structures for generalized metric spaces}, Ann. Pure
  Appl. Logic \textbf{168} (2017), no.~3, 622--650. \MR{3583613}

\bibitem{MR3413493}
G.~Conant and C.~Terry, \emph{Model theoretic properties of the {U}rysohn
  sphere}, Ann. Pure Appl. Logic \textbf{167} (2016), no.~1, 49--72.
  \MR{3413493}

\bibitem{MR80905}
J.~de~Groot, \emph{Non-{A}rchimedean metrics in topology}, Proc. Amer. Math.
  Soc. \textbf{7} (1956), 948--953. \MR{80905}

\bibitem{MR2435142}
C.~Delhomm\'{e}, C.~Laflamme, M.~Pouzet, and N.~Sauer, \emph{Indivisible
  ultrametric spaces}, Topology Appl. \textbf{155} (2008), no.~14, 1462--1478.
  \MR{2435142}

\bibitem{MR2854677}
D.~Dordovskyi, O.~Dovgoshey, and E.~Petrov, \emph{Diameter and diametrical
  pairs of points in ultrametric spaces}, p-Adic Numbers Ultrametric Anal.
  Appl. \textbf{3} (2011), no.~4, 253--262. \MR{2854677}

\bibitem{falconer1997techniques}
K.~Falconer, \emph{{T}echniques in {F}ractal {G}eometry}, Wiley, Chichester,
  1997.

\bibitem{falconer2004fractal}
\bysame, \emph{Fractal {G}eometry: mathematical foundations and applications},
  3rd ed., John Wiley \& Sons, 2004.

\bibitem{fraser2020assouad}
J.~M. Fraser, \emph{{A}ssouad {D}imension and {F}ractal {G}eometry}, Tracts in
  Mathematics Series, vol. 222, Cambridge University Press, 2020.

\bibitem{MR2754373}
S.~Gao and C.~Shao, \emph{Polish ultrametric {U}rysohn spaces and their
  isometry groups}, Topology Appl. \textbf{158} (2011), no.~3, 492--508.
  \MR{2754373}

\bibitem{MR2435145}
M.~Hu\v{s}ek, \emph{Urysohn universal space, its development and {H}ausdorff's
  approach}, Topology Appl. \textbf{155} (2008), no.~14, 1493--1501.
  \MR{2435145}

\bibitem{Ishiki2021ultra}
Y.~Ishiki, \emph{An embedding, an extension, and an interpolation of
  ultrametrics}, $p$-Adic Numbers Ultrametric Anal. Appl. \textbf{13} (2021),
  no.~2, 117--147.

\bibitem{Ishiki2023fractalin}
\bysame, \emph{Fractal dimensions in the gromov--hausdorff space},  (2021),
  preprint arXiv:2110.01881.

\bibitem{Ishiki2021dense}
\bysame, \emph{On dense subsets in spaces of metrics},  (2021), preprint
  arXiv:2104.12450, to apper in Colloq. Math.

\bibitem{Ishiki2022factor}
\bysame, \emph{A factorization of metric spaces},  (2022), preprint
  arXiv:2212.13409.

\bibitem{Ishiki2022highpower}
\bysame, \emph{Simultaneous extensions of metrics and ultrametrics of high
  power},  (2022), preprint arXiv:2206.10778.

\bibitem{MR4527953}
\bysame, \emph{Extending proper metrics}, Topology Appl. \textbf{325} (2023),
  Paper No. 108387. \MR{4527953}

\bibitem{Ishiki2023disco}
\bysame, \emph{On comeager sets of metrics whose ranges are disconnected},
  Topology Appl. \textbf{327} (2023), Paper No. 108442.

\bibitem{Ishiki2023Ury}
\bysame, \emph{Uniqueness and homogeneity of non-separable {U}rysohn universal
  ultrametric spaces},  (2023), arXiv:2302.00306.

\bibitem{MR952617}
M.~Kat\v{e}tov, \emph{On universal metric spaces}, General topology and its
  relations to modern analysis and algebra, {VI} ({P}rague, 1986), Res. Exp.
  Math., vol.~16, Heldermann, Berlin, 1988, pp.~323--330. \MR{952617}

\bibitem{MR1321597}
A.~S. Kechris, \emph{Classical descriptive set theory}, Graduate Texts in
  Mathematics, vol. 156, Springer-Verlag, New York, 1995. \MR{1321597}

\bibitem{Koshino1}
K.~Koshino, \emph{The topological classification of spaces of metrics with the
  uniform convergence topology},  (2021), preprint arXiv:2112.07237.

\bibitem{Koshino2}
\bysame, \emph{The topological type of spaces consisting of certain metrics on
  locally compact metrizable spaces with the compact-open topology},  (2022),
  preprint arXiv:2202.08615.

\bibitem{MR2435148}
J.~Melleray, \emph{Some geometric and dynamical properties of the {U}rysohn
  space}, Topology Appl. \textbf{155} (2008), no.~14, 1531--1560. \MR{2435148}

\bibitem{MR4462868}
F.~M\'{e}moli and Z.~Wan, \emph{On {$p$}-metric spaces and the
  {$p$}-{G}romov-{H}ausdorff distance}, p-Adic Numbers Ultrametric Anal. Appl.
  \textbf{14} (2022), no.~3, 173--223. \MR{4462868}

\bibitem{MR3728284}
J.~R. Munkres, \emph{Topology}, 2nd ed., Pearson modern classic, Pearson, New
  York, 2018, Originally published in 2000 [2018 reissue].

\bibitem{MR2667917}
L.~Nguyen Van~Th\'{e}, \emph{Structural {R}amsey theory of metric spaces and
  topological dynamics of isometry groups}, Mem. Amer. Math. Soc. \textbf{206}
  (2010), no.~968, 155 pages. \MR{2667917}

\bibitem{MR0394604}
A.~R. Pears, \emph{{D}imension {T}heory of {G}eneral {S}paces}, Cambridge
  University Press, Cambridge, England-New York-Melbourne, 1975. \MR{0394604}

\bibitem{MR1900705}
V.~Pestov, \emph{{R}amsey-{M}ilman phenomenon, {U}rysohn metric spaces, and
  extremely amenable groups}, Israel J. Math. \textbf{127} (2002), 317--357.
  \MR{1900705}

\bibitem{MR2277969}
\bysame, \emph{Dynamics of {I}nfinite-dimensional {G}roups}, University Lecture
  Series, vol.~40, American Mathematical Society, Providence, RI, 2006.
  \MR{2277969}

\bibitem{MR350705}
D.~D. Rothmann, \emph{A nearly discrete metric}, Amer. Math. Monthly
  \textbf{81} (1974), 1018--1019. \MR{350705}

\bibitem{MR2444734}
W.~H. Schikhof, \emph{{U}ltrametric {C}alculus}, Cambridge Studies in Advanced
  Mathematics, vol.~4, Cambridge University Press, Cambridge, 2006, Reprint of
  the 1984 original [MR0791759]. \MR{2444734}

\bibitem{MR152457}
A.~H. Stone, \emph{Non-separable {B}orel sets}, Rozprawy Mat. \textbf{28}
  (1962), 41. \MR{152457}

\bibitem{Ury1927}
P.~Urysohn, \emph{Sur un espace m\'etrique universel}, Bull. Sci. Math.
  \textbf{51} (1927), 43--64 and 74--90.

\bibitem{MR2051102}
V.~Uspenskij, \emph{The {U}rysohn universal metric space is homeomorphic to a
  {H}ilbert space}, Topology Appl. \textbf{139} (2004), no.~1-3, 145--149.
  \MR{2051102}

\bibitem{MR1354831}
I.~A. Vestfrid, \emph{On a universal ultrametric space}, Ukra\"{\i}n. Mat. Zh.
  \textbf{46} (1994), no.~12, 1700--1706. \MR{1354831}

\bibitem{MR4282005}
Z.~Wan, \emph{A novel construction of {U}rysohn universal ultrametric space via
  the {G}romov-{H}ausdorff ultrametric}, Topology Appl. \textbf{300} (2021),
  Paper No. 107759. \MR{4282005}

\end{thebibliography}

\end{document}